\font\twelvecyr=wncyr10 at 12pt
\def\cyrlar{\twelvecyr\cyracc}  
\def\ya/{\'\char31}
\def\Ya/{\'\char23}
\def\yu/{\'\char24}
\def\Yu/{\'\char16}
\let\inter=\bigcap
\tikzset{my arrow/.style={
  blue!60!black,
  -latex
  }
}
\definecolor {processblue}{cmyk}{0.96,0,0,0}
\newcommand{\R}{\mathbb{R}} 
\newcommand{\N}{\mathbb{N}} 
\theoremstyle{plain} 
\newtheorem{theorem}{Theorem} 
\newtheorem*{theorem*}{Theorem} 
\newtheorem*{prop*}{Proposition} 
\newtheorem{lemma}[theorem]{Lemma} 
\newtheorem*{lemma*}{Lemma} 
\newtheorem{corollary}[theorem]{Corollary}
\newtheorem{example}{Example}  
\begin{document}

\begin{titlepage}
    \begin{center}
        \vspace*{1cm}
        
        \textbf{Sharkovsky Theorem}
        
        \vspace{0.5cm}

        \vspace{1.5cm}
        
        \textbf{Veniamin L. Smirnov, Juan J. Tolosa}
        
        \vfill

        \vspace{0.8cm}
        
        \includegraphics[width=0.4\textwidth]{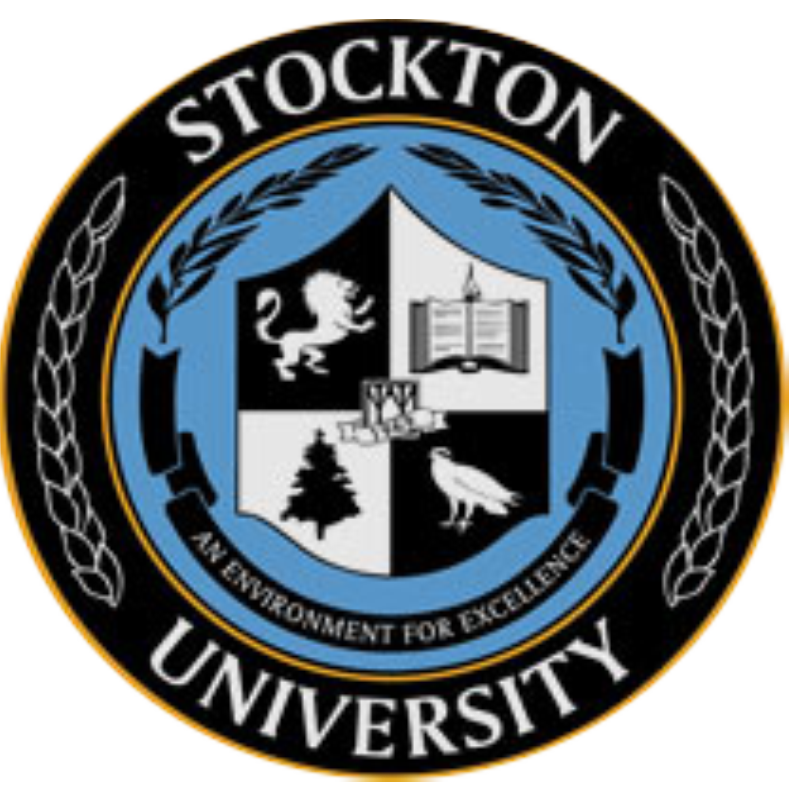}
        
        School of Natural Sciences and Mathematics \\
        Stockton University \\
        The United States of America \\
        June, 2016
        
    \end{center}
\end{titlepage}

\tableofcontents
\newpage

\vspace{12pt}

\begin{abstract}
   We reconstruct the original proof of Sharkovsky Theorem. Specifically, we supply 
   detailed 
    examples and proofs of Lemmata. This version of the proof does not require any 
    sophisticated background from the reader: some experience in Calculus and set theory suffice to comprehend the beauty of the Theorem.
\end{abstract}

\section{Introduction}

Mathematics has a major impact on many branches of human activities. The modern state of science is dedicated to studying complicated processes and phenomena of many kinds. Theories of mathematics are applied in many other sciences, at the first sight, having nothing in common with the Queen of Sciences, such as Law Science
 and Language Studies. 

Mathematics itself in its development has given life to many fields, such as the Theory of Dynamical Systems.

This theory came out from Celestial Mechanics. Its founders were Isaac Newton, Joseph-Louis Lagrange, Pierre-Simon Laplace, William Rowan Hamilton, and Henri Poincar\'e, among others. 

The analysis of the three-body-problem in celestial mechanics, 
stated in 1885, 
led to the birth of  chaotic dynamics and chaos itself, a term that would be developed almost a century later. 
That was the first stage of the modern history of dynamical systems.

The second stage was based on statistical physics and ergodic theory.

Later, with the appearance of micro electronics and  radio electronics, the theory of dynamical systems  entered  the third stage.

In the 20th century there were two main centers of study of  dynamical systems: the USA and the USSR, two countries separated by the Iron Curtain. This led to a parallel development of dynamical systems theory. 

In particular, for the case of one-dimensional dynamics, 
despite the availability of translated papers written by A.N. Sharkovsky on the existence of certain order of natural numbers that explains coexistence of periods, they remained unknown in the USA. 
In 1975 James Yorke and his graduate student Tien-Yien Li,
in a paper called \textit{Period Three Implies Chaos} \cite{Yorke}, proved the following remarkable result: 
if a continuous map of a closed, bounded interval of the line into itself has a period-three orbit, then it also has orbits of every period.  
This, however, was just a particular case of the much more general Sharkovsky Theorem.  
Truth be told, period three is the easiest case, as the reader will notice reading through this paper. 
However, the period-three case already reveals the beauty of one-dimensional dynamical systems.

Let us start by showing a concrete example of a function having a relatively simple orbit of period three; 
we will also clarify the meaning of ``period-three orbit.''

\begin{mdframed}
\begin{example}
\label{period3}

\vspace{12pt}
%
Let us consider the following quadratic function:

\begin{eqnarray*}
f(x)=-\frac{3}{2}x^2+\frac{11}{2}x-2,
\end{eqnarray*}
 whose graph is depicted below.

We say that a value $x$ is a periodic point of period $n$ if the orbit of $x$ under iterations by $f$,
that is, the set $\{x, f(x), f(f(x)), \ldots \}$ has exactly $n$ points.
We also say that this set is a periodic orbit of period $n$, or an $n$-cycle.  
Equivalently, $f^n(x)=x$, but $f^k(x)\ne x$ for $0<x<n$ (note that $f^k$ means $f \circ f \circ \cdots \circ f$, $k$ times). 

{\centering
      \includegraphics[width=10cm,height=7.5cm]{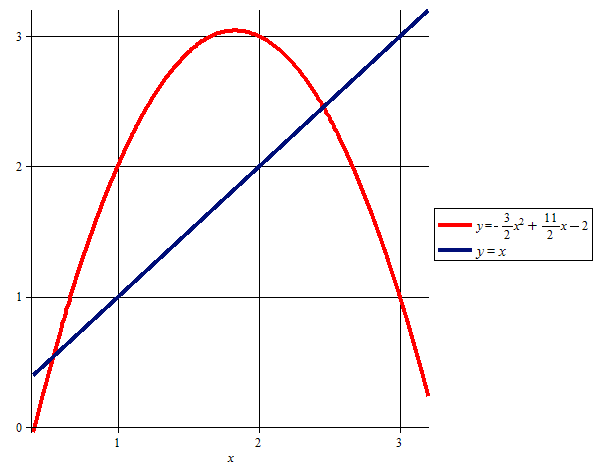}
      \par}
      
This function satisfies
\begin{eqnarray*}
f(1) &=& 2 ,\\
f(2) &=& 3, \\
f(3) &=& 1.
\end{eqnarray*}      
Therefore, the iterations of this function, starting with $1$, produce the following sequence:
\begin{eqnarray*}
1,2,3,1,2,3,1,2, \ldots
\end{eqnarray*}      
This means that 
$x=1$ is a point of period three. Now the motivating question: does the function $f$ have any other periodic points? If so, of what periods?

For very small values of $n$ it may be possible to use algebra to answer this question, 
but as $n$ grows, algebra fails quickly: 
notice that solving $f^k(x)=x$ requires finding the roots of a polynomial of degree $2^k$.

As it turns out,
this function will have cycles {\it of all periods}!

\end{example}     
\end{mdframed}

This rather surprising result will be the topic of Section
\ref{period3section}. 

This theorem amazed the mathematical community. In fact, Li and Yorke's paper was responsible for introducing the word ``chaos'' into the mathematical vocabulary. (Now there are several competing definitions of chaos.)

One-dimensional dynamical systems, 
induced by a continuous map of an interval $I$ into itself, 
have an important property: the points of a particular trajectory provide plenty  information about 
the dynamical system itself.  
This is a characteristic of the one-dimensional  space---an interval $I$. 

The existence of certain periods leads to coexistence of other periods.  This feature was observed by the Ukrainian mathematician Aleksandr Nikolaevich Sharkovsky.

In 1964, he introduced  the following ordering on the positive integers,which we will denote as  $n \triangleright m$ \cite{Shark1}:

\begin{eqnarray*}
&3 \triangleright 5 \triangleright 7 \triangleright 9 \triangleright \ldots \\
&2 \cdot 3 \triangleright 2 \cdot 5 \triangleright 2 \cdot 7 \triangleright 2 \cdot 9 \triangleright \ldots \\
&2^2 \cdot 3 \triangleright 2^2 \cdot 5 \triangleright 2^2 \cdot 7 \triangleright 2^2 \cdot 9 \triangleright \ldots \\
&\ldots\\
&\ldots 2^5 \triangleright 2^4 \triangleright 2^3 \triangleright 2^2 \triangleright 2 \triangleright 1 .
\end{eqnarray*}

First come the odd numbers (except one), 
then the doubles of the odd numbers, then $2^2$ times each odd number, etc. When all of these values are exhausted, the ordering ends with the decreasing powers of $2$ (including $2^0 = 1$).

Observe that the Sharkovsky ordering includes all natural 
numbers ordered in a special way. 
Indeed, 
any positive integer that is not a power of $2$ can be written as $2^l \cdot m$, where $m$ is an odd number,   
$m>1$ , and $l \geq 0$ (\cite{Don}, p.~47).

Sharkovsky Theorem states 
that if a continuous function from $I$ to $I$ has
a cycle of a given period $n$, then it also must have a cycle of period $m$,
for every $m$ satisfying $n \triangleright m$ in the above ordering.

Since $3$ is the first number of the Sharkovsky ordering, by this Theorem, a cycle of period $3$ implies the existence of cycles of periods of all integers after $3$ in the order above. 
Hence, ``period three implies all periods" follows directly from this Theorem. 
However, besides $3$, we get information of what happens 
for all other natural numbers!

This Theorem helps us understand ``the world" of one-dimension dynamical systems. 

Besides the original proof, done by A.N. Sharkovsky, there exist several other proofs,
which have been completed using  various sets of ``mathematical tools" \cite{BGMY}, \cite{BurnsHass}, \cite{48}, \cite{75}, \cite{84}, \cite{104}, \cite{110}, \cite{113}.

However, the original proof is distinct from the others by its structure. 
This proof not only reflects the path which was followed by Sharkovsky in order to come to his conclusions,
but also contains multiple auxiliary results that have interest on their own.

All publications of the proof of Sharkovsky Theorem (including the original one, which is not widely accessible, is very compact, and has some typographical errors) have targeted experienced mathematicians with sufficient background on Dynamical Systems and neighboring branches of mathematics. 

Our aim is to provide a proof that even students with some experience in Calculus I and set theory with a grain of interest could follow, and thus understand the beauty of the Theorem. 
For this reason, we have striven to illustrate all the concepts with multiple examples, pictures, and graphs.

\vspace{12pt}

\bigskip

\section{A brief biography}
\label{history}

Aleksandr Nikolaevich Sharkovsky was born on 7 December 1936 in Kiev. 

In 1953 he was accepted as a student of the Physico-mathematical School of the Kiev University named after T.G. Shevchenko. During the first year Aleksandr Nikolaevich attended a Higher Algebra coterie hosted by 
professor G. E. Shilov, 
where A. N. Sharkovsky delivered a series of talks  
on iterating processes. 
One of his topics was ``The conditions of convergence of the iterating sequences $f^n(x)$ to a fixed point.''

\begin{wrapfigure}{R}{0.3\textwidth}
  \begin{center}
    \includegraphics[width=0.25\textwidth]{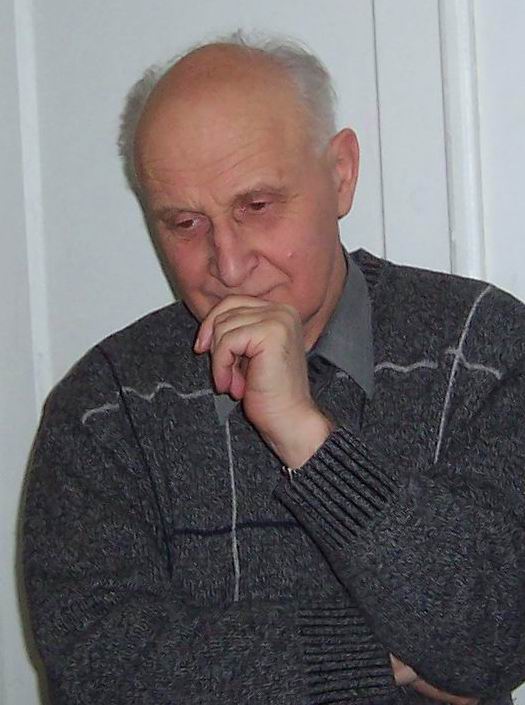}
  \end{center}
  \caption{A.N. Sharkovsky}
\end{wrapfigure}

During a 2-year course on Real Analysis, the analysis of so-called bad functions was presented, including their sequences and sums. The idea of considering only static conditions of bad functions without considering their dynamics left him with a feeling of dissatisfaction. The area of interest was chosen.

In the year 1958,in order to graduate, Aleksandr Nikolaevich had to write his Senior Thesis, 
an integral part of the educational process in the Soviet Union.  
Being a good student, he was asked to work on either the theory of probability, the theory of functions of complex variables, or functional analysis. All his attempts failed.

During his 5th year in the University, he did internship working with ECM (Electronic Computing Machines). His duties included writing and developing programs for the ECM. He came out with some ideas on how to improve some processes performed by ECM. Later, when the deadline for the senior thesis got closer, he decided to implement his experience with ECM in a paper 
and chose the address programming as the topic for the thesis.  However, this area of research was not approved by the faculty. And since time to make a decision was almost over, Aleksandr Nikolaevich decided to study iterating processes. 
He also got a feeling that he could come up with some interesting ideas. The topic for the senior thesis was picked.  

His thesis was said to be distinct. 

Naturally, the desire to continue with iterations followed Aleksandr Nikolaevich during his post-graduate studies. However, this idea was not widely spread among mathematicians and it led to 
problems with a thesis advisor. 
Finally, 
N. N. Bogolyubov, who lived in Moscow and studied similar areas of mathematics, became his advisor. 
However, because of the distance, Aleksandr Nikolaevich never met him. All discussions about the thesis happened to be with Yu.~A. Mitropolsky, whose name appeared on the thesis as advisor. 

Half a year before graduating from Post Graduate School in 1961, the thesis ``Some questions of the theory of one-dimensional iterating processes'' was presented. He showed that if a continuous map has a cycle of period $2^m$, then it has cycles of periods $2^i, i=0,1,2,\ldots, m-1$. 
Therefore, back on those days he had already found a piece of his order:

$$\cdots 2^5 \triangleright 2^4 \triangleright 2^3 \triangleright 2^2 \triangleright 2 \triangleright 1.$$

 That paper became a foundation for the next thesis and the Theorem itself. 
 
 The work on iterating processes went on.
 
 In March 1962 Aleksandr Nikolaevich published a paper called ``Coexistence of Cycles of a Continuous Mapping of the Line into Itself" that contained the Theorem and complete proof \cite{Shark3}, \cite{Shark1}.
 
 Nowadays, Aleksandr Nikolaevich works for The Institute of Mathematics of NAS of Ukraine%
 \footnote{The Institute of Mathematics of National Academy of Science of Ukraine. 01601 Ukraine, Kiev-4, Tereschenkivska str., 3,  \url{http://www.imath.kiev.ua/}.}
 (since July 1, 1961); he is the head of the department of the Theory of Dynamical Systems.

 \section{Preliminary Definitions and Notation}
 \label{defin}

Let $f$ be a continuous map
$f \colon I \to \mathbb R$, with $I \subset \mathbb R$. 
Given sets $A \subseteq I$ and $B \subseteq \mathbb R$, the notation
$A \to B$ will mean ``A covers B'' (under $f$), that is, $f(A) \supset B$.
\footnote{The symbols $\subset$ and $\supset$ will be used to denote
non-strict inclusions, what is usually denoted by $\subseteq$ and $\supseteq$
respectively.}
The given continuous function $f$ should be clear from the context.

We denote iterations of a function with itself by a superscript, so that
\begin{eqnarray*}
f^0(x_0) &=& x_0 ,\\
f^1(x_0) &=& f(x_0) ,\\
f^2(x_0) &=& f(f(x_0)) ,\\
\ldots  \\
f^m(x_0) &=& \underbrace{f (f\cdots(f}_{\text{m-times}}(x_0))\cdots).
\end{eqnarray*}
The set of interior points of an interval $I$ is denoted by  Int$(I)$.

A point $x_0$ is said to be of {\it period $m$}, if 
\begin{eqnarray*}
f^m(x_0)=x_0.
\end{eqnarray*}
The least positive $m$ for which $f^m(x_0)=x_0$ is called the 
{\it prime period}, or {\it the least period\/} of $x_0$.
In particular, we can say that a fixed point is a point of period $1$.

A {\it cycle of period $m$\/} 
is the set of points consisting of a point $x_0$ of period $m$ together with its iterates, that is, 
\begin{eqnarray*}
\{x_0, x_1, \ldots, x_{m-1}\},
\end{eqnarray*} 
where $x_i=f(x_{i-1})$ for 
$i=1,2,\ldots m-1$  
and $f(x_{m-1})=x_0$.

\section{Two fixed-point theorems}
\label{fixedPts}

\bigskip

In this section, we will state and prove two important fixed-point theorems, which will be widely used in our work. 
These theorems have also independent interest.

\begin{theorem}
\label{fixed1}
Let $I$ be a closed bounded interval $[a,b] \subset \mathbb R$, 
and let $f: I \to I$ be continuous on $I$. 
Then there exists a fixed point $\gamma \in I$ of $f$, that is, $f(\gamma)=\gamma$.
\end{theorem}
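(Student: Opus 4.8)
The plan is to prove this classical intermediate-value-type fixed point theorem by reducing it to the Intermediate Value Theorem from Calculus. First I would introduce the auxiliary function $g\colon [a,b]\to\mathbb R$ defined by $g(x)=f(x)-x$. Since $f$ is continuous on $[a,b]$ and the identity map is continuous, $g$ is continuous on $[a,b]$. A fixed point of $f$ is precisely a zero of $g$, so it suffices to show that $g$ vanishes somewhere on $[a,b]$.

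Next I would examine the values of $g$ at the endpoints. Because $f$ maps $I$ into $I=[a,b]$, we have $a\le f(a)\le b$ and $a\le f(b)\le b$. Hence $g(a)=f(a)-a\ge 0$ and $g(b)=f(b)-b\le 0$. If either endpoint already gives equality, that endpoint is the desired fixed point $\gamma$ and we are done. Otherwise $g(a)>0$ and $g(b)<0$, so $g$ takes a positive value and a negative value on the interval $[a,b]$.

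Finally I would invoke the Intermediate Value Theorem: since $g$ is continuous on $[a,b]$ and $g(b)<0<g(a)$, there exists $\gamma$ strictly between $a$ and $b$ with $g(\gamma)=0$, that is, $f(\gamma)=\gamma$. This completes the argument. There is really no serious obstacle here; the only thing to be careful about is making sure the hypothesis $f(I)\subset I$ is used correctly to pin down the signs of $g$ at the endpoints, and to handle the boundary cases where a fixed point occurs at $a$ or $b$ so that the strict inequalities needed for IVT are legitimate.
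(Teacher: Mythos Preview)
Your proof is correct and follows essentially the same approach as the paper: define $g(x)=f(x)-x$, use $f(I)\subset I$ to get $g(a)\ge 0$ and $g(b)\le 0$, dispose of the endpoint-equality cases, and apply the Intermediate Value Theorem to obtain $\gamma$ with $g(\gamma)=0$.
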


\begin{proof}
Let $g(x)=f(x)-x$, which is continuous, as a difference of two continuous functions.

\begin{figure}[ht!]
\centering
\includegraphics[width=70mm]{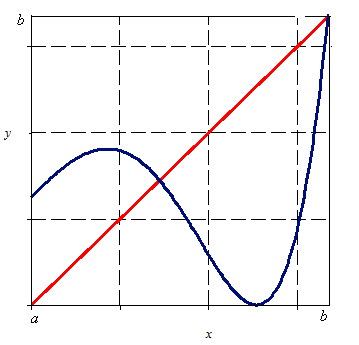}
\caption{}
\label{fixed_int}
\end{figure}
Since   $f: I \to I$ (see Figure \ref{fixed_int}), we get the following possibilities at
each endpoint:

$$f(a) = a, \quad\hbox{ or}\quad f(a)>a;  $$
$$f(b) = b, \quad\hbox{ or}\quad f(b)<b . $$
If either $f(a)=a$ or $f(b)=b$, then we have a fixed point and the proposition follows. 

If $f(a)>a$ and $f(b)<b$, then 
$$g(a)=f(a)-a>0, $$
$$g(b)=f(b)-b <0.$$
Since $g$ is a continuous function on $[a,b]$, 
by the Intermediate Value Theorem (Bolzano's theorem), 
there must exist a point $\gamma \in I$, such that $g(\gamma)=0$, which implies

$$f(\gamma)=\gamma.$$

This completes the proof.

\end{proof}

\medskip

The following is our second fixed-point result. 
Here, again, $I$ is a closed bounded interval of $\Bbb R$.

\begin{theorem}
{\label{fixed2}
Let $f: I \to \mathbb R$ be a continuous function, and assume that
$ I \subset f(I)$, so that $I$ covers itself under $f$. 
Then  there exists at least one fixed point $\gamma \in I$ of $f$,
that is,  $f(\gamma)=\gamma$.}
\end{theorem}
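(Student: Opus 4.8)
The plan is to reduce this to Theorem \ref{fixed1}'s technique---namely, to apply the Intermediate Value Theorem to the auxiliary function $g(x) = f(x) - x$---but now the hypothesis $I \subset f(I)$ replaces the hypothesis $f(I) \subset I$, so the sign information at the endpoints of $I$ is no longer available. Instead, I would extract that sign information from two points \emph{inside} $I$ whose images are the endpoints $a$ and $b$.

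Concretely, write $I = [a,b]$ (the case $a = b$ being trivial). Since $a \in I$ and $I \subset f(I)$, there is a point $c \in I$ with $f(c) = a$; similarly there is a point $d \in I$ with $f(d) = b$. Now evaluate $g(x) = f(x)-x$, which is continuous, at these two points: because $c \in [a,b]$ we have $c \ge a$, so
\[
g(c) = f(c) - c = a - c \le 0,
\]
and because $d \in [a,b]$ we have $d \le b$, so
\[
g(d) = f(d) - d = b - d \ge 0.
\]
If either inequality is an equality, then $c$ (resp.\ $d$) is already a fixed point and we are done. Otherwise $g(c) < 0 < g(d)$.

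In the remaining case, I would apply the Intermediate Value Theorem to $g$ on the closed interval $J$ whose endpoints are $c$ and $d$. The key point making this legitimate is that $J \subseteq I$: both $c$ and $d$ lie in the interval $I$, so the whole segment between them does as well, and hence $g$ is continuous on $J$. Since $g$ takes a negative value at one endpoint of $J$ and a positive value at the other, Bolzano's theorem yields a point $\gamma \in J \subseteq I$ with $g(\gamma) = 0$, i.e.\ $f(\gamma) = \gamma$, as desired.

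The only real subtlety---the ``main obstacle''---is the bookkeeping that turns the covering hypothesis into usable endpoint data: one must notice that $I \subset f(I)$ guarantees preimages of $a$ and $b$ lying \emph{within} $I$, and then observe that membership of those preimages in $[a,b]$ is exactly what pins down the signs of $g$ there. Everything else (continuity of $g$, the degenerate equality cases, and the fact that the segment joining the two preimages stays inside $I$) is routine.
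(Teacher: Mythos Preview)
Your proof is correct and follows essentially the same approach as the paper's: both arguments find two points in $I$ at which $g(x)=f(x)-x$ has opposite signs and then apply the Intermediate Value Theorem on the segment between them. The only minor difference is the choice of those two points---the paper uses the points where $f$ attains its minimum and maximum on $I$ (via the Extreme Value Theorem), whereas you use preimages of the endpoints $a$ and $b$; your choice is slightly more economical since it sidesteps the EVT entirely.
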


\begin{proof}

Let $I$ be the closed interval $[a,b]$.
Since $f$ is continuous, by Weierstrass's theorem (the Extreme Value Theorem) 
and the Intermediate Value theorem,
we know that $f(I)$ is another closed and bounded interval. Let us denote it by 
$f(I) =[\alpha, \beta]$.
Since $f$ attains its maximum and minimum values on $I$, 
 then there exist at least two points $a', b' \in I$, so that 
(see Figure \ref{fixTwo})
$$f(a')= \min_{[a,b]} f=\alpha \leq a \leq a' \leq b \leq \beta,$$
$$f(b')=\max_{[a,b]} f= \beta \geq b \geq b' \geq a \geq \alpha.$$
\begin{figure}[ht!]
\centering
\includegraphics[width=90mm]{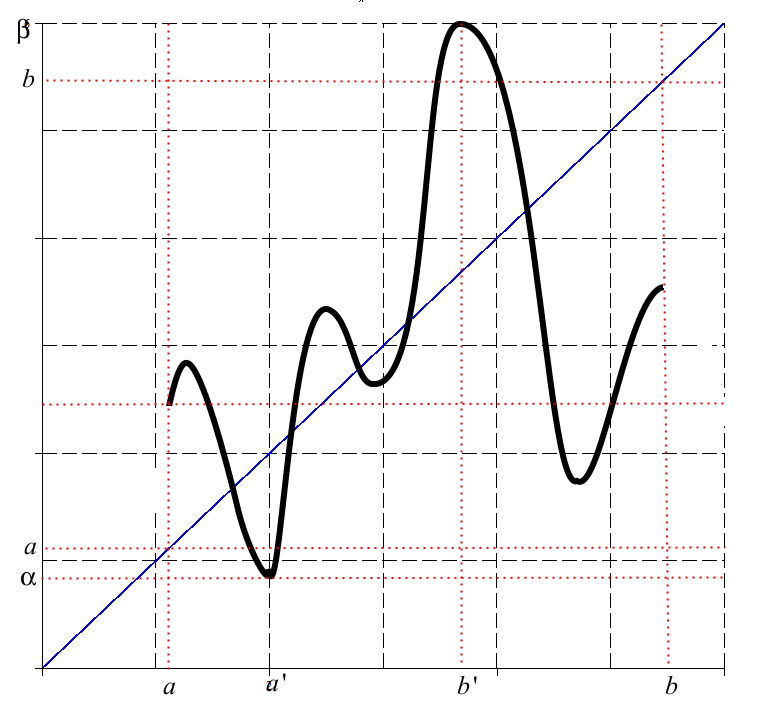}
\caption{}
\label{fixTwo}
\end{figure}

As in the proof of Theorem \ref{fixed1}, one can find two situations at either $a'$ or $b'$:

\begin{eqnarray*}
\alpha = f(a')=a' \quad \text{or}  \quad f(a')<a' ;  \\
\beta = f(b')=b' \; \quad \text{or}  \quad  f(b')>b'. \\
\end{eqnarray*}

If either $f(a')=a'$ or $f(b')=b'$, we can pick $\gamma=a'$ or $\gamma=b'$ and the proof is complete.

Consider the second possibility:

$$f(a')< a' \qquad\hbox{ and}\qquad f(b') > b'.$$

Let $g$ be the continuous function defined as $g(x)=f(x)-x$, so that

$$g(a')=f(a')-a' <0,$$
$$g(b')=f(b')-b' >0.$$
Therefore, 
$g$ changes its sign on the interval with endpoints $a',b'$
By the Intermediate Value Theorem, there must exist a point 
$\gamma$ in that interval, at which $g(\gamma)=0$. 
Since this interval is contained in $[a, b]$, we conclude that
there is a $\gamma \in [a,b]$, such that
$$g(\gamma)=0 , $$
which implies

$$f(\gamma)=\gamma.$$

This result completes the proof.
\end{proof}

\smallskip
{\bf Note.} While Theorem \ref{fixed1} is also valid in higher dimensions 
(for a map of the $n$-dimensional cube into itself, this is the famous Brouwer fixed-point theorem),
it is interesting to observe that Theorem \ref{fixed2} already fails in dimension two. 
This is shown by a nice example by Nik Weaver 
(\url{http://mathoverflow.net/questions/211459}).

\section{Itinerary Lemma}
In this section, we study the connection between 
a sequence of intervals covering each other, and forming a closed graph, 
and the existence  of  periodic points.

Given a continuous function $f \colon \Bbb R \to \Bbb R$, and sets $A, B \subset \Bbb R$,
we defined $A \rightarrow B$ in Section \ref{defin},  
to mean that 
$A$ covers $B$ under  $f$, that is, 
$f(A) \supset B$.

Further, if $B \rightarrow C$, then, similarly, $f(B) \supset C$.  

All together, 
we write the sequence $A \rightarrow B \rightarrow C$.

The Itinerary Lemma will show the connection between periodic points and a loops of covering intervals.

The following result, which also has independent interest, 
will be used in the proof of the itinerary lemma.
Let $I$ and $J$ be closed, bounded intervals of $\Bbb R$. 

\begin{lemma}
\label{it_lemma}

If $f$ is continuous on $I$ and $I \to J$, then there exists an interval $K$ such that $K \subset I$ and $f(K)=J$.

\end{lemma}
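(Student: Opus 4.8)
The plan is to work with the closed interval $I=[a,b]$ and the target interval $J=[c,d]$, and to extract $K$ as a suitable minimal subinterval on which $f$ sweeps from one endpoint of $J$ to the other. Since $I\to J$ means $f(I)\supset J$, there exist points $p,q\in I$ with $f(p)=c$ and $f(q)=d$; without loss of generality assume $p\le q$ (the case $q\le p$ is symmetric, or is handled by relabeling the endpoints of $J$). Restrict attention to the subinterval $[p,q]\subseteq I$. On $[p,q]$ the continuous function $f$ takes the value $c$ at $p$ and the value $d$ at $q$, so by the Intermediate Value Theorem $f([p,q])\supseteq J$; but $f([p,q])$ may be strictly larger than $J$, so we need to shrink.

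The key step is to push the left endpoint to the right as far as possible while still being able to reach $c$. Concretely, let
\[
 K_{\ell}=\sup\{\,x\in[p,q] : f(x)=c\,\}.
\]
By continuity $f(K_\ell)=c$, and $K_\ell<q$ since $f(q)=d\ne c$. Now on $[K_\ell,q]$ we still have $f(K_\ell)=c$, $f(q)=d$, and moreover $f(x)\ne c$ for $x\in(K_\ell,q]$. Next pull the right endpoint down: let
\[
 K_r=\inf\{\,x\in[K_\ell,q] : f(x)=d\,\}.
\]
Again by continuity $f(K_r)=d$, and $K_r>K_\ell$ since $f(K_\ell)=c\ne d$. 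Set $K=[K_\ell,K_r]$. Then $K\subset I$, the endpoints of $K$ map to the endpoints of $J$, so $f(K)\supseteq J$ by the Intermediate Value Theorem; and by construction $f$ avoids $c$ on $(K_\ell,K_r]$ and avoids $d$ on $[K_\ell,K_r)$. It remains to argue $f(K)\subseteq J$: if some $x\in K$ had $f(x)<c$, then on $[K_\ell,x]$ the function $f$ would pass from $c$ down below $c$ and (reaching $x$) would have to return to hit $c$ again at some point in $(K_\ell,x]\subseteq(K_\ell,K_r]$, contradicting the definition of $K_\ell$ via $K_r>K_\ell$ — wait, more carefully: $f(x)<c$ together with $f(K_r)=d>c$ forces, by IVT on $[x,K_r]$, a point in $(K_\ell,K_r]$ where $f=c$, contradicting the maximality of $K_\ell$. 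Symmetrically $f(x)>d$ forces a point in $[K_\ell,K_r)$ where $f=d$, contradicting the minimality of $K_r$. Hence $c\le f(x)\le d$ for all $x\in K$, i.e. $f(K)=J$.

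The main obstacle, and the point deserving the most care, is precisely this last containment $f(K)\subseteq J$: getting the covering $f(K)\supseteq J$ is immediate from the Intermediate Value Theorem, but ensuring equality requires choosing $K$ \emph{extremal} in the right way and then running two short IVT arguments to rule out overshoot past either endpoint of $J$. One must also handle the bookkeeping of which endpoint of $J$ lies "to the left" under $f$ — i.e. whether the preimage of $c$ or of $d$ comes first — so it is cleanest to fix the orientation once at the start by the $p\le q$ assumption and note the other case is identical after swapping the roles of $c$ and $d$. Degenerate cases ($J$ a single point, or $p=q$) are trivial and can be dispatched in a line.
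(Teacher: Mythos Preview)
Your proof is correct and follows essentially the same approach as the paper: both arguments select extremal preimages of the endpoints $c$ and $d$ to form $K$, then use the Intermediate Value Theorem to rule out overshoot past either endpoint of $J$. The only cosmetic difference is organizational---the paper splits into two global cases according to whether $\max f^{-1}(d)$ lies left or right of $\min f^{-1}(c)$, whereas you fix one ordered pair $p\le q$ at the outset and dispatch the other orientation by symmetry, which is slightly cleaner.
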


\begin{proof}

Let us denote $I = [a, b]$ and $J = [c, d]$. Let us call

$$ A = f^{-1}(c) \cap I 
\qquad\hbox{and}\qquad B = f^{-1}(d) \cap I . $$
Since $f$ is continuous, and $\{c\}, \{d\}$ are closed sets in $\mathbb R$,
then both $A$ and $B$ are closed. Since we assume that $f(I) \supset J$,
then both $A$ and $B$ are nonempty,  bounded, 
disjoint subsets of $I$. Therefore, both $A$ and 
$B$ have a maximum and a minimum element in $I$.

We will distinguish two cases.

{\sc Case 1.} $\max B < \min A$ (See Figure \ref{picB}).

\begin{figure}[ht!]
\centering
\includegraphics[width=80mm]{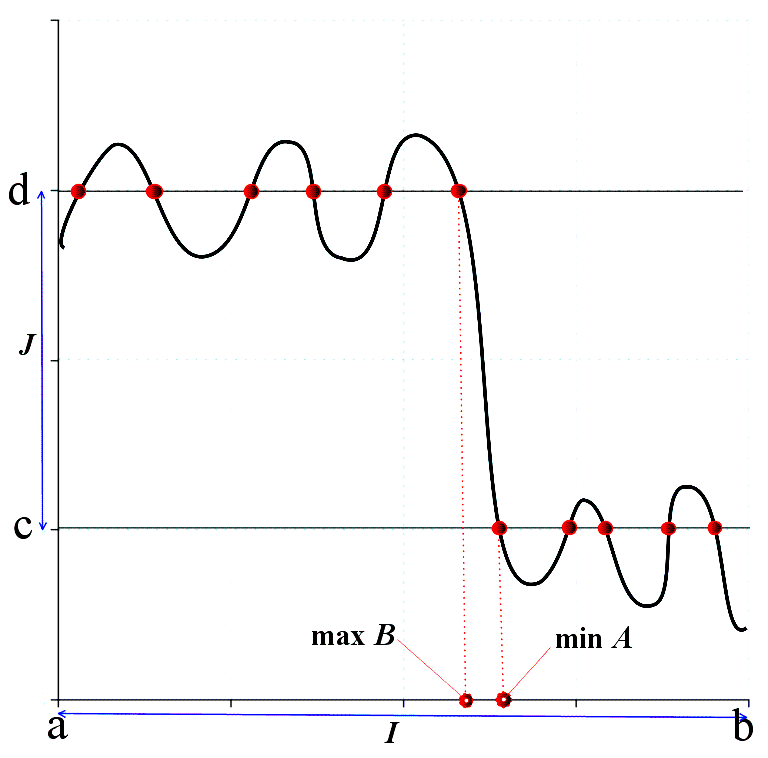}
\caption{First case}
\label{picB}
\end{figure}

Call $\alpha = \max B$, $\beta = \min A$, and $K = [\alpha, \beta]$.
We have $K \subset I$; we claim that $f(K) = J$. Indeed, by the Intermediate Value Theorem,
$f(K) \supset J$.

Assume, by contradiction, that $f(K)$ is strictly larger than $J$. Then there exists an 
$y\in f(K) \setminus J$. Then either $y > d$, or $y < c$.

If $y > d$, then there exists an $x \in [\alpha, \beta]$ such that $f(x) = y > d$.
Hence, by the Intermediate Value Theorem, there exists an $s \in [x, \beta]$ such that 
$f(s) = d$; hence, $s \in B$ (Figure \ref{picC}). 
\begin{figure}[ht!]
\centering
\includegraphics[width=80mm]{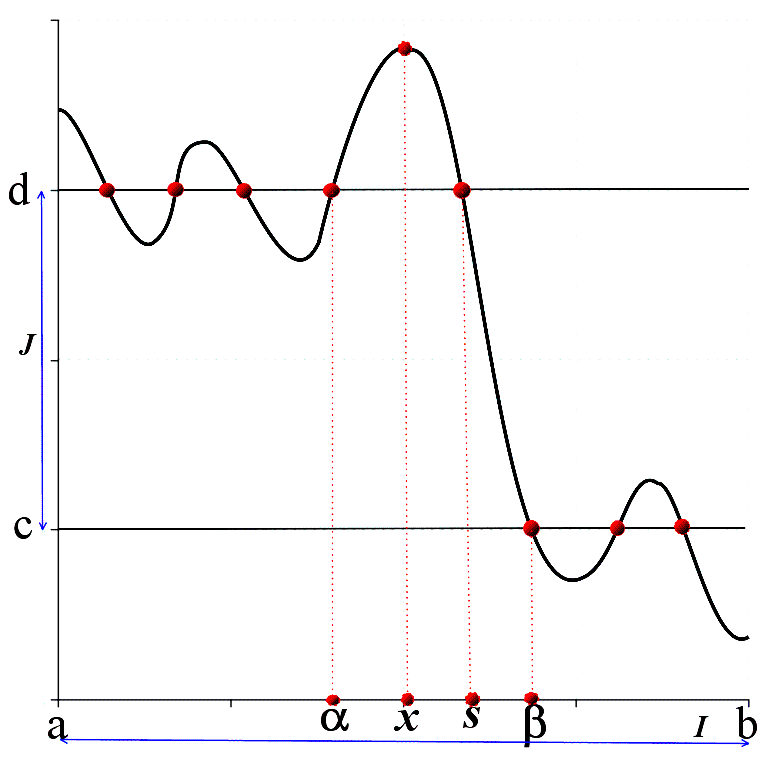}
\caption{}
\label{picC}
\end{figure}
Since $\alpha < x < s$, this contradicts the fact that $\alpha$ was the
largest element in $B$.

The case $y < c$ is ruled out similarly, by obtaining a point $s \in (\alpha, \beta)$ for
which $f(s) = c$, contradicting the fact that $\beta = \min A$. 

\smallskip

{\sc Case 2.} $\min A < \max B$. (See Figure \ref{picD}).

\begin{figure}[ht!]
\centering
\includegraphics[width=80mm]{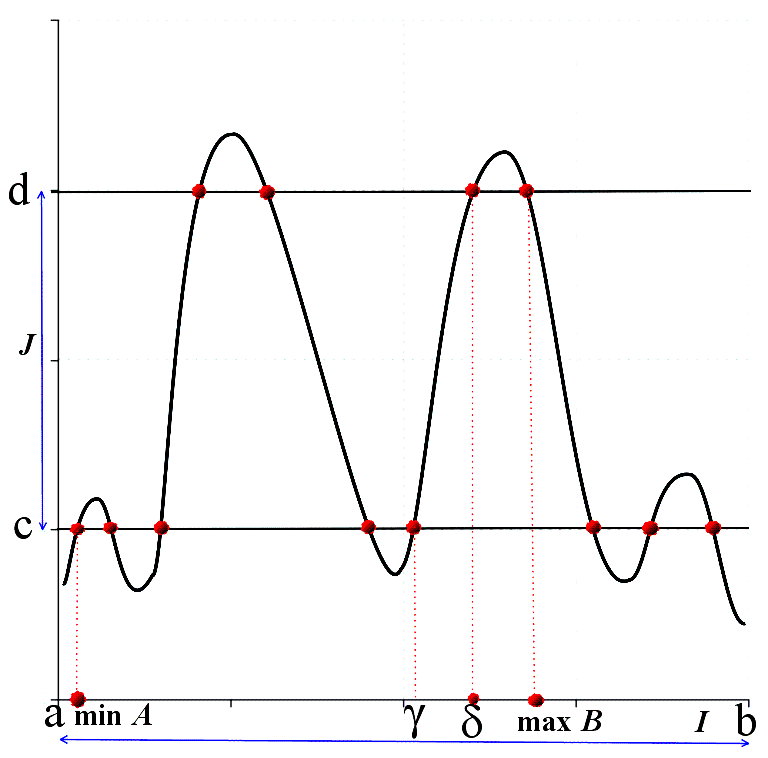}
\caption{Second case}
\label{picD}
\end{figure}

The set $C = A \inter [a, \max B]$ of those elements in $A$ that are less than $\max B$ is 
then closed and nonempty. Let us call $\gamma = \max C$.

Next,  the set $D = B \inter [\gamma, \max B] = B \inter [\gamma, b]$ 
of those elements in $B$ that 
are between $\gamma$ and $\max B$, is also closed and nonempty;
let us denote $\delta = \min D$.

Finally, we define $K$ as the interval $[\gamma, \delta]$. 
As in Case 1, using the Intermediate Value Theorem one shows that $f(K) \supset J$.
To show that actually $f(K)$ equals $J$, one argues by contradiction, as in Case 1, 
and one obtains a contradiction either with the definition of $\gamma$, or the definition of 
$\delta$. 
\end{proof}


\begin{lemma}[Itinerary Lemma, \cite{BurnsHass}, \cite{Shark2}]
\label{Itin1}  
1) If $I_0, I_1, I_2, \cdots, I_{n-1}$ are closed bounded intervals and  $I_0 \rightarrow  I_1 \rightarrow I_2 \rightarrow \cdots \rightarrow I_{n-1} \rightarrow I_0$, then there is a fixed point $\gamma$ of $f^n$ such that 

$$f^i(\gamma) \in I_i$$
for $0 \leq i <n$, and 

$$f^n(\gamma)=\gamma.$$

We say that such $\gamma$ {\it follows the given loop of intervals}.

2) Moreover, if the endpoints of all the intervals $I_i$ belong to a given $k$-orbit 
$B = \{\beta_1, \beta_2, \ldots, \beta_k\}$ which does not follow the loop,%
{\color{blue}
 \footnote{Meaning that no point of $B$ follows the loop.}
 }
and if 
the interior 
$Int(I_0)$ of $I_0$ is disjoint from every $I_1, \ldots I_{n-11}$, 
then $\gamma$ has least period $n$.

\end{lemma}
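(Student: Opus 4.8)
The plan is to prove part 1) first by a nested-intervals construction, then derive the periodicity claim in part 2) by a contradiction argument exploiting the disjointness hypothesis. For part 1), the idea is to build a decreasing chain of closed subintervals of $I_0$ whose images track the prescribed loop. First I would apply Lemma~\ref{it_lemma} repeatedly, working backwards along the loop: since $I_{n-1} \to I_0$, there is a closed interval $K_{n-1} \subset I_{n-1}$ with $f(K_{n-1}) = I_0$. Since $I_{n-2} \to I_{n-1} \supset K_{n-1}$, and because $f$ restricted to $I_{n-2}$ maps onto something containing $K_{n-1}$, another application of Lemma~\ref{it_lemma} (in the form: if $I \to J$ and $J' \subset J$ is a closed interval, then some closed $K \subset I$ has $f(K) = J'$) yields a closed $K_{n-2} \subset I_{n-2}$ with $f(K_{n-2}) = K_{n-1}$. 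Iterating down to index $0$, I obtain closed intervals $K_0 \subset I_0$, $K_1 \subset I_1$, \ldots, $K_{n-1} \subset I_{n-1}$ with $f(K_i) = K_{i+1}$ for $i < n-1$ and $f(K_{n-1}) = I_0 \supset K_0$. Consequently $f^n(K_0) \supset K_0$, so $f^n$ maps the closed bounded interval $K_0$ over itself; by Theorem~\ref{fixed2} there is a fixed point $\gamma \in K_0$ of $f^n$. By construction $f^i(\gamma) \in K_i \subset I_i$ for each $i$, and $f^n(\gamma) = \gamma$, which is exactly what part 1) asserts.

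For part 2), suppose toward a contradiction that the prime period of $\gamma$ is $p < n$. Then $p \mid n$, and the orbit of $\gamma$ is $\{\gamma, f(\gamma), \ldots, f^{p-1}(\gamma)\}$, which repeats with period $p$ along the loop; in particular $\gamma = f^p(\gamma) = f^{2p}(\gamma) = \cdots$. The key observation is that $f^p(\gamma) = \gamma \in \mathrm{Int}(I_0)$ would be forced if $\gamma$ lies in the interior of $I_0$, but we must handle the boundary case separately. Here is where the hypotheses do the work: $\gamma \in K_0 \subset I_0$, and $f^i(\gamma) \in I_i$ for all $i$; if $0 < p < n$, then $\gamma = f^p(\gamma) \in I_p$, so $\gamma \in I_0 \cap I_p$. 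By assumption $\mathrm{Int}(I_0)$ is disjoint from $I_p$, hence $\gamma \in I_0 \setminus \mathrm{Int}(I_0)$, i.e.\ $\gamma$ is an endpoint of $I_0$. But every endpoint of every $I_i$ belongs to the $k$-orbit $B$, which by hypothesis does not follow the loop. Since $\gamma$ itself follows the loop (that is exactly what part 1) established for $\gamma$), and $\gamma$ is a point of $B$, we contradict the assumption that no point of $B$ follows the loop. Therefore no such $p$ exists and the least period of $\gamma$ is $n$.

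A subtlety I would need to address carefully: the fixed point $\gamma$ produced by Theorem~\ref{fixed2} might a priori coincide with an endpoint of $I_0$ that happens not to be in $B$, or $\gamma$ might equal a point of $B$ that does follow the loop in a different way — so I should be precise about the phrase ``$\gamma$ follows the loop'' and make sure the statement ``$\gamma \in B$ and $\gamma$ follows the loop'' genuinely contradicts the hypothesis as worded (``no point of $B$ follows the loop''). The cleanest fix is: in part 2) the argument shows $\gamma$ is an endpoint of $I_0$, hence $\gamma \in B$; but $\gamma$ follows the loop by part 1); this directly contradicts the footnoted hypothesis. One also has to note that the disjointness is only assumed between $\mathrm{Int}(I_0)$ and $I_1, \ldots, I_{n-1}$, and $p$ with $0 < p < n$ indeed indexes one of those, so $I_p$ is among the intervals disjoint from $\mathrm{Int}(I_0)$.

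\textbf{Main obstacle.} The genuinely delicate point is not part 1), which is a routine nested-interval/preimage construction once Lemma~\ref{it_lemma} is in hand, but rather making the periodicity argument in part 2) airtight — specifically, correctly ruling out that a short period $p < n$ can hide at an endpoint of $I_0$. The hypotheses are tailored exactly so that any would-be period-$p$ point with $p < n$ is pinned to the boundary of $I_0$, lands in the orbit $B$, and yet follows the loop — contradicting that $B$ does not follow the loop. Getting the logical chain ``$p < n$ $\Rightarrow$ $\gamma \in I_0 \cap I_p$ $\Rightarrow$ $\gamma \in \partial I_0$ $\Rightarrow$ $\gamma \in B$ $\Rightarrow$ contradiction'' stated without gaps, and being careful that $\gamma$ really does ``follow the loop'' in the sense that makes it a forbidden point of $B$, is where the proof must be written with care.
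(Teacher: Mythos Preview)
Your proof is correct and follows essentially the same approach as the paper. Part 1) is identical: both you and the paper apply Lemma~\ref{it_lemma} backwards along the loop to build the chain $K_0 \subset I_0, \ldots, K_{n-1} \subset I_{n-1}$ with $f(K_i)=K_{i+1}$ and $f(K_{n-1})=I_0$, then invoke the appropriate fixed-point theorem on $f^n(K_0) \supset K_0$. For part 2), the paper argues directly (since $\gamma$ follows the loop and $B$ does not, $\gamma\notin B$, so $\gamma\in\mathrm{Int}(I_0)$; then $f^k(\gamma)\in I_k$ disjoint from $\mathrm{Int}(I_0)$ forces $f^k(\gamma)\neq\gamma$ for every $0<k<n$), while you run the contrapositive as a contradiction argument --- logically the same chain of implications in reverse order.
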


\begin{proof}

1) Assume that 

$$f(I_i) \supset I_{i+1}$$
for every $i=0 \cdots n-2$, and
$$f(I_{n-1}) \supset I_0.$$

By Lemma \ref{it_lemma}, there exists a closed bounded interval  $K_{n-1}$ 
such that $K_{n-1} \subset I_{n-1}$ and
$$f(K_{n-1})=I_0.$$
Further, let $K_{n-2}$ be a closed interval such that
\begin{eqnarray*}
K_{n-2} \subset I_{n-2} \qquad \hbox{and} \qquad f(K_{n-2})=K_{n-1} \subset I_{n-1}. 
\end{eqnarray*}

Therefore,
\begin{eqnarray*}
f^2(K_{n-2})=f(K_{n-1})=I_0. \\
\end{eqnarray*}

By induction, let $K_{n-j}$ be a subinterval of $I_{n-j}$ such that $f(K_{n-j})=K_{n-j+1}$, 
for $j=1,2,3,\ldots n $. 
In particular we get $K_0 \subset I_0$, so that

$$f(K_0)=K_1 \subset I_1, $$
$$f^2(K_0)=f(K_1)=K_2 \subset I_2, $$
$$f^3(K_0)=f(K_2)=K_3 \subset I_3, $$
$$\cdots                 $$
$$f^n(K_0)=f(K_{n-1})=K_n \subset I_0.$$

Since, $K_0 \subset I_0=f^n(K_0)$, by 
Theorem \ref{fixed1} of Section \ref{fixedPts}, 
we get a fixed point $\gamma \in K_0$ of $f^n$ (i.e. $f^n(\gamma)=\gamma)$.   Moreover, $f^i(\gamma) \in f^i(K_0)=K_i \subset I_i$ for $i=1,2,3,\ldots n$, so that $\gamma$ indeed follows the loop.

2) Under the extra assumptions, if $\gamma$ follows the loop, then $\gamma \not\in B$, hence
$\gamma$ is an interior point of $I_0$. If $k$ is any positive integer less than $n$, then $f^k(\gamma)$
belongs to $I_k$, which is disjoint from ${\rm Int}(J_0)$. Hence, $f^k(\gamma) \neq \gamma$ for 
every such $k$, which proves that $\gamma$ has least period $n$. 
\end{proof}

\section{Period three implies all periods}
\label{period3section}

The result in this section is a particular case of Sharkovsky theorem. 
However, we believe this case is important on its own and, moreover, 
it illustrates some of the ideas and techniques used for the general theorem. 
Moreover, for historical reasons this case was the first to appear in Western literature.

\bigskip

\begin{theorem}
\label{chaos}
Let $f$ be a continuous function of a closed bounded interval $J$ of $\mathbb R$ into
itself. 
If $f$ has an orbit or period three, then it has orbits of all periods.%
\footnote{Actually, as the proof shows, the assumption that $f$ sends a closed bounded interval
                into itself is not required for this particular result.}
\end{theorem}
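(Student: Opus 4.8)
The plan is to reduce the statement to a collection of covering loops to which the Itinerary Lemma (Lemma \ref{Itin1}) can be applied. Let $\{a, b, c\}$ be the period-three orbit, labelled so that $a < b < c$. Since $f$ permutes these three points cyclically, one of two mirror-image situations occurs: either $f(a) = b$, $f(b) = c$, $f(c) = a$, or $f(a) = c$, $f(c) = b$, $f(b) = a$. I would treat the first case in detail and remark that the second is symmetric (conjugating by $x \mapsto -x$, or simply repeating the argument with the roles of the two "halves" swapped). Set $L = [a, b]$ and $R = [b, c]$; these are the two intervals I will work with.

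First I would establish the basic covering relations. Because $f(a) = b$ and $f(b) = c$, the image $f(L)$ contains $[b, c] = R$, so $R \subset f(L)$, i.e. $L \to R$. Because $f(b) = c$ and $f(c) = a$, the image $f(R)$ contains $[a, c] = L \cup R$, so both $L \to L$ and $R \to L$ hold; in particular $R \to R$ as well. The key structural facts are thus $L \to R$, $R \to L$, and $R \to R$, and these are exactly what is needed to manufacture a loop realizing any prescribed period.

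Next I would build, for each integer $n \ge 1$, an explicit loop of intervals of length $n$ and feed it to the Itinerary Lemma. For $n = 1$, the loop $R \to R$ gives a fixed point of $f$ in $R$ (this also follows from Theorem \ref{fixed2}). For $n = 2$, the loop $L \to R \to L$ gives a point of period dividing $2$; one checks it is not the fixed point just produced (the fixed point of $f$ lies in $R$, and a point following $L \to R \to L$ lies in $L$ with its image in $R$, so if it had period $1$ it would lie in $L \cap R = \{b\}$, and $b$ has period $3$, not $1$) — hence genuine period $2$. For $n = 3$ the hypothesis already supplies the orbit. For $n \ge 4$, I would use the loop
\begin{equation*}
\underbrace{R \to R \to \cdots \to R}_{n-1} \to L \to R,
\end{equation*}
that is, stay at $R$ for $n-1$ steps, then go $R \to L \to R$. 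The Itinerary Lemma part (1) produces a point $\gamma$ with $f^{\,n}(\gamma) = \gamma$ and $f^{\,i}(\gamma) \in I_i$ along the loop; in particular $f^{\,n-1}(\gamma) \in L$ while $f^{\,i}(\gamma) \in R$ for $0 \le i \le n-2$. To see the least period is exactly $n$, I would invoke part (2) of the Itinerary Lemma: the endpoints of $L$ and $R$ all lie in the $3$-orbit $B = \{a, b, c\}$, and $B$ does not follow this loop (a point of $B$ has period $3$, so for $n \ne 3$ it cannot follow a loop of length $n$); moreover $\mathrm{Int}(I_{n-1}) = \mathrm{Int}(L) = (a, b)$ is disjoint from every earlier interval in the loop, since those are all $R = [b, c]$ and $(a,b) \cap [b,c] = \varnothing$. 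Applying the lemma with the loop cyclically rotated so that $L$ is the first interval (which changes nothing, as the loop condition is rotation-invariant and produces a point on the same orbit) yields least period $n$.

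The main obstacle is the least-period bookkeeping: part (2) of the Itinerary Lemma requires the interior of the \emph{first} interval of the loop to be disjoint from all the others, so I must be careful to present the loop with $L$ in the leading position, and I must verify the "$B$ does not follow the loop" hypothesis honestly for each $n$ — which is immediate since every element of $B$ has prime period $3$, and a point following a length-$n$ loop satisfies $f^{\,n} = \mathrm{id}$ on it, forcing its period to divide $n$, impossible when $3 \nmid n$, and needing a separate one-line check when $3 \mid n$ (there, a point of $B$ following the all-but-one-$R$ loop would have to sit in $R$ for $n-1 \ge 3$ consecutive iterates, but the $B$-orbit visits $L\setminus R \ni a$ every three steps). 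Everything else is routine: the covering relations come directly from the Intermediate Value Theorem applied to the three known functional values, and the existence of the periodic points is handed to us by Lemma \ref{Itin1}.
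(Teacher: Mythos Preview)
Your proof is correct and follows essentially the same route as the paper: split $[a,c]$ into the two subintervals $L=[a,b]$ and $R=[b,c]$ (the paper's $J_1$ and $J_2$), establish the covering relations $L\to R$ and $R\to L\cup R$, and then apply the Itinerary Lemma to the loop consisting of one copy of $L$ and $m-1$ copies of $R$, using part~(2) together with the fact that $\operatorname{Int}(L)\cap R=\varnothing$ and that the $3$-orbit cannot follow the loop. The only cosmetic differences are that the paper writes the loop with $J_1$ in the first position from the outset (avoiding your rotation remark), and verifies ``$B$ does not follow the loop'' by tracing the orbit of $\beta_2$ explicitly rather than via your divisibility-plus-consecutive-$R$ argument.
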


\begin{proof}
To fix ideas, 
let us 
consider the following example of a map from the quadratic family 
\begin{eqnarray*}
f(x)= \lambda x(1-x) 
\end{eqnarray*}
for the particular choice of 
$\lambda =1+\sqrt{8}$ of the parameter, so that
\begin{eqnarray*}
f(x)= (1+\sqrt{8}) x (1-x) .
\end{eqnarray*}
One can check that $f$ sends the unit interval $I = [0, 1]$ into itself. 
Moreover, one can show that 
the map exhibits a cycle of period three, namely, 
\begin{eqnarray*}
f(\beta_1) &=& \beta_2, \\
f(\beta_2) &=& \beta_3, \\
f(\beta_3) &=& \beta_1, \text{\;\; or\;\;} \\
f^3(\beta_1) &=& \beta_1, 
\end{eqnarray*}
where $\beta_1 \approx 0.15992689$.

\begin{figure}[ht!]
\centering
\includegraphics[width=80mm]{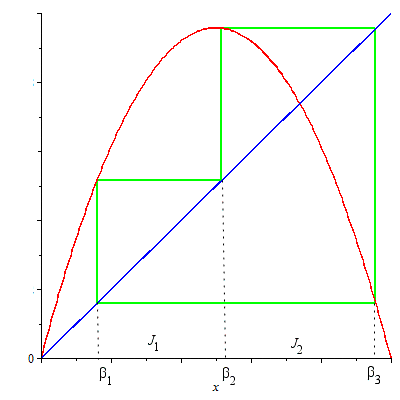}
\caption{Cycle of period 3, $\lambda=1+\sqrt{8}$}
\label{3cycle}
\end{figure}

As shown in Fig.~\ref{3cycle}, let $J_1$ be the interval with endpoints $\beta_1, \beta_2$, so that $J_1=[\beta_1, \beta_2]$; similarly, let $J_2=[\beta_2, \beta_3]$.%
\footnote{
The arguments that follow will apply to {\it any\/} map of an interval into itself 
which has a three-orbit, with a suitable labeling of the intervals $J_1$ and $J_2$.}

A similar behavior is exhibited by the piecewise-linear map shown in Figure \ref{piecewise}. 

Observe that $f: I \to I$ with $J_1, J_2 \subset I$, and $f$ is continuous.
Moreover, 
$J_1 \to J_2$ and $J_2 \to J_1 \cup J_2$, which means that $J_1$ covers $J_2$, 
and $J_2$ covers itself and $J_1$.
This can be illustrated with the following graph:

\begin{figure}[ht!]
\centering
\includegraphics[width=80mm]{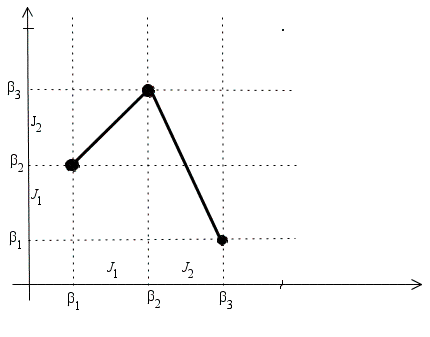}
\caption{Cycle of period 3}
\label{piecewise}
\end{figure}

\begin {center}
\begin {tikzpicture}[-latex ,auto ,node distance =2 cm and 3cm ,on grid ,
semithick ,
state/.style ={ circle ,top color =white , bottom color = processblue!20 ,
draw,processblue , text=blue , minimum width =0.4 cm}]
\node[state] (A)
{$J_1$};
\node[state] (B) [right=of A] {$J_2$};
\path (A) edge [bend right = -15] node[right] {} (B);
\path (B) edge [bend left =15] node[below =0.15 cm] {} (A);
\path (B) edge [loop right =25] node[above] {} (B);
\end{tikzpicture}
\end{center}

Since $J_2 \to  J_2$, by Theorem \ref{fixed2} we get a fixed point, that is,  a point of period 1.  

Next, 
note that there is a loop $J_1 \to J_2 \to J_1$; 
then by the Itinerary Lemma (Lemma \ref{Itin1}) there exists a point $\mu \in J_1$ that follows the loop, such that $f^2 \mu = \mu$; this will be a point of period two. 
(It is clear this point must belong to the interior of $J_1$, since the endpoints of $J_1$ are part 
of an orbit of period three.)

Thus, period 3 implies both period 1 and period 2.

Let $m$ be an integer, with $m > 3$,
and
consider the following loop:

\begin{equation}
\label{mLoop}
J_1 \to \underbrace{J_2 \to J_2 \to J_2 \to \ldots \to J_2}_{m-1 \text{ copies of } J_2} \to J_1.
\end{equation}

By the Itinerary Lemma (Lemma \ref{Itin1}) 
we conclude that there is $\xi \in J_1$ 
which follows this loop, such that  $f^m(\xi)=\xi$. 

Let us now prove that $m$ is the least period of $\xi$.
Indeed, first of all, Int$(J_1)$ is disjoint from $J_2$. 
Next, the endpoints of $J_1$ and $J_2$ are elements of the
three-orbit $B = \{\beta_1, \beta_2, \beta_3\}$, which does {\it not\/}
follow the loop (\ref{mLoop}).
Indeed, the first two terms in this loop can only happen for the point $p = \beta_2$
from $B$. 
But then $f(p) = \beta_3$, and $f^2(p) = \beta_1 \notin J_2$. Since $m-1$ is
at least two, we see that $B$ does not follow the loop (\ref{mLoop}).  
By the second part of Lemma \ref{Itin1}, we conclude that $m$ is indeed the least period for
$\xi$.

He have thus shown that this map
(or, in general, any continuous map of an interval into itself, which
has a three-orbit) has a periodic orbit of prime period $m$, for any positive integer $m$.
\end{proof}

\section{Important Lemmata}
\label{lemmata}

In preparation for Sharkovsky theorem, we will prove several results, which have 
independent interest.

Here we are going to use the following notation: if $a,b \in \R$, then 

$\langle a, b \rangle$

is a closed interval with endpoints $a,b$;  it is either unknown or irrelevant which one is greater, $a$ or $b$.

\begin{lemma}
\label{lemma1} 
A continuous map $f: I \to I$ has a cycle of period $2$ if and only if there exists a point $\xi \in I$, such that $\xi \neq f(\xi)$ and 
$ \xi \in f(\langle \xi, f(\xi) \rangle)$.
\end{lemma}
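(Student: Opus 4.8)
The statement is an "if and only if," so I would prove the two directions separately. The forward direction is almost immediate: if $f$ has a $2$-cycle $\{x_0, x_1\}$ with $x_1 = f(x_0)$, $x_0 = f(x_1)$ and $x_0 \neq x_1$, then I would simply take $\xi = x_0$. We have $\xi \neq f(\xi)$ since the cycle has period $2$, and $f(\langle \xi, f(\xi)\rangle) = f(\langle x_0, x_1\rangle)$ is a closed interval (by the Intermediate Value Theorem, as used in Section~\ref{fixedPts}) whose endpoints include $f(x_0) = x_1$ and $f(x_1) = x_0 = \xi$, so in particular $\xi \in f(\langle \xi, f(\xi)\rangle)$. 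So the only content is the reverse direction.

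For the reverse direction, suppose there is a point $\xi$ with $\xi \neq f(\xi)$ and $\xi \in f(\langle \xi, f(\xi)\rangle)$. Write $\eta = f(\xi)$ and let $K = \langle \xi, \eta \rangle$. The plan is to produce a $2$-cycle by applying the fixed-point machinery of Section~\ref{fixedPts} to $f^2$ on a suitable subinterval, and then argue that the fixed point it produces is genuinely of period $2$ rather than period $1$. First, observe $\xi \in f(K) $ and also $\eta = f(\xi) \in f(K)$ trivially, so $f(K) \supset \langle \xi, \eta\rangle = K$; that is, $K \to K$. By Theorem~\ref{fixed2} applied to $f$ on $K$, there is a fixed point of $f$ in $K$ — but that is a period-$1$ point, not what we want. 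Instead I would use Lemma~\ref{it_lemma}: since $K \to K$ (indeed $f(K) \supset K$), there is a subinterval $K' \subset K$ with $f(K') = K$. Then $f^2(K') = f(K) \supset K \supset K'$, so $K'$ covers itself under $f^2$, and Theorem~\ref{fixed2} gives a point $\gamma \in K'$ with $f^2(\gamma) = \gamma$. Equivalently, one can phrase this directly via the Itinerary Lemma (Lemma~\ref{Itin1}) applied to the loop $K' \to K \to K'$, but the point is the same: we get $\gamma$ with $f^2(\gamma) = \gamma$ and $\gamma, f(\gamma) \in K$.

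The main obstacle is the last step: ruling out that $\gamma$ is a fixed point of $f$ itself, i.e. showing some point on this orbit has prime period exactly $2$. This is where the hypothesis $\xi \in f(\langle \xi, f(\xi)\rangle)$ must be used more carefully than just "$K$ covers $K$," and where I expect to have to choose $K'$ (or the loop) cleverly. The idea I would pursue: use the hypothesis to locate a point $c \in K$ with $f(c) = \xi$, and note that $\xi$ itself is an endpoint of $K$ with $f(\xi) = \eta$ the \emph{other} endpoint. Consider the function $h(x) = f^2(x) - x$ on the subinterval $\langle c, \xi\rangle$ (or on a subinterval of $K$ mapping onto $\langle \xi, \eta \rangle$): at $\xi$ we have $h(\xi) = f(\eta) - \xi$, and at $c$ we have $h(c) = f(\xi) - c = \eta - c$; by choosing the subinterval and tracking which side of the identity $f$ lies on at the relevant endpoints, one forces $h$ to change sign, yielding a periodic point $\gamma$ of $f^2$ with $\gamma$ strictly between $\xi$ and the relevant point, hence $\gamma \neq \xi$; then a short argument (using that $f$ maps $K$ across itself in a way that separates $\gamma$ from $f(\gamma)$) shows $f(\gamma) \neq \gamma$. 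Concretely, I would argue by contradiction: if every fixed point of $f^2$ in the relevant interval were a fixed point of $f$, then $f(x) - x$ would have constant sign on the nested intervals we constructed, contradicting the sign information forced by $f(\xi) = \eta \neq \xi$ together with $\xi \in f(K)$ (which gives a point where $f$ exceeds, or falls below, the identity in the opposite direction). Assembling these sign comparisons carefully is the delicate part; everything else is a routine application of the Intermediate Value Theorem and the lemmas already established.
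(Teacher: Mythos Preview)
Your forward direction is fine and matches the paper. For the reverse direction you correctly identify that the whole difficulty is in upgrading a fixed point of $f^2$ to a genuine period-$2$ point, but your plan for this step does not actually work as stated, and this is exactly the step where the paper invests all its effort.

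Concretely: with $\xi<\eta=f(\xi)$ and $c\in(\xi,\eta]$ satisfying $f(c)=\xi$, you propose to look at $h(x)=f^2(x)-x$ on $\langle c,\xi\rangle$. You compute $h(c)=\eta-c>0$, but $h(\xi)=f(\eta)-\xi$ has no determined sign, so there is no forced sign change on that interval. Your contradiction sketch (``if every fixed point of $f^2$ were a fixed point of $f$, then $f(x)-x$ would have constant sign'') is also not valid: the hypothesis says nothing about the sign of $f(x)-x$ away from those points, and in fact $f(x)-x$ already changes sign on $[\xi,c]$ (positive at $\xi$, negative at $c$), so there \emph{is} a fixed point of $f$ there --- that is not a contradiction. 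The $K'\subset K$ construction via Lemma~\ref{it_lemma} likewise gives a fixed point of $f^2$ but carries no information preventing it from being fixed by $f$.

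The paper's missing idea is this: work entirely to the \emph{right} of $c$, and split on whether $f$ has any fixed point there. If it does, let $\gamma$ be the smallest one; then $(c,\gamma)$ is fixed-point free. The paper manufactures a point $\epsilon\in[c,\gamma]$ with $f^2(\epsilon)<c$ (by sending $\epsilon$ to a fixed point $\delta\in[a,c]$, which exists because $[a,c]$ covers itself), while $f^2(c)=f(a)>c$; the IVT then gives a zero of $h$ in $[c,\epsilon]\subset[c,\gamma)$, which cannot be a fixed point of $f$. If there is no fixed point to the right of $c$, the paper uses the standing hypothesis $f\colon I\to I$ (which you never invoke): the right endpoint of $I$ furnishes a point $d>c$ with $f^2(d)\le d$, and IVT on $[c,d]$ gives a fixed point of $f^2$ that is automatically not fixed by $f$. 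Without this case split --- and in particular without using compactness of $I$ in the second case --- your outline cannot be completed.
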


\begin{proof}

Assume first that $f$ has a point of period two.
Recall that a point $a$ has  prime period $m$, if $f^m(a)=a$, and $m$ is the smallest positive integer for which this holds.
In particular, for a point of period 2, one gets
\begin{eqnarray*}
f^2(a)=f(f(a))=a.
\end{eqnarray*}
Then the following is true:

$$a \neq f(a) \qquad\hbox{ and}\qquad a \in 
f(\langle a, f(a)\rangle),$$

which proves the forward direction.

For the backward direction, 
we assume that there exists a point $a \in I$ such that $a \neq f(a)$ and 
$a \in f(\langle a, f(a)\rangle).$

By trichotomy, we have two possibilities:
\begin{eqnarray*}
(i) \;\; f(a)>a, \\
(ii) \;\; f(a)<a.
\end{eqnarray*}
Consider the first case (see Figure \ref{case1}).

\begin{figure}[ht!]
\centering
\includegraphics[width=80mm]{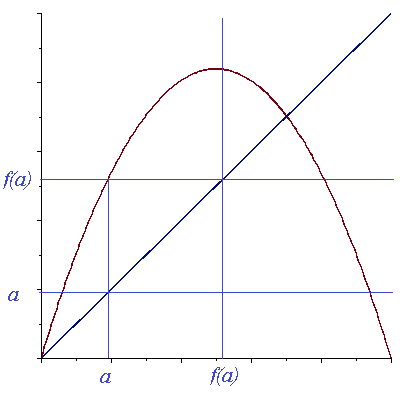}
\caption{Case (i): $f(a)>a$.}
\label{case1}
\end{figure}

Here we have the following assumptions:
$$a \neq f(a), \qquad\hbox{ }\qquad 
a \in f(\langle a, f(a)\rangle), 
\qquad\hbox{ and}\qquad f(a)>a.$$
By the provided assumption 
$a \in f(\langle a, f(a)\rangle)$, 
and by definition of the image of a set under a function, 
there exists a point $c$ on the interval $[a, f(a)]$ 
such that $f(c)=a$. 
Note that $c$ cannot be equal to $a$, since $f(a) = a$ is not true under our assumptions.
Therefore, $c$ lies on $(a, f(a)]$.

If $c$ is  the rightmost endpoint, we have $f(c)=f(f(a))=a$ and since $f(a) \neq a$, we get a point of period 2.

If not, then $c \in  (a, f(a))$, whence $c<f(a)$.

Let us consider the set of points of $I$ bigger than $c$. 

Proceed by cases: 1) there are fixed points bigger than $c$ in $I$;
and 2) there are no fixed points bigger than $c$ in $I$.

\vspace{12pt}

\textbf{1) There are fixed points bigger than $c$.}
 
Since $f(x)$ is continuous, then so is $g(x) = f(x) - x$. Since fixed points of $f$ are zeros of $g$, and since the set of zeros of $g$ is $g^{-1}(\{0\})$, which is the inverse image of a closed bounded set under a continuous function, then this set is closed and bounded (as a subset of $I$). 
Therefore, this set has a smallest element. 

Denote such smallest fixed point as $\gamma$, so that $\gamma =\min \{x=f(x)\Big \vert x\geq c\}$;
in particular, we have $f(\gamma)=\gamma$.

\begin{figure}[ht!]
\centering
\includegraphics[width=90mm]{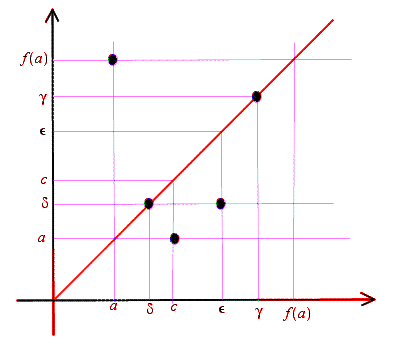}
\caption{Image for the case when $f(a)>a$.}
\label{case11}
\end{figure}

Since we assume that $f(a)>a$ and $f(c)=a$, one can notice that $f([a,c]) \supset [a,c]$. 
By Theorem \ref{fixed2}, there is at least one fixed point $\delta$ of $f$ in $[a, c]$,
so that $f(\delta) = \delta$ (Figure \ref{case11}).

Consider the interval $[c, \gamma]$. 
Since $a < c < \gamma$, we conclude that
$f([c, \gamma]) \supset [a,c]$.

By the Intermediate Value Theorem, since 
$a < \delta < c$,
there must exist a point $\epsilon \in [c, \gamma]$, such that $f(\epsilon)=\delta$.

Consider the dynamics of the point $c$. We know that $f(c)=a$, hence $f(f(c))=f(a)$. 
Keeping in mind that we are considering the case when $c <f(a)$, one can see that $f(f(c))=f^2(c)>c$.

Now, we need to see the dynamics of $\epsilon$. To be precise, $f(\epsilon)=\delta$, but $\delta$ is a fixed point, therefore
$f^2(\epsilon) = f(f(\epsilon)) = f(\delta) = \delta$. 
Since $a<\delta<c$, we can deduce, that $f^2(\epsilon)<c$.

Therefore, there exists a point $x \in [c, \epsilon]$ with $f^2(x)=x$. 
Note that $x$ has true period 2. Indeed, since $c\leq x \leq \epsilon < \gamma$,  and $\gamma$ is the least fixed point, therefore $x$ cannot be a fixed  point.

\vspace{12pt}

Let us now consider the second case.

\textbf{2) There are no fixed points bigger than $c$ in $I$.}
Remember that we are assuming that $f \colon I \to I$, where $I$ is a closed bounded interval.

We claim that there is a point $d > c$ such that $f^2(d) \leq d$. 
Indeed, if we had $f^2(x) > x$ for every $x > c$, then in particular this would hold for the rightmost point of
the interval $I$, leading to a contradiction with the assumption that $f$ maps $I$ into itself and, therefore,
so does $f^2$. 

Note that $f^2(c) > c$. Indeed, $f(c) = a$, so that $f^2(c) = f(a)$, and recall that $c \in (a, f(a))$, so that $f^2(c) = f(a) > c$.

Let us introduce the new function $g(x)=f^2(x)-x$. As a combination of continuous functions, $g(x)$ itself is  continuous. At the points $c,d$ we have
\begin{eqnarray*}
g(d)\leq 0  \qquad\hbox{ and } \qquad
g(c)> 0.
\end{eqnarray*}
Therefore, by the Intermediate Value theorem, there exists a point $x \in [c,d]$, such that $g(x)=0$, which implies $f^2(x)=x$. But since $x > c$, and we assumed there are no fixed points bigger than $c$ on $I$, therefore, the point $x$ is a point of period 2.

This completes the case when $f(a) \neq a$, $a \in f((a, f(a))$.

For the case when $f(a) < a$ and $a \in f((f(a), a))$, assume that $I=[p,q]$, where $p,q$ are the end points of the interval $I$. 
Let us introduce the continuous function $g(x)=-f(x)$. 
Let us also call $J = [-q, -p]$, so that $g \colon J \to J$. 

Then, assuming the second case holds for $f$, that is, $f(a) < a$, and calling $a' = -a$, 
we can see that $g(a') > a'$, so that the first case holds for $g$. Therefore, 
by the first case already proved, 
$g$
has a two-cycle $\{t, g(t)\}$, and this implies that $f$ has the two-cycle
$\{-t, f(-t)\}$.

This completes the proof of Lemma \ref{lemma1}.
\end{proof}

The scheme of the proof of this result is in Figure \ref{sketch_proof}.

\begin{figure}[ht!]
\centering
\includegraphics[width=130mm]{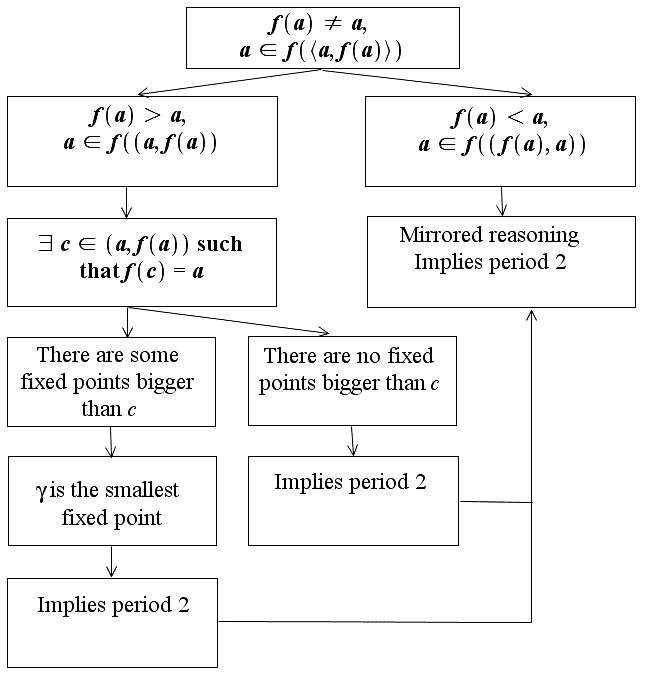}
\caption{The scheme of the proof by cases.}
\label{sketch_proof}
\end{figure}

\begin{lemma}
\label{lemma2} 
If $f$ has a cycle of 
period $m$, with $m >2$, then $f$ has a cycle of period 2.

\end{lemma}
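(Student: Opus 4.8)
The goal is to show that a period-$m$ cycle with $m>2$ forces a period-$2$ cycle. In view of Lemma~\ref{lemma1}, it suffices to produce a point $\xi \in I$ with $\xi \neq f(\xi)$ and $\xi \in f(\langle \xi, f(\xi)\rangle)$. So the plan is to locate such a $\xi$ among the points of the given $m$-cycle, together with a fixed point of $f$ squeezed between them. Let the cycle be $\{x_0, x_1, \dots, x_{m-1}\}$ with $f(x_i) = x_{i+1}$ (indices mod $m$); since $m > 2$, no point of the cycle is fixed, so $f$ has at least one genuine ``jump'' in each direction along the cycle.

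First I would invoke the fact (essentially Theorem~\ref{fixed1} applied on a suitable subinterval, or a direct Intermediate Value Theorem argument) that somewhere inside the convex hull of the orbit there is a fixed point $\gamma$ of $f$: pick two consecutive orbit points $x_j < x_{j+1}$ with $f(x_j) = x_{j+1} > x_j$ and $f(x_{j+1})$ — no, better to argue that since the orbit points are permuted non-trivially, the continuous function $g(x) = f(x) - x$ changes sign on the convex hull of the orbit, giving a fixed point $\gamma$ strictly between the minimum and maximum of the orbit. Now among the orbit points, some lie to the left of $\gamma$ and some to the right (again because $\gamma$ is strictly interior to the hull and is not itself an orbit point). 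Since $f$ permutes the orbit cyclically, it cannot send all left points to left points; hence there is an orbit point $p$ on one side of $\gamma$ whose image $f(p)$ is on the other side. Then $\gamma \in \langle p, f(p)\rangle$.

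Next I would check that this $p$ can serve as the witness $\xi$ for Lemma~\ref{lemma1}. We have $p \neq f(p)$ since $p$ is on a period-$m$ cycle with $m>2$. It remains to show $p \in f(\langle p, f(p)\rangle)$. Here is where I would exploit that $\gamma \in \langle p, f(p)\rangle$: consider the orbit point $q$ that maps to $p$, i.e.\ $f(q) = p$. If $q$ happens to lie in $\langle p, f(p)\rangle$ we are done immediately. If not, then $q$ lies on the far side, and I would use the Intermediate Value Theorem on the interval $\langle p, f(p)\rangle$: the function $f$ takes the value $f(\gamma) = \gamma$ at the interior point $\gamma$, and — by choosing $p$ and the side carefully — also takes a value on the appropriate side of $p$ at one of the endpoints, so that $p$ lies between two attained values of $f$ on $\langle p, f(p)\rangle$, hence $p \in f(\langle p, f(p)\rangle)$. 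Concretely one wants endpoints $u, v$ of the traversal with $f(u) \le p \le f(v)$ (or reversed); $\gamma$ supplies one of these bounds since $\gamma$ is strictly between the orbit's extremes and $f(\gamma)=\gamma$, and the definition of $p$ (an orbit point whose image crosses $\gamma$) supplies the other.

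The main obstacle I anticipate is the bookkeeping of which side of $\gamma$ everything sits on and making the Intermediate Value Theorem step produce $p$ itself (not merely $\gamma$) as an attained value of $f$ on $\langle p, f(p)\rangle$; one may have to choose $p$ to be, say, the orbit point \emph{closest} to $\gamma$ on its side, or the ``outermost'' crossing point, to guarantee that the hull $\langle p, f(p)\rangle$ is large enough in the right direction. A clean alternative, which I would fall back on if the direct argument gets delicate, is to reduce to the period-three-type reasoning: show the orbit yields two adjacent intervals $A, B$ (cut at $\gamma$) with $A \to B$ and $B \to A$ or $B \to B$, producing the loop $A \to B \to A$, and then apply the Itinerary Lemma (Lemma~\ref{Itin1}) — but since here we only need \emph{period exactly $2$}, the cleanest route really is Lemma~\ref{lemma1}, and I expect the crossing-point argument above to close it after the orientation cases are sorted out (and, as in Lemma~\ref{lemma1}, the case $f(p) < p$ can be handled by the same symmetry trick of replacing $f$ by $-f$).
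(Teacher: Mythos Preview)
Your overall strategy---reduce to Lemma~\ref{lemma1} by exhibiting a cycle point $\xi$ with $\xi\in f(\langle\xi,f(\xi)\rangle)$---is exactly the paper's strategy. The difference is in \emph{how} you select $\xi$, and this is where your proposal has a genuine gap.

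Your plan is: find a fixed point $\gamma$ in the convex hull of the orbit, then take $p$ to be any orbit point with $p$ and $f(p)$ on opposite sides of $\gamma$. But an arbitrary such $p$ need \emph{not} satisfy $p\in f(\langle p,f(p)\rangle)$. For instance, take the $6$-cycle $1\to 5\to 2\to 3\to 4\to 6\to 1$ (with $f$ the piecewise-linear interpolation). The unique fixed point is $\gamma=22/5\in(4,5)$. The point $p=1$ is a legitimate crossing point ($1<\gamma<5=f(1)$), yet $f([1,5])=[2,6]$ does not contain $1$. So the Intermediate Value step you describe---``$\gamma$ supplies one bound and the definition of $p$ supplies the other''---simply fails here: $f$ never dips below $2$ on $[1,5]$. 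You correctly sense this obstacle (``one may have to choose $p$ to be \ldots''), but neither of your suggested refinements (``closest to $\gamma$'' or ``outermost crossing'') is made precise or shown to work.

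The paper avoids the fixed point entirely and makes a direct, clean choice: set
\[
\beta_0=\max\{\beta\in B:\ f(\beta)>\beta\}.
\]
Then $f(\beta_0)>\beta_0$, and \emph{every} orbit point strictly to the right of $\beta_0$ (in particular everything in $(\beta_0,f(\beta_0)]\cap B$) moves left under $f$. A short case split (either $(\beta_0,f(\beta_0))$ contains no orbit points, or it does) then shows that some point of $[\beta_0,f(\beta_0)]$ is mapped to a value $\le\beta_0$, while $\beta_0$ itself maps to $f(\beta_0)>\beta_0$; the Intermediate Value Theorem gives $\beta_0\in f([\beta_0,f(\beta_0)])$, and Lemma~\ref{lemma1} finishes. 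In the example above this picks $\beta_0=4$, and indeed $4\in f([4,6])=[1,6]$.

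So: keep your reduction to Lemma~\ref{lemma1}, but replace the fixed-point mechanism by the extremal choice of $\beta_0$ above. That is the missing idea.
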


\begin{proof}

Let $B$ be a cycle of period $m$ of a map $f: I \to I$, 
and let 
$\beta_0 =\max\{\beta \in B \Big \vert f(\beta) > \beta\}$.

 We have two different cases:
 \vspace{12pt}
 
\textit{(i)} The interval $[\beta_0, f(\beta_0)]$ does not contain any points of cycle $B$.

\textit{(ii)} The interval $[\beta_0, f(\beta_0)]$ has some points of cycle $B$.

\vspace{12pt}

Consider the first case. Let the first iterations of $\beta_0$ be $f(\beta_0)=\beta_1$ and $f(\beta_1)=\beta_2$. 

Since $\beta_0$ is the maximum element for which $f(\beta_0) > \beta_0$, then $f(\beta_1)<\beta_1$. If we had $f(\beta_1)>\beta_1$, this would contradict the fact that $\beta_0$ is the largest value for which $f(\beta_0) > \beta_0$. 

Under our assumption that there are no cyclic points in $[\beta_0, \beta_1]$ and $B$ is a cycle of period greater than 2, one can see that $f(\beta_1)=\beta_2<\beta_0$.
 
Therefore, 
$f([\beta_0, f(\beta_0)])\supset [\beta_1, \beta_2]$.

Therefore, by the Intermediate Value Theorem, there is a point $c \in [\beta_0, \beta_1]$, such that $f(c)=\beta_0$.

Hence, $\beta_0 \in f([\beta_0, f(\beta_0)])$. By Lemma \ref{lemma1}, there is a point of period 2.

Consider the second case.

For this case, by the assumption on $\beta_0$, one can see that

\begin{eqnarray*}
f(\beta_0) &=& \beta_1>\beta_0 \; \textbf{and} \; \beta_1 \in [\beta_0, \beta_1] \\
f(\beta_1) &=& \beta_2<\beta_1 \; \textbf{and} \; \beta_2 \in [\beta_0, \beta_1] \\
f(\beta_2) &=& \beta_3<\beta_2 \; \textbf{and} \; \beta_3 \in [\beta_0, \beta_1] \\
& \cdots &
\end{eqnarray*}

Since, $B$ is a cycle, in order to make sure that the loop is completed, there must exist a point $\beta_j \in [\beta_0, \beta_1]$, where $j \in \N$, such that $f(\beta_j) \leq \beta_0$.

Observe that both $\beta_1$ and $\beta_j$ are in 
$[\beta_0, \beta_1]$, 
which implies that $f([\beta_0, \beta_1]) \supset [\beta_0, \beta_1]$. Applying the Intermediate Value Theorem, one can see that there exists a point $c$ in  $[\beta_0, \beta_1]$, such that $f(c)=\beta_0$. 

Thus, in both cases we have shown that
there  exists a point $c$ in $[\beta_0, \beta_1]$, such that $f(c)=\beta_0$, 
which is equivalent to $\beta_0 \in f([\beta_0, f(\beta_0)])$. By Lemma \ref{lemma1}, there is a point of period $2$.
\end{proof}

\begin{corollary}
\label{cor1} 
If $f$ has a cycle of period $2^l$, with $l \geq 0$, then $f$ has cycle of periods $2^i$, with $i=0,1,2, \ldots, l-1$.
\end{corollary}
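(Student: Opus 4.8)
The plan is to deduce the Corollary by combining the fixed–point Theorem~\ref{fixed1} with a repeated application of Lemma~\ref{lemma2}, applied not to $f$ itself but to the iterates $g_k := f^{2^k}$. The one genuinely new ingredient is an arithmetic bookkeeping fact about prime periods: if $p$ has prime period $2^l$ under $f$, then for every $k$ with $0\le k\le l$ the point $p$ has prime period $2^{l-k}$ under $g_k$. Indeed $g_k^{\,2^{l-k}}(p)=f^{2^l}(p)=p$, so the prime period of $p$ under $g_k$ is a power of two, say $2^t$ with $t\le l-k$; if $t<l-k$ then $f^{2^{k+t}}(p)=g_k^{\,2^t}(p)=p$ with $2^{k+t}<2^l$, contradicting that $2^l$ is the prime period of $p$ under $f$. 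Hence $t=l-k$, and in particular each $g_k$ has a cycle of period $2^{l-k}$.

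Granting this, the argument runs as follows. First I would note that, since $f\colon I\to I$ is continuous on the closed bounded interval $I$, Theorem~\ref{fixed1} supplies a fixed point, i.e.\ a cycle of period $2^0=1$. Next, fix any $k$ with $0\le k\le l-2$. The map $g_k=f^{2^k}$ is again a continuous self-map of $I$, and by the fact above it has a cycle of period $2^{l-k}$, which is $>2$ because $l-k\ge 2$; Lemma~\ref{lemma2} then gives $g_k$ a cycle of period $2$, namely a point $x$ with $f^{2^{k+1}}(x)=x$ but $f^{2^k}(x)\ne x$. The prime period of $x$ under $f$ divides $2^{k+1}$, so it equals $2^s$ for some $s\le k+1$; if $s\le k$ we would get $f^{2^k}(x)=x$, a contradiction, so $s=k+1$ and $x$ generates a cycle of period $2^{k+1}$ for $f$. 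Letting $k$ run over $0,1,\ldots,l-2$ yields cycles of periods $2^1,2^2,\ldots,2^{l-1}$, and together with the fixed point this is precisely the required list $2^i$, $i=0,1,\ldots,l-1$.

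I expect no serious obstacle here: the whole content is the two small divisibility computations (prime periods of $p$ under $f$ versus under $f^{2^k}$, and the fact that a prime-period-$2$ point of $f^{2^k}$ has prime period $2^{k+1}$ under $f$) together with the routine observation that $f^{2^k}$ is a continuous self-map of $I$, so that Lemma~\ref{lemma2} and Theorem~\ref{fixed1} legitimately apply to it. The degenerate cases $l=0$ and $l=1$ require nothing extra, since then the range of $k$ is empty and the statement asks at most for the existence of a fixed point.
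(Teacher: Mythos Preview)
Your proof is correct and follows essentially the same route as the paper: pass to the iterate $g=f^{2^{i-1}}$ (your $g_k$ with $k=i-1$), observe it inherits a cycle of period $2^{l-i+1}>2$, apply Lemma~\ref{lemma2} to obtain a $2$-cycle for $g$, and lift this back to a $2^i$-cycle for $f$, with Theorem~\ref{fixed1} handling $i=0$. Your version is in fact a little more careful than the paper's, since you explicitly justify the two divisibility facts (that the prime period of $p$ under $g_k$ is exactly $2^{l-k}$, and that a prime-period-$2$ point of $g_k$ has prime period exactly $2^{k+1}$ under $f$), whereas the paper states the ``going backwards'' step somewhat loosely.
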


\begin{proof}

Since  $f:I \to I$, then   
by Theorem \ref{fixed1} there exists a fixed point, or a point of a period $1$, or for the sake of our proof, period $2^0$.

If $f$ has a period $2$, so that $l=1$, it is obvious that $f$ has a fixed point due to the same reason stated above, so that $f$ also has cycle of period $1$, or $2^0$.

The next case is for $l=2$, when $f$ has a cycle of period $2^2=4$.
Then,  by Lemma \ref{lemma2}, 
$f$ has a cycle of period $2$,
and by Theorem \ref{fixed1}, $f$ has a cycle of period $1$.

Assume that  $l \geq 3$. 
Let us introduce the new function $g$ defined as $g=f^{2^{i-1}}$, where $i$ is a positive integer such that $i<l$. The case for $i=1$ is governed by previous Lemma \ref{lemma2}.

Observe the following: if $f$ has cycle of period $k \in \N$ and $g=f^h$, where $h \in \N$ is a factor of $k$, then $g$ has a cycle of period $\displaystyle \frac{k}{h}$.

Similarly, knowing the cycle of a function composition, we can go backwards, so that if it is known that $g$ has a cycle of period $\displaystyle \frac{k}{h}$ and $g=f^h$, our conclusion that $f$ has a cycle of period $\displaystyle \frac{k}{h} \, h=k$ holds.

Since $f$ has a cycle of period $2^l$, then $g$ has a cycle of period:

\begin{eqnarray*}
\frac{2^l}{2^{i-1}}=2^{l-i+1}.
\end{eqnarray*}

Note that $l \geq 0$ and $i=1,2,\ldots, l-1$, therefore, $2^{l-i+1} >2$.

By Lemma \ref{lemma2}, $g$ has a cycle of period $2$.

Hence, $f$ also has a cycle of period $\displaystyle 2^{i-1} \cdot 2=2^{i-1+1}=2^i$. 
\end{proof}

\bigskip
\begin{mdframed}

\begin{example}
     \vspace{12pt}
    
    Let $f$ have a cycle of period $32$. By writing $32=2^5$, we know that $l=5$. By Corollary \ref{cor1},  $f$ should also have cycles of periods $2^4, 2^3, 2^2$. Period $2$ is given by Lemma \ref{lemma2}, and period $1$, by the fact that $f: I \to I$ as was shown in section \ref{fixedPts}.

    Let $i=4$, therefore $g=f^{2^3}=f^{8}$. Therefore, $g$ has a cycle of period $\displaystyle \frac{32}{8}=4$. 
Since $4>2$, by Lemma \ref{lemma2}, $g$ must have a cycle of period 2. Therefore, $f$ must have a cycle of period $8 \cdot 2=16$.
    
    By letting $i=2,3$ and repeating the argument shown above, one can see that $f$ indeed has cycles of periods $4,8$.

\end{example}
\end{mdframed}

\vspace{12pt}

\begin{corollary}
\label{cor2}
 If a map $f$ has a cycle of period $\neq 2^i$, with $i=0,1,2,\ldots$, then $f$ has cycles of periods $2^i$, with $i=0,1,2,\ldots$.

In other words, if $f$ has a cycle of period that is not a power of two, 
then $f$ has a cycle of period $2^i$ for every non-negative 
integer $i$.
\end{corollary}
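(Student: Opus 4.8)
The plan is to reduce everything to Lemma \ref{lemma2} and Corollary \ref{cor1} by means of the elementary arithmetic of periods under iteration. Write the given period, which by hypothesis is not a power of two, as $n = 2^{l} m$ with $l \ge 0$ and $m \ge 3$ odd (this decomposition was recalled in the Introduction). Throughout I would use the standard fact that if $x$ has prime period $N$ under $f$ and $h \ge 1$, then $x$ has prime period $N/\gcd(N,h)$ under $f^{h}$, which holds because $f^{ht}(x) = x$ iff $N \mid ht$ iff $(N/\gcd(N,h)) \mid t$.

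The core step is to produce cycles of every power of two exceeding $2^{l}$. Fix an integer $a \ge l$ and set $g_{a} = f^{2^{a}}$; since $f$ maps the closed bounded interval $I$ into itself, so does $g_{a}$. Take a point $x$ on the given $n$-cycle: its $f$-period is $2^{l}m$, and since $m$ is odd we have $\gcd(2^{l}m, 2^{a}) = 2^{l}$ for every $a \ge l$, so the $g_{a}$-period of $x$ equals $2^{l}m/2^{l} = m > 2$. Hence $g_{a}$ has a cycle of period larger than $2$, and Lemma \ref{lemma2} gives $g_{a}$ a cycle of period $2$. Let $y$ lie on that $2$-cycle of $g_{a} = f^{2^{a}}$; then $f^{2^{a+1}}(y) = y$, so the $f$-period of $y$ divides $2^{a+1}$ and is therefore $2^{j}$ for some $j \le a+1$. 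Writing the $g_{a}$-period of $y$ as $2^{j}/\gcd(2^{j},2^{a}) = 2$ forces $j > a$, hence $j = a+1$, so $f$ has a cycle of prime period $2^{a+1}$. Letting $a$ range over all integers $\ge l$ yields cycles of $f$ of periods $2^{l+1}, 2^{l+2}, 2^{l+3}, \dots$.

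To fill in the small powers of two, I would apply Corollary \ref{cor1} to the cycle of period $2^{l+1}$ just obtained: it produces cycles of $f$ of periods $2^{0}, 2^{1}, \dots, 2^{l}$. Combining the two ranges, $f$ has a cycle of period $2^{i}$ for every non-negative integer $i$, which is exactly the assertion.

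The routine but error-prone part, and the place to be careful, is the period bookkeeping in the second paragraph: one must verify that the $2$-cycle of $f^{2^{a}}$ lifts to a cycle of $f$ of prime period \emph{exactly} $2^{a+1}$ rather than some smaller power of two, and check the $\gcd$ computation that keeps the $g_{a}$-period of $x$ equal to $m$ for all $a \ge l$. Worth noting: this argument needs neither the Itinerary Lemma nor Theorem \ref{chaos}; it rests only on Lemma \ref{lemma2} (a cycle of period larger than $2$ forces a $2$-cycle) and Corollary \ref{cor1} (a $2^{l}$-cycle forces all smaller powers of two).
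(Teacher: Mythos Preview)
Your proof is correct and follows essentially the same route as the paper: pass to an iterate $g=f^{2^{i-1}}$, observe that $g$ inherits a cycle of period greater than $2$, apply Lemma~\ref{lemma2} to get a $2$-cycle of $g$, and lift it back to a $2^{i}$-cycle of $f$. The only cosmetic difference is that the paper runs this argument directly for every $i$ (splitting into the cases $i\le l$ and $i>l$), whereas you run it only for $i>l$ and then invoke Corollary~\ref{cor1} on the resulting $2^{l+1}$-cycle to obtain the smaller powers; your careful verification that the $2$-cycle of $f^{2^{a}}$ lifts to prime period exactly $2^{a+1}$ is in fact more explicit than the paper's.
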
 

\begin{proof}
Let us assume that $f$ has a cycle of period $2^l m$, where $l$ is a non-negative integer, and $m$ is an odd number, $m \geq 3$. 
\footnote{As observed earlier, any positive integer that is not a power of two can be written in this way.}

Let us consider two cases.

{\sc Case 1.}  $i$ is a positive integer with $ i \leq l$  (this can only happen if $l \ge 1$). 
As in the proof of Corollary \ref{cor1}, let us define
$g=f^{2^{i-1}}$. 
Then $g$ has a cycle of period
$$ { 2^l m \over 2^{i-1}} = 2^{l-i+1} m , $$
which is bigger than $2$ under our assumptions that $l \geq 1$ and $i$ is a positive integer. 

{\sc Case 2.}  $i$ is a positive integer with $i > l$.
Then $g$ has an orbit of period $m$, which is again bigger than $2$.

Thus, in both cases $g$ has a cycle of period bigger than $2$. Therefore, by 
Lemma \ref{lemma2},

the function $g$ has a $2$-cycle. But then $f$ has a cycle of period
$2 \cdot 2^{i-1} = 2^i$. 
\end{proof}

\bigskip
\begin{mdframed}

\begin{example}
     \vspace{12pt}
    
    Let $f$ have a cycle of period $56$, where $56$ can be uniquely written as $56=2^3 \cdot 7$.  Therefore, $l=3$. Following the idea of the proof, we distinguish 2 cases.
    
    Case 1 consists of $i \leq 3$.
    
    Case 2 governs $i>3$.

    By Lemma \ref{lemma2}, we know that $f$ has a cycle of period $2$ for $i=1$. 
    
    Let $i=2$ (Case 1). Therefore, we use $g=f^{2^{2-1}}=f^2$.
    
    Therefore, $g$ has a cycle of period:
    
    \begin{eqnarray*}
    \frac{2^3 \cdot 7}{2}=2^2 \cdot 7=28>2.
    \end{eqnarray*}
    
    If $g$ has a cycle of period $28$, by Lemma \ref{lemma2}, it has a cycle of period $2$.
    
    Therefore, $f$ has a cycle of period $2^{2-1} \cdot 2=2^2=4$.
    
    Let $i=5$ (Case 2). Let $g=f^{2^{5-1}}=f^{16}$.
    
    We cannot use the argument we used before, since
    
    \begin{eqnarray*}
    \frac{2^3 \cdot 7}{2^4}=\frac{7}{2} \not \in \N.
    \end{eqnarray*}
   
   But $16$ and $7$ are relatively prime, therefore $g$ must have a cycle of period $7>2$. Again, we apply Lemma \ref{lemma2}, so that $g$ has a cycle of period 2, it leads to the conclusion that $f$ has a cycle of period $2^4 \cdot 2=2^5=32$.

   Again, $g$ has a cycle bigger than $2$ and therefore, by Lemma \ref{lemma2} has a cycle of period $2$.
   
  It is convenient to show the existence of $2^i$-orbit in the table below.
    
    \begin{center}
\captionof{table}{Coexistence of cycles of periods $2^i$ for a $56$-cycle.}
\begin{tabular}{| c |c | c | c | c |}
\hline
  $i$        & Case & $g$-function & cycle of $g$ & cycle of $f$   \\ \hline
  $i=1$   & Case 1  & $g=f$  & $2^3 \cdot 7=56$    & $2^0 \cdot 2=2$  \\ \hline
  $i=2$   & Case 1  & $g=f^2$  & $2^2 \cdot 7=28$  &  $2^1 \cdot 2=4$\\ \hline
  $i=3$   & Case 1  & $g=f^4$  & $2^1 \cdot 7=14$  &  $2^2 \cdot 2=8$   \\ \hline
  $i=4$   & Case 2  & $g=f^8$  & $7$   &  $2^3 \cdot 2=16$\\ \hline
  $i=5$   & Case 2  & $g=f^{16}$ & $7$             &  $2^4 \cdot 2=32$ \\  \hline
\end{tabular}
\end{center}
    
    \end{example}
\end{mdframed}

\vspace{12pt}

\vspace{12pt}

\vspace{12pt} 

\section{Sharkovsky Theorem}

Finally, we are ready to discuss the Sharkovsky Theorem \cite{Shark1}. 

\vspace{12pt}

\begin{theorem}
\label{shark}
If $f: I \to I$ is a continuous map that has a cycle of period $m$, then $f$ has cycles of every period $m'$ such that $m \triangleright m'$, where
 \begin{eqnarray*}
& 3 \triangleright 5 \triangleright 7 \triangleright 9 \triangleright  \cdots\\
& 2 \cdot 3 \triangleright 2 \cdot 5 \triangleright 2 \cdot 7 \triangleright 2 \cdot 9 \triangleright \cdots \\
& 2^2 \cdot 3 \triangleright 2^2 \cdot 5 \triangleright 2^2 \cdot 7 \triangleright 2^2 \cdot 9 \triangleright 
                         \cdots\\ 
                         &  \cdots \\
& \cdots \triangleright 2^5 \triangleright 2^4 \triangleright 2^3 \triangleright 2^2 \triangleright 2 
                    \triangleright 1.
\end{eqnarray*}
\end{theorem}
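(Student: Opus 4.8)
The plan is to split according to the arithmetic type of the given period $m$ and to reduce everything to the case of an odd period, which carries all the content. Write $m = 2^{l} q$ with $q$ odd. If $q = 1$ then $m$ is a power of $2$ and the only $m'$ with $m \triangleright m'$ are smaller powers of $2$, which is exactly Corollary~\ref{cor1}; so assume from now on that $q \ge 3$.

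\emph{The odd case $l = 0$.} This is where the real work lies and what I expect to be the main obstacle. Suppose $f$ has a cycle of odd period $n \ge 3$; I would first reduce to the case that $n$ is the smallest odd period $> 1$ of $f$ (otherwise replace $n$ by that smaller odd period, which comes earlier in the Sharkovsky order and hence forces everything $n$ forces). The crucial combinatorial input, essentially due to Sharkovsky, is that a cycle of minimal odd period $n$ can be indexed as $x_{0}, x_{1}, \dots, x_{n-1}$ with $f(x_{i}) = x_{i+1 \bmod n}$ so that, up to a reflection, its points sit on the line in the spiral pattern
$$\cdots < x_{4} < x_{2} < x_{0} < x_{1} < x_{3} < x_{5} < \cdots ,$$
each iterate landing exactly one step past the previously reached extreme; proving that minimality forces this arrangement is the heart of the argument. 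Set $I_{1} = \langle x_{0}, x_{1} \rangle$ and let $I_{2}, \dots, I_{n-1}$ be the remaining intervals between consecutive cycle points. Taking images of endpoints, the spiral pattern forces
$$I_{1} \to I_{1}, \qquad I_{1} \to I_{2} \to I_{3} \to \cdots \to I_{n-1}, \qquad I_{n-1} \to I_{1}, I_{3}, I_{5}, \dots, I_{n-2} .$$
This graph contains: the self-loop at $I_{1}$ (length $1$); for each odd $r$ with $3 \le r \le n-2$ the loop $I_{r} \to I_{r+1} \to \cdots \to I_{n-1} \to I_{r}$ of length $n - r$, together running through every even value $2, 4, \dots, n-3$; the long loop $I_{1} \to I_{2} \to \cdots \to I_{n-1} \to I_{1}$ of length $n-1$; and, prepending $s \ge 0$ turns of the self-loop to the long loop, loops of every length $\ge n-1$. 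For a loop of any length $k \ne n$, Lemma~\ref{Itin1}(1) yields $\gamma$ with $f^{k}(\gamma) = \gamma$ following the loop, and Lemma~\ref{Itin1}(2) upgrades this to least period $k$ once one verifies its two hypotheses: that the interior of the initial interval of the loop misses all the other intervals of the loop (immediate, since the $I_{j}$ lie on the two sides of $I_{1}$ with pairwise disjoint interiors), and that no point of the $n$-cycle follows the loop (a short endpoint chase — a cycle point in the initial interval is one of its endpoints, and tracking its orbit one sees the prescribed itinerary breaks, or else the closing-up condition fails because that point has period $n \ne k$). With the $n$-cycle we already have, this produces cycles of every period in $\{1\} \cup \{\text{even numbers}\} \cup \{\text{odd numbers} > n\}$, which is precisely $\{m' : n \triangleright m'\}$ (and for $n = 3$ recovers Theorem~\ref{chaos}).

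\emph{The mixed case $l \ge 1$, $q \ge 3$.} Here I would induct on $l$, the base $l = 0$ being the odd case. Since $2^{l} \mid 2^{l} q$, the map $g := f^{2^{l}}$ has a cycle of odd period $q$, so by the odd case $g$ has cycles of every period following $q$. The numbers $m'$ with $2^{l} q \triangleright m'$ come in three kinds. Powers of $2$: these follow from Corollary~\ref{cor2}, as $2^{l} q$ is not a power of $2$. Numbers $m' = 2^{a} b$ with $a > l$ and $b$ odd, $b \ge 3$: then $g$ has a cycle of period $2^{a-l} b$ (which follows $q$ since $a - l \ge 1$), and because $a > l$, comparing the prime period of such a point under $f$ with its prime period under $g = f^{2^{l}}$ forces its $f$-period to be exactly $2^{a} b$. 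Numbers $m' = 2^{l} q'$ with $q' > q$ odd: then $g$ has a $q'$-cycle, a point of which has $f$-period $2^{j} q'$ for some $j \le l$; if $j = l$ we are done, and if $j < l$ then, as $2^{j} q' \triangleright 2^{l} q'$, the inductive hypothesis applied to $2^{j} q'$ delivers the desired $2^{l} q'$-cycle. Combining the three kinds completes the induction.

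The only genuinely difficult ingredient is the existence of the spiral arrangement of a minimal odd cycle together with the resulting shape of the covering graph; everything else is bookkeeping — repeated application of the Itinerary Lemma to convert loops into cycles of a prescribed least period, and the elementary arithmetic relating the periods of $f$ to those of its iterates $f^{h}$.
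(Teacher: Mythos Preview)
Your argument is correct, but the route you take for the odd case is genuinely different from the paper's.

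For an odd period $n>1$, you pass to the \emph{smallest} odd period and invoke the Stefan spiral pattern, from which the covering graph $I_1\to I_1$, $I_1\to I_2\to\cdots\to I_{n-1}$, $I_{n-1}\to I_1,I_3,\dots,I_{n-2}$ is read off directly. The paper instead works with an \emph{arbitrary} odd cycle and its permutation $\pi$: Lemmas~\ref{lemma3} and~\ref{lemma4}, together with the fact (Lemma~\ref{sublemma_6}) that every odd permutation lies in the class $\mathfrak A$, produce a closed loop of unit segments $|i_0,*|\to|i_r,*|\to\cdots\to|i_1,*|\to|i_0,*|$ with a self-loop at $|i_0,*|$, and a second graph for $A_\pi^2$ handling the small even periods. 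Your approach buys a very explicit covering graph once the spiral structure is granted, at the cost of the extra reduction to minimal odd period and the nontrivial combinatorial lemma establishing that structure; the paper's approach avoids the minimality reduction entirely, at the cost of the more abstract machinery of the operator $A_\pi$ and the class $\mathfrak A$.

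For the mixed case $m=2^l q$ with $l\ge 1$, both you and the paper pass to $g=f^{2^l}$. One point worth noting: your induction on $l$ to handle $m'=2^l q'$ with $q'>q$ odd is actually more careful than the paper's Lemma~\ref{lemma7}, which simply asserts that a $q'$-cycle for $g$ yields a $2^l q'$-cycle for $f$. As you correctly observe, a priori one only obtains $f$-period $2^j q'$ for some $j\le l$, and the case $j<l$ must be fed back through the induction; your treatment closes this small gap.
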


The order of the natural numbers provided by the relation ``$\, \triangleright$'' is called
{\it the Sharkovsky ordering}. 

The heart of the proof of Theorem \ref{shark} will be the 
Lemmas \ref{lemma3} -- \ref{lemma7} considered below.


We begin with some definitions and examples.

For any cycle 
of the function $f$ 
there exists a cyclic permutation, which we denote as $\pi$, a transfer matrix, and an oriented graph, which can be constructed as follows.

Assume that there exists a cycle $B$, that consists of the points $\beta_i$ where $1\leq i \leq m$, none of them equal to each other. Therefore, we can order them so that

\begin{eqnarray*}
\beta_1< \beta_2 < \beta_3 < \cdots< \beta_m.
\end{eqnarray*}

Let the image of a point $\beta_i$ be $f(\beta_i)=\beta_{s_i}$, then the permutation is defined as follows:

\begin{eqnarray*}
\pi = \pi_B = 
\begin{bmatrix}
1 & 2 & 3 & \ldots & m \\
s_1 & s_2 & s_3 & \ldots & s_m
\end{bmatrix}.
\end{eqnarray*}

The set of the next examples in this section will be based on the following cycle of period $4$.

\begin{figure}[ht!]
\centering
\includegraphics[width=90mm]{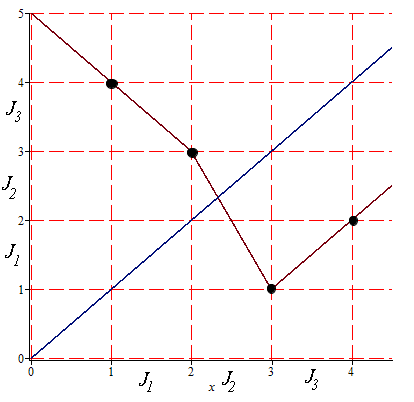}
\caption{Cycle of period 4.}
\label{4cycle}
\end{figure}

The dynamics of the points is shown below

\begin {center}
\begin {tikzpicture}[-latex ,auto ,node distance =2 cm and 3cm ,on grid ,
semithick ,
state/.style ={ circle ,top color =white , bottom color = processblue!20 ,
draw,processblue , text=blue , minimum width =0.4 cm}]
\node[state] (A)
{$1$};
\node[state] (B) [right=of A] {$2$};
\node[state] (C) [right =of B] {$3$};
\node[state] (D) [right =of C] {$4$};
\path (A) edge [bend right = -25] node[right] {} (D);
\path (D) edge [bend left =25] node[below =0.15 cm] {} (B);
\path (B) edge [bend right = -15] node[below =0.15 cm] {} (C);
\path (C) edge [bend left =25] node[above] {} (A);
\end{tikzpicture}
\end{center}

\bigskip
\begin{mdframed}

\begin{example}
\vspace{12pt}

Consider a continuous function with the following properties: 

$f(1)=4, f(2)=3, f(3)=1, f(4)=2$, 

which is shown in Figure \ref{4cycle}. 

We can order the points as follows: $1<2<3<4$, the image of the points will be $4, 3, 1, 2$ respectively.

Therefore, the permutation will be 

\begin{eqnarray*}
\pi =
\begin{bmatrix}
1 & 2 & 3 & 4 \\
4 & 3 & 1 & 2 \\
s_1 & s_2 & s_3 & s_4
\end{bmatrix}.
\end{eqnarray*}

Such representation of a cycle can be very helpful as we will show later.

\end{example}
\end{mdframed}

\bigskip\noindent

Let $J_i$ be the interval 
$J_i=[\beta_i, \beta_{i+1}]$, with $i=1,\ldots, m-1$.
Due to continuity of $f$, the image of $J_i$ satisfies the following:

\begin{displaymath}
   f( J_{\displaystyle i}) \supseteq \left\{
     \begin{array}{lr}
       J_{\displaystyle s_i} \cup \ldots \cup J_{\displaystyle s_{ i+1}-1} & : s_{\displaystyle i} < s_{\displaystyle i+1}\\
       J_{\displaystyle s_{i+1}} \cup \ldots \cup J_{\displaystyle s_{i}-1}& : s_{\displaystyle i} > s_{\displaystyle i+1}
     \end{array}
   \right.
\end{displaymath}

\vspace{12pt}
\begin{mdframed}

\begin{example}
\vspace{12pt}

Continue with the same function (Figure \ref{4cycle}).

\begin{eqnarray*}
\pi =
\begin{bmatrix}
\beta_1 & \beta_2 & \beta_3 & \beta_4 \\
1 & 2 & 3 & 4 \\
4 & 3 & 1 & 2 \\
s_1 & s_2 & s_3 & s_4
\end{bmatrix}.
\end{eqnarray*}

Here we have $m-1 = 3$ intervals, to be precise: $J_1=[1,2], J_2=[2,3], J_3=[3,4]$.

For the interval $J_1$ we have $s_1, s_2$ to be $4,3$ respectively, then $s_1>s_2$.

Therefore, $f(J_1) \supset J_3$.

For the interval $J_2$, from the permutation $\pi$, one can see that $s_2=3, s_3=1$, then $s_2>s_3$.

Hence, $f(J_2) \supset J_1 \cup J_2$.

The image of $J_3$ is different: because, $s_3=1, s_4=2$, then $s_3<s_4$.

Therefore, $f(J_3) \supset J_1$.
\end{example}
\end{mdframed}

As defined in Section \ref{defin}, recall that 
if $f(J_i) \supset J_{s_i}$, we say that $J_i$ covers $J_{s_i}$, 
and write $J_i \to J_{s_i}$.

To any cycle of period $m$ we will associate 
its {\it transition matrix}, which is 
an $n\times n$-matrix, 
where $n = m-1$ is the number of the intervals constituting the cycle.
We define the entries of the transition matrix as $\{\mu_{is}\}$, where

\begin{displaymath}
   \mu_{is} = \left\{
     \begin{array}{lr}
       0 & :  f(J_i) \not \supset J_s,\\
       1 & :  f(J_i) \supset J_s.
     \end{array}
   \right.
\end{displaymath} 

Finally, to a cycle, or a transition matrix, we can also associate an oriented graph,
as shown in the following example.

\vspace{12pt}
\begin{mdframed}

\begin{example}
\vspace{12pt}

Continue with the same function, showed in the previous example.

Here we have $m-1$ intervals, to be precise: $J_1=[1,2], J_2=[2,3], J_3=[3,4]$.

From  Figure \ref{4cycle}, we can see that  

$$f(J_1)=f([1,2]) \supset J_3$$
$$f(J_2)=f([2,3])\supset J_1 \cup J_2$$
$$f(J_3)=f([3,4]) \supset J_1 . $$  

Therefore, a transition matrix is

\begin{eqnarray*}
\pi =
\begin{bmatrix}
0 & 0 & 1 \\
1 & 1 & 0 \\
1 & 0& 0 \\
\end{bmatrix}.
\end{eqnarray*}

The associated graph looks as follows: 

\begin {center}
\begin {tikzpicture}[-latex ,auto ,node distance =2 cm and 3cm ,on grid ,
semithick ,
state/.style ={ circle ,top color =white , bottom color = processblue!20 ,
draw,processblue , text=blue , minimum width =0.4 cm}]
\node[state] (A)
{$J_1$};
\node[state] (B) [right=of A] {$J_3$};
\node[state] (C) [right =of B] {$J_2$};
\path (A) edge [bend right = -15] node[right] {} (B);
\path (B) edge [bend left =15] node[below =0.15 cm] {} (A);
\path (C) edge [bend left = -25] node[below =0.15 cm] {} (A);
\path (C) edge [loop right =25] node[above] {} (C);
\end{tikzpicture}
\end{center}

\end{example}
\end{mdframed}

Given natural numbers $i_1 < i_2$,
the segment of the natural number line contained between $i_1$ and $i_2$, or in set builder notation
\begin{eqnarray*}
\{i \in \N \vert i_1 \leq i \leq i_2\} ,
\end{eqnarray*}
will be denoted as $|i_1, i_2|$.

If $i_2=i_1+1$, then for $|i_1, i_2|$ we will use the special notation $|i_1, *|$.

Given a cyclic permutation $\pi$ of length $n$, let us define the function (operator, map) $A_{\pi}$ as follows. The action of $A_{\pi}$ on a segment $|i_1, i_2| \subset |1,n|$ is:

\begin{eqnarray*}
A_{\pi} |i_1, i_2| =|\underbrace{\min \pi(i)}_{i \in |i_1, i_2|}, \underbrace{\max \pi(i)}_{i \in |i_1, i_2|}|.
\end{eqnarray*}

Further, in order to illustrate some of the statements, we are going to use the following function, which has a cycle of period $8$ (See Figure \ref{cycle8}).

\begin{figure}[ht!]
\centering
\includegraphics[width=80mm]{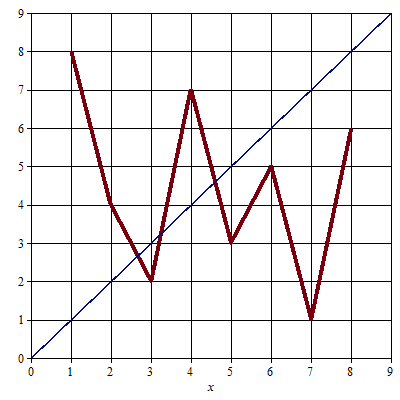}
\caption{Cycle of period 8}
\label{cycle8}
\end{figure}

It is important to understand how the operator $A_{\pi}$ works. Consider the example below.

\vspace{4cm}

\begin{mdframed}

\begin{example}
\vspace{12pt}

Consider the cycle of period $8$, depicted in the Figure \ref{cycle8}, whose permutation $\pi$ is

\begin{eqnarray*}
\pi =
\begin{bmatrix}
1 & 2 & 3 & 4 & 5 & 6 & 7 & 8\\
8 & 4 & 2 & 7 & 3 & 5 & 1 & 6 \\
\end{bmatrix}.
\end{eqnarray*}

Let us pick the interval $|3, 4|$ and investigate the image of its endpoints. 

From Figure  
\ref{cycle8}, one can see that the image of  $3$ is $f(3)=2$, similarly, the image of $4$ is $f(4)=7$ or from the permutation $\pi$:

\begin{center}

\begin{tikzpicture}
\matrix[matrix of math nodes, row sep=4mm] (M) {
 1 & 2 & \color{blue} 3 & \color{blue}4 & 5 & 6 & 7 & 8  \\
 8 & 4 & \color{blue}2 & \color{blue}7 & 3 & 5 & 1 & 6  \\
 };

\draw[my arrow] (M-1-3) to (M-2-3);
\draw[my arrow] (M-1-4) to (M-2-4);
\end{tikzpicture}
\end{center}

We got a set of $2$ elements $\{2,7\}$. Note that there are no numbers other than $2$ and $7$.
Therefore, the minimum is  $\min \{2,7\}=2$ and maximum is $\max\{2,7\}=7$.

Let us pick another interval, $|3,7|$, and study the images of its endpoints. Note that $3$ goes to $2$, while $7$ is sent to $1$.

\begin{center}

\begin{tikzpicture}
\matrix[matrix of math nodes, row sep=4mm] (M) {
 1 & 2 & \color{blue} 3 & 4 & 5 & 6 & \color{blue} 7 & 8  \\
 8 & 4 & \color{blue}2 & \color{blue}7 & \color{blue}3 & \color{blue}5 & \color{blue}1 & 6  \\
 };

\draw[my arrow] (M-1-3) to (M-2-3);
\draw[my arrow] (M-1-7) to (M-2-7);
\end{tikzpicture}
\end{center}

But there are some numbers between $2$ and $1$. Therefore, we get the set consisting of $5$ elements---$\{2,7,3,5,1\}$. 

The minimum is now 
$\min\{2,7,3,5,1\}=1$ and the maximum is $\max\{2,7,3,5,1\}=7$. Hence, the image of $|3,7|$ under $A_{\pi}$ is $|1, 7|$.

\end{example}
\end{mdframed}

In particular, for a map $f: I \to I$, with $n$ cycle  $\beta_1 < \beta_2< \cdots < \beta_n$ and associated permutation $\pi$, the action of $A_{\pi}$ is connected with $f$ as follows:

\begin{eqnarray}
\label{ftopi}
\text{\; if\;\;} A_{\pi}|i_1, i_2|=|i'_1, i'_2| \text{\; then\;\;} f([\beta_{i_1}, \beta_{i_2}]) \supset [\beta_{i'_1}, \beta_{i'_2}].
\end{eqnarray}

Also, 
for any cyclic permutation $\pi$ of length $n$ and any positive integers $i_1 < i_2$, the map
$A_{\pi}$ has the following properties:

\begin{enumerate}
\label{features}
\item $\mathbf{card} A_{\pi} |i_1, i_2| \geq \mathbf{card} |i_1, i_2|$,

\item $A_{\pi}|i_1, i_2| \subset |i_1, i_2|$ if and only if $|i_1, i_2| = |1, n|$,

\item If $|i_1, i_2| \subset |j_1, j_2|$, then $A_{\pi}|i_1, i_2| \subset A_{\pi}|j_1, j_2|$.
\end{enumerate}

Since these properties are essential for the proof of the Theorem, 
we decided to include justifications for all of them.

\begin{proof} 

\begin{itemize}

\item[(1)]

Since a cyclic permutation is a one-to-one map, the cardinality of the set 
$Y = \{ \pi(i) \, \vert \, i_1 \le i \le i_2\} $
is the same as that of $\vert i_1, i_2 \vert$. 
 
Hence, the cardinality of $A_{\pi} |i_1, i_2|  = | \min Y, \max Y|$ 
is at least as big as the cardinality of $\vert i_1, i_2 \vert$.

\bigskip
\bigskip

\item[(2)] 
If $\vert i_1, i_2 \vert = \vert 1, n \vert $ then clearly
$A_\pi \vert 1, n \vert \subset \vert 1, n \vert$. 
Conversely, let us assume that $A_{\pi}|i_1, i_2| \subset |i_1, i_2|$,
or $\vert \min Y,  \max Y \vert  \subset \vert i_1, i_2 \vert$,
where we have again called
$Y = \{ \pi(i) \, \vert \, i_1 \le i \le i_2\} $. 
This means that 
$$ i_1 \leq \min Y < \max Y \leq i_2 . $$
Since we have already proved that the cadinality of $\vert \min Y, \max Y \vert$ is no less than 
that of $\vert i_1, i_2 \vert$, we conclude that both sets have the same cardinality. In
particular, $\min Y = i_1$ and $\max Y = i_2$. Hence, $A_\pi$ sends the segment $\vert i_1, i_2 \vert$
onto itself. Since $A_\pi$ is by assumption a cyclic permutation of length $n$, this can only happen
when the cardinality of $\vert i_1, i_2 \vert$ is $n$, that is, when $\vert i_1, i_2 \vert = \vert 1, n \vert$.

\item[(3)]
Assume that  $|i_1, i_2| \subset |j_1, j_2|$, 
Then
$Y \subset Z$, where 
$$ Y = \{ \pi(i) \, \vert \, i_1 \le i \le i_2\}  \qquad\hbox{ and } \qquad
Z = \{ \pi(j) \, \vert \, j_1 \le j \le j_2\} . $$

Hence, $\min Z \le \min Y < \max Y \le \max Z$,
from which the inclusion 
$A_{\pi}|i_1, i_2| \subset A_{\pi}|j_1, j_2|$ follows.

\end{itemize}
\end{proof}

\vspace{12pt}

It is convenient to adopt the following notation $|i_1, i_2| \rightarrow |i'_1, i'_2|$ meaning $A_{\pi}|i_1, i_2| \supset |i'_1, i'_2|$.

Let $A^k_{\pi}=\underbrace{A_{\pi} \circ \cdots A_{\pi}}_{k-\text{times}}$.

In order to show the proof of the following Lemmata, for a given cyclic permutation $\pi$ we will need a special point, which we will call $i_0$, and is defined as follows:

\begin{equation}
\label{i0}
i_0=\max \left\{ i \in |1, n-1| \big \vert \pi(i)>i \right\}.
\end{equation}

\begin{mdframed}

\begin{example}
\vspace{12pt}

Consider the cycle of period $8$, whose permutation $\pi$ is

\begin{eqnarray*}
\pi =
\begin{bmatrix}
1 & 2 & 3 & 4 & 5 & 6 & 7 & 8\\
8 & 4 & 2 & 7 & 3 & 5 & 1 & 6 \\
\end{bmatrix}.
\end{eqnarray*}

By the definition of $i_0$, we need to find \textbf{the largest} $i$ such that $\pi(i)>i$. 

Observe that  the $i$'s are in the first row, while  the
$\pi(i)$ are in the second row. 

There are three pairs for which $\pi(i)>i$.

\begin{eqnarray*}
\pi =
\begin{bmatrix}
\color{blue} 1 & \color{blue} 2 & 3 & \color{blue} 4 & 5 & 6 & 7 & 8\\
\color{blue} 8 & \color{blue} 4 & 2 & \color{blue} 7 & 3 & 5 & 1 & 6 \\
\end{bmatrix}.
\end{eqnarray*}

Therefore, the set of such $i$'s is $\{1,2,4\}$,
and hence  $i_0=\max\{1,2,4\}=4$.

\end{example}
\end{mdframed}

Let us introduce out next result. 

\vspace{18pt}

\begin{lemma}
\label{lemma3}

Let $\pi$ be a cyclic permutation of length $n$ with $n>2$.

\begin{enumerate}
\item There exists $i_0 \in |1, n-1|$ and a natural number $k$ with $1 \leq k \leq n-2$ such that 

$$|i_0, *| \subset A_{\pi}|i_0, *| \subset A^2_{\pi}|i_0, *| \subset \cdots \subset A^k_{\pi}|i_0, *|=|1,n|.$$

\item For any $i_1 \in |1, n-1|$ with $i_1 \neq i_0$, there exist elements $i_j$ of the set $|1, n-1|$ with 
$j=2,3, \ldots, r$, where $2 \leq r \leq k$, such that the $i_j$, $0 \leq j \leq r$, are pairwise distinct, and

$$|i_0, *| \rightarrow |i_r, *| \rightarrow |i_{r-1}, *| \rightarrow \cdot \rightarrow|i_j, *| \rightarrow 
\cdots \rightarrow |i_1, *|.$$
\end{enumerate}

\end{lemma}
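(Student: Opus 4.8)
The plan is to extract everything from the three structural properties of $A_\pi$ stated just before the lemma, plus the defining maximality of $i_0$. Throughout, recall that $|i,*|$ means the two-element segment $|i,i+1|$, and that $A_\pi|i_1,i_2| = |\min_{i\in|i_1,i_2|}\pi(i),\ \max_{i\in|i_1,i_2|}\pi(i)|$.

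First I would establish part (1). The key observation is that $|i_0,*|\subset A_\pi|i_0,*|$: since $\pi(i_0)>i_0$, we have $\pi(i_0)\geq i_0+1$; and since $i_0$ is the largest index in $|1,n-1|$ with $\pi(i)>i$, the value $\pi(i_0+1)$ satisfies $\pi(i_0+1)\leq i_0+1$ (here I should handle the edge case $i_0+1=n$ separately, but $\pi(i_0+1)\le i_0$ unless $i_0+1$ is a fixed point, which cannot occur since $\pi$ is a cyclic permutation of length $n>2$, so in fact $\pi(i_0+1)\le i_0$). Hence $\min\{\pi(i_0),\pi(i_0+1)\}\le i_0$ and $\max\{\pi(i_0),\pi(i_0+1)\}\ge i_0+1$, giving $A_\pi|i_0,*|\supset|i_0,i_0+1|=|i_0,*|$. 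Now iterate: by property (3) (monotonicity of $A_\pi$ under inclusion), $|i_0,*|\subset A_\pi|i_0,*|$ implies $A_\pi|i_0,*|\subset A_\pi^2|i_0,*|$, and inductively $A_\pi^j|i_0,*|\subset A_\pi^{j+1}|i_0,*|$. So the sets $A_\pi^j|i_0,*|$ form an increasing chain of subsegments of $|1,n|$. By property (1), $\mathbf{card}\,A_\pi^{j+1}|i_0,*|\ge\mathbf{card}\,A_\pi^j|i_0,*|$; I claim the cardinality strictly increases as long as $A_\pi^j|i_0,*|\ne|1,n|$. Indeed, if $A_\pi^{j+1}|i_0,*| = A_\pi^j|i_0,*|$ for some proper subsegment, then $A_\pi$ maps that segment $S$ into itself, i.e. $A_\pi S\subset S$ with $S\ne|1,n|$, contradicting property (2). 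Since cardinalities strictly increase from $2$ until the chain reaches $|1,n|$ (which has cardinality $n$), the chain stabilizes at $|1,n|$ after at most $n-2$ steps; let $k\le n-2$ be the first index with $A_\pi^k|i_0,*|=|1,n|$, and note $k\ge1$. This gives the chain in part (1).

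Next, part (2). Fix $i_1\in|1,n-1|$ with $i_1\ne i_0$; I want a "descending path" of two-element segments from $|i_0,*|$ down to $|i_1,*|$ in the covering relation $|a,*|\to|b,*|$, i.e. $A_\pi|a,*|\supset|b,*|$. The idea is to run the chain from part (1) backwards. Since $A_\pi^k|i_0,*| = |1,n|\supset|i_1,*|$, and more generally each $A_\pi^j|i_0,*|$ is a segment containing $|i_0,*|$, I want to show: for each segment $|c,d|$ appearing as some $A_\pi^j|i_0,*|$ with $j\ge1$ and each two-element subsegment $|i,*|\subset|c,d|$ with $i\ne i_0$, there is a two-element subsegment $|i',*|\subset A_\pi^{j-1}|i_0,*|$ with $A_\pi|i',*|\supset|i,*|$. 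Granting this, I start at $j=k$ with the segment $|i_r,*|:=$ some two-element piece reaching toward $|i_1,*|$ — more carefully, I should build the path index by index, decreasing $j$, choosing at each stage an $i_{j}\in|1,n-1|$ so that $|i_j,*|\subset A_\pi^{k-j}|\cdots|$ appropriately and $|i_{j+1},*|\to|i_j,*|$, terminating when I can reach $|i_1,*|$. The reachability step itself is the content of Lemma~\ref{it_lemma}-style reasoning translated to the combinatorial operator: if $A_\pi|i',*| = |\alpha,\beta|\supset|i,i+1|$, then since $A_\pi|i',*|$ is obtained from the two values $\pi(i'),\pi(i'+1)$ by filling in between them, and $\{i,i+1\}\subset[\alpha,\beta]$, one checks directly that some two-element subsegment of $|i',*|$ — in fact $|i',*|$ itself, or we trace which of the two endpoints' images straddle $i$ — maps onto a segment covering $|i,*|$; this is where I'd use an argument parallel to the two cases in the proof of Lemma~\ref{it_lemma}. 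The distinctness of the $i_j$ is then arranged by taking a path of \emph{minimal} length $r\le k$: if some index repeated, we could excise the loop and shorten the path, so a minimal path has pairwise distinct $i_0,i_1,\ldots,i_r$.

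The main obstacle I anticipate is part (2): specifically, setting up the backward-tracing so that the chosen two-element segments $|i_j,*|$ genuinely form a covering chain $|i_0,*|\to|i_r,*|\to\cdots\to|i_1,*|$ with all indices distinct and $r$ controlled by $k$. The forward chain in part (1) is about \emph{containments} $A_\pi^j|i_0,*|\subset A_\pi^{j+1}|i_0,*|$, whereas part (2) needs \emph{coverings} between individual two-element segments, and the passage from "$|i_1,*|$ lies inside the big segment $A_\pi^k|i_0,*|$" to "there is a literal sequence of elementary coverings of two-element segments" requires, at each step, a careful choice of which adjacent pair $\{i',i'+1\}$ has images spanning the target pair $\{i,i+1\}$ — essentially the combinatorial shadow of the Itinerary Lemma. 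Establishing that this choice can always be made (using that $\pi$ is a cyclic permutation, so no fixed points, and using property (1) to guarantee enough "spread") and that minimality of the path forces distinctness is the technical heart; the rest is bookkeeping with the three properties already proved.
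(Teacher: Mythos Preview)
Your Part (1) is correct and is exactly the paper's argument: define $i_0$ by maximality, verify $|i_0,*|\subset A_\pi|i_0,*|$ from $\pi(i_0)\ge i_0+1$ and $\pi(i_0+1)\le i_0$, iterate using monotonicity (property 3), and invoke property 2 to force the chain to hit $|1,n|$ within $n-2$ steps.

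For Part (2) your overall strategy---trace backwards through the chain $A_\pi^j|i_0,*|$---is the same as the paper's, but two points deserve sharpening.

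First, the ``reachability step'' is much simpler than an analogue of Lemma~\ref{it_lemma}. All you need is: if $A_\pi|c,d|\supset|i,i+1|$, then among $\pi(c),\pi(c+1),\ldots,\pi(d)$ some value is $\le i$ and some is $\ge i+1$, so there exist adjacent indices $i',i'+1\in|c,d|$ whose $\pi$-values lie on opposite sides of the gap $i,i+1$; then $A_\pi|i',*|\supset|i,*|$. This is a one-line discrete intermediate value argument, and the paper treats it as such (``at least one sub-segment of consecutive integers of that segment must also cover $|i_1,*|$''). Invoking the two-case structure of Lemma~\ref{it_lemma} overcomplicates it.

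Second, your distinctness mechanism (take a minimal-length path and excise loops) is different from the paper's, and it leaves the bound $r\le k$ unaccounted for: minimality alone only bounds $r$ by $n-1$. The paper instead attaches to each $i_s$ its \emph{first-appearance level} $j_s=\min\{j:|i_s,*|\subset A_\pi^{j}|i_0,*|\}$, and arranges the backward step so that $i_{s+1}$ is chosen inside $A_\pi^{j_s-1}|i_0,*|$, forcing $j_{s+1}<j_s$. The strict decrease of the $j_s$ from $j_1\le k$ down to $0$ (where $i_0$ sits) gives both distinctness (each $i_s$ lies in the shell $A_\pi^{j_s}|i_0,*|\setminus A_\pi^{j_s-1}|i_0,*|$, and distinct shells are disjoint) and the bound $r\le k$ in one stroke. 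Your minimality trick would work too, but only once you have observed that the backward construction already produces a path of length at most $k$; you should make that explicit rather than treat $r\le k$ as automatic.
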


Before we move to the proof of first part of the Lemma \ref{lemma3}, let us introduce an example showing what this part of the Lemma states.

\vspace{12pt}
\begin{mdframed}

\begin{example}

\vspace{12pt}

Given the cycle of period 8

\begin {center}
\begin {tikzpicture}[-latex ,auto ,node distance =1 cm and 1.5 cm ,on grid , 
semithick ,
state/.style ={ circle ,top color =white , bottom color = processblue!20 ,
draw,processblue , text=blue , minimum width =0.4 cm}]
\node[state] (A)
{$1$};
\node[state] (B) [right=of A] {$2$};
\node[state] (C) [right =of B] {$3$};
\node[state] (D) [right =of C] {$4$};
\node[state] (E) [right =of D] {$5$};
\node[state] (F) [right =of E] {$6$};
\node[state] (G) [right =of F] {$7$};
\node[state] (H) [right =of G] {$8$};
\path (A) edge [bend right = -25] node[right] {} (H);
\path (B) edge [bend right =-25] node[right] {} (D);
\path (C) edge [bend left = 25] node[below =0.15 cm] {} (B);
\path (D) edge [bend right =-25] node[right] {} (G);
\path (E) edge [bend left = 25] node[below =0.15 cm] {} (C);
\path (F) edge [bend left = 25] node[below =0.15 cm] {} (E);
\path (G) edge [bend left = 25] node[below =0.15 cm] {} (A);
\path (H) edge [bend left = 25] node[below =0.15 cm] {} (F);
\end{tikzpicture}
\end{center}

The permutation $\pi$ is

\begin{eqnarray*}
\pi =
\begin{bmatrix}
1 & 2 & 3 & 4 & 5 & 6 & 7 & 8\\
8 & 4 & 2 & 7 & 3 & 5 & 1 & 6 \\
\end{bmatrix}.
\end{eqnarray*}

Let $i_0=4$, then $|i_0,*|=|4, 4+1|=|4,5|$. After applying the operator $A_{\pi}$, we get:

\begin{eqnarray*}
|4,5| \xrightarrow {A_{\pi}} |3,7|.
\end{eqnarray*}

After applying $A_{\pi}$ one time, the image does not contain the whole interval, or

\begin{eqnarray*}
|3,7| \neq |1,8|.
\end{eqnarray*}

Therefore, iterations under $A_{\pi}$ can be continued.

Further,

\begin{eqnarray*}
|3, 7| \xrightarrow {A_{\pi}} |1,7| \text{\; or equivalently\;\;} |4, 5| \xrightarrow {A^2_{\pi}} |1,7|.
\end{eqnarray*}

Still $|1,7| \neq |1,8|$.

Apply $A_{\pi}$ to $|1,7|$

\begin{eqnarray*}
|1, 7| \xrightarrow {A_{\pi}} |1,8| \text{\; or equivalently\;\;} |4, 5| \xrightarrow {A^3_{\pi}} |1,8|.
\end{eqnarray*}
The last interval is already $|1, 8| = |1, n|$ in this case. 

We conclude that for $i_0=4$, we have $k=3$.

Moreover,

\begin{eqnarray*}
|4,5| \subset |3,7| \subset |1,7| \subset |1,8|.
\end{eqnarray*}

\end{example}
\end{mdframed}

\begin{proof}
In order to prove the first part of the lemma, we define $i_0$ again by
(\ref{i0}), that is, 

\begin{eqnarray*}
i_0=\max \left\{ i \in |1, n-1| \big \vert \pi(i)>i \right\}.
\end{eqnarray*}

For any cycle of period $n>2$, one can see that $\pi (1) >1$. The inequality $\pi(1)<1$ is not possible because $i \in \N$, and the equality $\pi(1)=1$ will close the cycle to a length of $1$, which is against our assumption $n>2$. 

On the other end of the cycle, we have $\pi(n)<n$. If a cycle has  length $n$, then inequality $\pi(n)>n$ contradicts this assumption. On the other hand $\pi(n)=n$ leads to the existence of a fixed point, which is not a cycle of period $n>2$.

Therefore, $i_0 \in |1, n-1|$.

\vspace{12pt}

Consider the interval $|i_0, *|=|i_0, i_0+1|$. By the way we defined $i_0$, one can see that $\pi(i_0)>i_0$.

On the other hand, $\pi(i_0+1)<i_0+1$. If we had $\pi(i_0+1)>i_0+1$, then $i_0$ would not be the maximum point with this property, contradicting the definition of $i_0$. If $\pi(i_0+1)=i_0+1$, then we have a fixed point and it leads to a contradiction that we have a cycle of length $n>2$.

Note  that if $\pi(i_0+1)= i_0$, then iteration of the second point completes the loop and it implies that the cycle is of period $2$, which contradicts our assumption that $n>2$. Therefore, we are left with one possibility: $\pi(i_0+1)< i_0$, hence, $\pi(i_0+1)< i_0<i_0+1 \leq \pi(i_0)$.

The reasons above lead to the conclusion that

\begin{equation}
\label{covers}
|i_0, *| \subseteq A_{\pi}|i_0, *|.
\end{equation}

Then, by property $3$ of the operator $A_{\pi}$,

\begin{eqnarray*}
A_{\pi}|i_0, *| \subset  A_{\pi} \left( A_{\pi}|i_0, *|\right),
\end{eqnarray*}

whence

\begin{eqnarray*}
|i_0, *| \subset A_{\pi}|i_0, *| \subset  A^2_{\pi} |i_0, *|.
\end{eqnarray*}

In general,

\begin{eqnarray*}
|i_0, *| \subset A_{\pi}|i_0, *| \subset A^2_{\pi}|i_0, *| \subset \cdots \subset A^j_{\pi}|i_0, *| \subset \cdots
\end{eqnarray*}
where $j \in \N$.

Since  $|1, n|$ is a finite subset of $\N$, then there cannot be infinitely many strict inclusions in the above sequence, so at some moment two of these sets must coincide. 

Let $k$ be the positive integer defined as 

\begin{eqnarray*}
k=\min \left\{ j \in \N \, \big \vert \, A^j_{\pi}|i_0, *|=A^{j+1}_{\pi}|i_0, *| \right \}.
\end{eqnarray*}

By the second property of operator $A_{\pi}$, we have that 

\begin{eqnarray*}
A^k_{\pi}|i_0, *|=|1, n|.
\end{eqnarray*}

Note that by the fact that the length of the cycle is $n>2$ (strict inequality), we have 

\begin{eqnarray*}
|i_0, *| \neq |1, n|.
\end{eqnarray*}

This discovery leads to $k \geq 1$. Since, $|i_0, *|=|i_0, i_0+1|$ and all $i$-ths are natural numbers, we have $\mathbf{card}|i_0, *|=2$.

Therefore, 

\begin{eqnarray*}
\mathbf{card} A^j_{\pi}|i_0, *| \geq 2+j,
\end{eqnarray*}
with $0\leq j \leq k$. 
Indeed, 
this follows from the fact that we are  taking $j \leq k$: up to $k$ inclusive, the operator $A_\pi$ increases cardinality, by definition of $k$.

Hence, $k \leq n-2$.

This step completes the proof of the first part of the Lemma.

\vspace{18pt}

Before we proceed to the proof of the second part of this Lemma, let us introduce an example showing how this part of the Lemma works.

\begin{mdframed}

\begin{example}   
\label{exampleTen} 
\vspace{12pt}

We are going to use the same cycle as we had in previous example.

\begin{eqnarray*}
\pi =
\begin{bmatrix}
1 & 2 & 3 & 4 & 5 & 6 & 7 & 8\\
8 & 4 & 2 & 7 & 3 & 5 & 1 & 6 \\
\end{bmatrix}.
\end{eqnarray*}

By the definition, $i_0=4$, therefore, we can pick any $i_1 \in |1, n-1|$ with $i_1 \neq i_0$. Let $i_1=7$.

From the previous example, we have

\begin{eqnarray*}
|4,5| \xrightarrow {A_{\pi}} \underbrace{ |3,7|}_{j=1} \xrightarrow {A_{\pi}} \underbrace{|1,7|}_{j=2} \xrightarrow {A_{\pi}} \underbrace{|1,8|}_{j=3}.
\end{eqnarray*}

If $i_1=7$, we get the interval $|7, *|=|7, 7+1|=|7,8|$.
Note that

\begin{eqnarray*}
|7,8| \not \subset |3,7| \text{ \;\; for\;\;} j=1\\
|7,8| \not \subset |1, 7| \text{ \;\; for\;\;} j=2\\
|7,8| \subset |1, 8| \text{ \;\; for\;\;} j=3\\
\end{eqnarray*}

It means that  the $A_{\pi}$-image of the segment $|1,7|$ contains $|7,8|$, therefore, the $A_{\pi}$-image of some segment of consecutive integers $|1,2|,|2,3|,|3,4|, \ldots ,|6,7|$ must contain $|7,8|$, because otherwise the $A_{\pi}$-image of the entire segment $|1,7|$ could not possibly contain $|7,8|$. 

Let us find the $A_{\pi}$-images of all those segments:

\begin{eqnarray*}
|1,2| \xrightarrow {A_{\pi}} |4,8| \supset |7,8| \\
|2,3| \xrightarrow {A_{\pi}} |2,4| \not \supset |7,8| \\
|3,4| \xrightarrow {A_{\pi}} |2,7| \not \supset |7,8| \\
|4,5| \xrightarrow {A_{\pi}} |3,7| \not \supset |7,8| \\
|5,6| \xrightarrow {A_{\pi}} |3,5| \not \supset |7,8| \\
|6,7| \xrightarrow {A_{\pi}} |1,5| \not \supset |7,8| \\
\end{eqnarray*}

We can see that segment $|1,2|$ works, because the $A_{\pi}|1,2|=|4,8| \supset |7,8|$.

Therefore, we pick $i_2=1$.

Next, looking back at the $A_{\pi}$-sequence starting at $|4,5|$, we know that

\begin{eqnarray*}
|3,7| \xrightarrow {A_{\pi}} |1,7|,
\end{eqnarray*}

and hence

\begin{eqnarray*}
A_{\pi}|3,7| \supset |1,2|.
\end{eqnarray*}

Therefore, we can argue, as was shown above, that the $A_{\pi}$-image of one of the consecutive intervals $|3,4|,|4,5|, \ldots ,|6,7|$ must also contain $|1,2|$.

\begin{eqnarray*}
|3,4| \xrightarrow {A_{\pi}} |2,7| \not \supset |1,2| \\
|4,5| \xrightarrow {A_{\pi}} |3,7| \not \supset |1,2| \\
|5,6| \xrightarrow {A_{\pi}} |3,5| \not \supset |1,2| \\
|6,7| \xrightarrow {A_{\pi}} |1,5| \supset |1,2| \\
\end{eqnarray*}

Now we see that $|6,7|$ is an appropriate segment. Therefore, we pick $i_3=6$.

Finally, by the $A_{\pi}$ sequence, one can see that

\begin{eqnarray*}
|4,5| \xrightarrow {A_{\pi}} |3,7|,
\end{eqnarray*}
therefore,

\begin{eqnarray*}
A_{\pi}|4,5| \supset |6,7|.
\end{eqnarray*}

Indeed,

\begin{eqnarray*}
|4,5| \xrightarrow {A_{\pi}} |3,7| \supset |6,7|. \\
\end{eqnarray*}

Therefore, we pick $i_4=4=i_0$, and we finally obtain

$$ | i_0, * | = |4,5| \xrightarrow {A_{\pi}} |6,7| \xrightarrow {A_{\pi}} |1, 2 | \xrightarrow {A_{\pi}} 
|7, 8| = | i_1, * | . $$

We also could pick from the set $\{1,2,3,5,6,7\}$ any other integer as $i_1$.

As a second example, let us pick $i_1 = 5$. 

Therefore, the interval we are studying is

\begin{eqnarray*}
|5,*|=|5,5+1|=|5,6|.
\end{eqnarray*}

Since, $|1,8| \supset |5,6|$, we move backwards in the sequence of $A_{\pi}$-images of $|4,5|$. 

We have that 
\begin{eqnarray*}
A^2_\pi |4,5| = |1,7| \supset |5,6|, \\
A_\pi |4,5| = |3,7| \supset |5,6|.
\end{eqnarray*}

Therefore, the image of $|4,5|$ already contains $|5,6|$. Hence, $i_2=4=i_0$, and we have
$$  | i_0, * | = |4,5| \xrightarrow {A_{\pi}} |5,6| = | i_1, *| . $$

\end{example}
\end{mdframed}

Note that since $i_0 \neq i_1$ by our assumption and also $|i_1,*| \subset |1,n|$, so that $A^0_{\pi}|i_0,*|=|i_0,*| \not \supset |i_1,*|$, whereas $A^k_{\pi}|i_0,*|=|1,n| \supset |i_1,*|$, then there exists a natural number $j_1$ such that

\begin{eqnarray*}
& A^{j_1-1}_{\pi}|i_0,*| \not \supset |i_1, *| \text{\; and\:\;} \\
& A^{j_1}_{\pi}|i_0,*| \supset |i_1,*|.
\end{eqnarray*}

Since the $A_\pi$-image of the segment $A^{j_1-1}_{\pi}|i_0,*| $ (because $A_\pi \left(A^{j_1-1}_{\pi}|i_1,*|\right)=A^{j_1}_{\pi}|i_1,*|$) contains $|i_1, *|$, 
then at least one sub-segment of consecutive integers of that segment must also cover
$|i_1, *|$ under $A_\pi$. 

Therefore, we can pick an element $i_2$ such that

\begin{eqnarray*}
&  A^{j_2-1}_{\pi}|i_0,*| \not \supset |i_2, *| \text{\; and\:\;} \\
& A^{j_2}_{\pi}|i_0,*| \supset |i_2,*|
\end{eqnarray*}
for some $j_2 < j_1$.

Again, since the $j_2$-th image of $|i_0*|$ covers $|i_2,*|$, then at least one segment of consecutive integers of the image $|i_0, *|$ under $A^{j_2}_{\pi}$ covers $|i_2,*|$.

Hence, we can identify a segment $|i_3,*|$, such that

\begin{eqnarray*}
&  A^{j_3-1}_{\pi}|i_0,*| \not \supset |i_3, *| \text{\; and\:\;} \\
& A^{j_3}_{\pi}|i_0,*| \supset |i_3,*|
\end{eqnarray*}
for $j_3<j_2<j_1$.

Depending on the cycle, the process can go on.

However, by the first part of the Lemma, we have strict inclusions

\begin{eqnarray*}
A^{j+1}_{\pi}|i_0,*| \supset A^j_{\pi}|i_0,*|,
\end{eqnarray*}
for all $j=1,2, \ldots , k-1$.

Hence, at some moment, we have to choose $i_j=i_0$.

Notice that each time we pick a new $i_s$ from the set $A^{j_s+1}_{\pi}|i_0,*| \setminus A^{j_s}_{\pi}|i_0,*|$, or in other words:

\begin{eqnarray*}
& A^{j_s-1}_{\pi}|i_0,*| \not \supset |i_s, *| \text{\; and} \\
& A^{j_s}_{\pi}|i_0, *| \supset |i_s,*|,
\end{eqnarray*} 
so that we pick an element $i_s$, such that the interval $|i_s,*|$ is covered by applying  the $A_{\pi}$-operator $j_s$ times, but not covered by iteration $j_s-1$.  
Therefore, the $i_s$ are pairwise distinct. 

By our construction, the sequence of  $\displaystyle\left\{j_s\right\}^{k-1}_{1}$ is strictly decreasing.

\end{proof}

From all possible cyclic permutations of length $n, n>2$, we need to distinguish the subset of all permutations that exhibit the following property: 

There exists an element $i^* \in |1,n|$ so that both $i^*$ and $\pi(i^*)$  belong to either $|1, i_0|$ or $|i_0+1, n|$. 

The set of the permutations with such property we denote by $\mathfrak{A}$.

\begin{mdframed}

\begin{example}

\vspace{12pt}

In previous examples we had two different cycles, one of period $4$, let us call it $\pi$

\begin{eqnarray*}
\pi =
\begin{bmatrix}
1 & 2 & 3 & 4 \\
4 & 3 & 1 & 2 \\
\end{bmatrix},
\end{eqnarray*}

and a cycle of period $8$, denoted by $\xi$.

\begin{eqnarray*}
\xi =
\begin{bmatrix}
1 & 2 & 3 & 4 & 5 & 6 & 7 & 8\\
8 & 4 & 2 & 7 & 3 & 5 & 1 & 6 \\
\end{bmatrix}.
\end{eqnarray*}

Also, according to the definition, for $\pi$, we get $i_0=2$, and for $\xi$, $i_0=4$.

For simplicity, all elements from the interval $|1, i_0|$ are in blue, while elements from $|i_0+1, n|$ are in red.

Therefore,

\begin{eqnarray*}
\pi =
\begin{bmatrix}
\color{blue}1 & \color{blue}2 & \color{red}3 & \color{red}4 \\
\color{red}4 & \color{red}3 & \color{blue}1 & \color{blue}2 \\
\end{bmatrix}.
\end{eqnarray*}

Hence, there are no elements for which both an element and its image belong to either $|1, i_0|$ or $|i_0+1, n|$. It leads to the conclusion that $\pi \not \in \mathfrak{A}$.

On the other hand,

\begin{eqnarray*}
\xi =
\begin{bmatrix}
\color{blue}1 & \color{blue}2 & \color{blue}3 & \color{blue}4 & \color{red}5 & \color{red}6 & \color{red}7 & \color{red}8\\
\color{red}8 & \color{blue}4 & \color{blue}2 & \color{red}7 & \color{blue}3 & \color{red}5 & \color{blue}1 & \color{red}6 \\
\end{bmatrix}.
\end{eqnarray*}

Here we have such elements, to be exact, $i \in \{2,3,6,8\}$. Therefore, $\xi \in \mathfrak{A}$.

\end{example}
\end{mdframed}

\vspace{18pt}

Further, in addition to $i_0$, we are going to define and use some special numbers ($i_1,i_2, i_3,$ etc.) in order to prove the following Lemma.

\vspace{15pt}

\begin{center}

\begin{tikzpicture}[mydrawstyle/.style={draw=black, thick}, x=1mm, y=1mm, z=1mm]
  \draw[mydrawstyle, -> ](-5,30)--(140,30);
  \path [draw=black, fill=black] (0,30) circle (1.5) node[below=1]{$1$};
  \path [draw=black, fill=red] (70,30) circle (4pt)node[below=1]{$i_0$};
  \path [draw=black, fill=black] (60,30) circle (4pt)node[below=1]{$i_0-1$};
  \path [draw=black, fill=black] (135,30) circle (4pt)node[below=1]{$n$};
  \fill[pattern=north east lines, pattern color=blue] (0,30)--(59,30)--(59,32)--(0,32); \path [draw=black, fill=black] (35,30) circle (0) node[above=1.5]{$1 \leq i_1 \leq i_0-1$};
  
  \draw[mydrawstyle, ->](-5,5)--(140,5);
  \path [draw=black, fill=black] (0,5) circle (1.5) node[below=1]{$1$};
  \path [draw=black, fill=black] (60,5) circle (4pt)node[below=1]{$i_0-1$};
  \path [draw=black, fill=black] (70,5) circle (4pt)node[below=1]{$i_0$};
  \path [draw=black, fill=black] (80,5) circle (4pt)node[below=1]{$i_0+1$};
  \path [draw=black, fill=black] (135,5) circle (4pt)node[below=1]{$n$};
  \fill[pattern=north east lines, pattern color=blue] (1,5)--(59,5)--(59,7)--(1,7);
  \path [draw=black, fill=black] (35,5) circle (0) node[below=1.5]{$1 \leq \pi(i_1) \leq i_0$};
  \path [draw=black, fill=black] (110,5) circle (0) node[below=1.5]{$i_0+1 \leq \pi(i_0) \leq n$};
  \fill[pattern=north east lines, pattern color=red] (81,5)--(134,5)--(134,7)--(81,7);
  
  \draw[->, blue, thick] (20,29)--(35,7) node [midway, sloped, above] {$A_{\pi}$};
  \draw[->, red, thick] (70,30)--(110,7) node [midway, sloped, above] {$A_{\pi}$};

\end{tikzpicture}

\end{center}

\begin{lemma}
\label{lemma4}

 Let $\pi \in \mathfrak{A}$ be of length $n$.

\begin{enumerate}
\item There exist pairwise distinct elements $i_j \in |1, n-1|$, with 
$0 \leq j \leq r, 1 \leq r \leq n-2$, such that the operator $A_{\pi}$ acts as follows:

\begin {center}
\begin {tikzpicture}[-latex ,auto ,node distance =2 cm and 3cm ,on grid ,
thick ,
state/.style ={ circle ,top color =white , bottom color = processblue!20 ,
draw,processblue , text=black , minimum width =0.4 cm}]
\node[] (A)
{$|i_0,*|$};
\node[] (B) [below right=1.6cm and 2.2cm of A] {$|i_r,*|$};
\node[] (C) [below right=4.4cm and 1.2cm of A] {$|i_{r-1},*|$};
\node[] (D) [below left=4.4cm and 1.2cm of A] {$|i_2,*|$};
\node[] (E) [below left=1.6cm and 2.2cm of A] {$|i_1,*|$};
\path [dashed] (C) edge [bend right = 0] node[right]  {} (D);
\path (A) edge [bend left =0] node[below =0.15 cm] {} (B);
\path (B) edge [bend left =0] node[below =0.15 cm] {} (C);
\path (D) edge [bend left =0] node[below =0.15 cm] {} (E);
\path (E) edge [bend left =0] node[below =0.15 cm] {} (A);
\path (A) edge [loop above =45] node[above] {} (A);
\end{tikzpicture}
\end{center}

\item There exist elements $i_1<i_2<i_3$ of the set $|1,n|$ such that the operator $A^2_{\pi}$ acts as follows:

\begin {center}
\begin {tikzpicture}[-latex ,auto ,node distance =2 cm and 3cm ,on grid ,
thick ,
state/.style ={ circle ,top color =white , bottom color = processblue!20 ,
draw,processblue , text=black , minimum width =0.4 cm}]
\node[] (A)
{$|i_1,i_2|$};
\node[] (B) [right=4cm of A] {$|i_2,i_3|$};
\path (A) edge [bend left =35] node[below =0.15 cm] {} (B);
\path (B) edge [bend left =35] node[below =0.15 cm] {} (A);
\path (A) edge [loop left =45] node[left] {} (A);
\path (B) edge [loop right =45] node[right] {} (B);
\end{tikzpicture}
\end{center}

\end{enumerate}
\end{lemma}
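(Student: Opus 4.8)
The strategy is to get part (1) from Lemma \ref{lemma3}(2) by closing its chain of arrows into a loop, and to get the two-interval ``horseshoe'' of part (2) by exploiting the one thing the hypothesis $\pi\in\mathfrak{A}$ buys us: the action of $\pi$ across the cut at $i_0$ is \emph{not} the rigid interchange of the two halves $L=|1,i_0|$ and $R=|i_0+1,n|$. For part (1), note first that, directly from the definition of $A_\pi$, for $j\in|1,n-1|$ one has $|j,*|\to|i_0,*|$ precisely when $\pi(j)$ and $\pi(j+1)$ lie in different halves. By (\ref{i0}) we have $\pi(i_0)\in R$, and the proof of Lemma \ref{lemma3} showed $\pi(i_0+1)\in L$, so $j=i_0$ is such an index; if it were the only one, then $\pi$ would map $L$ into $R$ and $R$ into $L$, a cardinality count would force $\pi(L)=R$, and then no $i$ would have $i$ and $\pi(i)$ in the same half, contradicting $\pi\in\mathfrak{A}$. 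So there is $i_1\ne i_0$ in $|1,n-1|$ with $|i_1,*|\to|i_0,*|$. Now either $A_\pi|i_0,*|\supset|i_1,*|$ already, giving the $2$-loop $|i_0,*|\to|i_1,*|\to|i_0,*|$ ($r=1$), or we feed this $i_1$ into Lemma \ref{lemma3}(2) to get a chain $|i_0,*|\to|i_r,*|\to\cdots\to|i_1,*|$ with pairwise distinct $i_j$ and $2\le r\le k\le n-2$, and then append the arrow $|i_1,*|\to|i_0,*|$. The self-loop $|i_0,*|\to|i_0,*|$ is the inclusion (\ref{covers}). This proves part (1).

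For part (2), keep $L,R$ and the facts $\pi(i_0)\in R$, $\pi(i_0+1)\in L$: they say $A_\pi|i_0,*|$ straddles the cut, and with (\ref{covers}) and the monotonicity property of $A_\pi$ they force the iterates $A^j_\pi|i_0,*|$ to keep straddling the cut and to grow until they fill $|1,n|$. Since $\pi\in\mathfrak{A}$, fix a witness $i^*$ with $i^*$ and $\pi(i^*)$ in the same half; suppose first both are in $L$. The plan is to follow the $A_\pi$-images of the short segments sitting near $i^*$ inside $L$ and compare them with the straddling, growing images of $|i_0,*|$, and thereby extract $i_1<i_2<i_3$ in $|1,n|$ --- sitting inside one half, or straddling the cut, depending on $\pi$ --- for which
$$A^2_\pi|i_1,i_2|\supset|i_1,i_3|\qquad\hbox{and}\qquad A^2_\pi|i_2,i_3|\supset|i_1,i_3|.$$
Granting this, all four required arrows $|i_1,i_2|\to|i_2,i_3|$, $|i_2,i_3|\to|i_1,i_2|$ and the two self-loops (all under $A^2_\pi$) are immediate, since $|i_1,i_2|$ and $|i_2,i_3|$ are both contained in $|i_1,i_3|$. (The use of $A^2_\pi$ rather than $A_\pi$ is genuinely needed --- already for a cyclic permutation of length $3$ no triple works under $A_\pi$ --- and so is $\pi\in\mathfrak{A}$.) Every containment along the way is checked with the cardinality/straddling estimates in properties (1)--(3) of $A_\pi$; the translation (\ref{ftopi}) to $f$ plays no role here.

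The main obstacle is the construction of $i_1<i_2<i_3$ in part (2) together with the verification of the two displayed inclusions: one must track exactly where $A_\pi$ sends segments near the cut and near the witness $i^*$, keep the min/max bounds tight enough that two applications of $A_\pi$ wrap a pair of adjacent segments over their union, and run the parallel argument in the case where the only $\mathfrak{A}$-witnesses lie in $R$ (this really is a second argument, since the definition (\ref{i0}) of $i_0$ is not symmetric under reversing the order of $|1,n|$). Packaging both halves into one clean construction, with every cardinality inequality checked, is the heart of the proof.
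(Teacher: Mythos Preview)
Your Part (1) is correct and matches the paper's approach: both close the chain from Lemma~\ref{lemma3}(2) by producing an $i_1\ne i_0$ with $|i_1,*|\to|i_0,*|$. The paper does this by an explicit choice (in Case~1, $i_1=\max\{i\in|1,i_0|:\pi(i)\in|1,i_0|\}$; in Case~2, the mirror definition), while you argue existence abstractly via the ``if $j=i_0$ were the only crossing then $\pi(L)=R$'' contradiction. These are two phrasings of the same idea, and the self-loop at $|i_0,*|$ is handled identically via (\ref{covers}).

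Part (2), however, is not a proof but a strategy outline --- as you yourself concede in your final paragraph. You correctly reduce the problem to finding $i_1<i_2<i_3$ with $A_\pi^2|i_1,i_2|\supset|i_1,i_3|$ and $A_\pi^2|i_2,i_3|\supset|i_1,i_3|$, but you never construct these indices or verify the inclusions. Your plan to ``follow the $A_\pi$-images of the short segments sitting near $i^*$'' is too vague to lead anywhere definite. The paper's construction is specific and not obvious: assuming Case~1, it sets
\[
i_1=\max\{i\in|1,i_0|:\pi(i)\in|1,i_0|\},\qquad \pi(i_2)=\max\{\pi(i):i\in|i_1,i_0|\},\qquad i_3=i_0+1,
\]
checks $i_1<i_2<i_3$, and then --- this is the key step you are missing entirely --- proves by contradiction the existence of an auxiliary index $i_4\in|i_0+1,\pi(i_2)|$ with $\pi(i_4)\le i_1$, using the fact (Property~2 of $A_\pi$) that a cyclic permutation has no proper invariant segment. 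With $i_4$ in hand, both $|i_1,i_2|$ and $|i_2,i_3|$ are shown to $A_\pi$-cover $|i_0,i_4|$, which in turn $A_\pi$-covers $|i_1,i_3|$; this yields both required $A_\pi^2$-inclusions simultaneously. None of this machinery appears in your proposal. (Your remark that Case~2 requires a genuinely separate argument, since the definition (\ref{i0}) of $i_0$ is not symmetric under order reversal, is well taken; the paper dispatches this with ``without loss of generality'' and leaves the mirror construction to the reader.)
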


\begin{proof}

\begin{enumerate}
\item  In Part 1 of Lemma \ref{lemma3}, 
formula (\ref{covers}), 
we demonstrated that $|i_0,*| \subset A_{\pi}|i_0,*|$, or in other words, the interval $|i_0,*|$ covers itself.

In Part 2 of Lemma \ref{lemma3}, we proved that for any $i_1 \in |1, n-1|, i_0 \neq i_1$, there exists the following chain:
\begin{eqnarray}
|i_0, *| \rightarrow |i_r, *| \rightarrow |i_{r-1}, *| \rightarrow
\cdots \rightarrow|i_j, *| \rightarrow  \cdots  \rightarrow |i_1, *|,
\end{eqnarray}
with $r \leq n-2$.

We need to show that we can pick $i_1$ in such a way that also  $|i_1,*| \rightarrow |i_0,*|$.

Note that by the assumption the cyclic permutation $\pi \in \mathfrak{A}$, therefore by the definition at least one of the following sets are nonempty:

\begin{eqnarray*}
\left\{i \in |1,i_0|  \, \big \vert \, \pi(i) \in |1, i_0|\right\} \neq \emptyset \text{\; Case 1\;},\\
\left\{i \in |i_0+1,n | \, \big \vert \, \pi(i) \in |i_0+1, n|\right\} \neq \emptyset \text{\; Case 2\;}. \\
\end{eqnarray*}

Let us proceed by cases.

\begin{itemize}
\item \textbf{Case 1}. 

Assume that the set 
$\left\{i \in |1,i_0| \, \big \vert \, \pi(i) \in |1, i_0|\right\}$ is not empty. Then we define $i_1$ as follows:

\begin{eqnarray*}
i_1=\max \left\{ i \in |1, i_0| \, \big\vert \, \pi(i) \in |1, i_0| \right\}.
\end{eqnarray*}

Note that the element $i_0$ divides the cyclic permutation into two intervals: $|1, i_0|$ and $|i_0+1, n|$. In this case both $i_1, \pi(i_1) \in |1, i_0|$, while $\pi(i_0) \in |i_0+1, n|$. 

Therefore, 
\begin{eqnarray*}
i_1 < i_0 \text{\; and\;} \pi(i_1) \leq i_0.
\end{eqnarray*}

Keeping in mind that $i \in \N$, we have:

\begin{eqnarray*}
\pi(i_1) < i_0 < i_0+1.
\end{eqnarray*}

Also, by definition of $i_1$, we know that the image of $i_1+1$ no longer belongs to $|1, i_0|$ or, in mathematical notation, $\pi(i_1+1) \in |i_0+1, n|$; therefore,

\begin{eqnarray*}
i_0+1 \leq \pi(i_1+1) \leq n.
\end{eqnarray*}

Having combined all the inequalities, one gets:

\begin{eqnarray*}
&\pi(i_1) < i_0 < i_0+1 \leq \pi(i_1+1), \\
\end{eqnarray*}

Therefore, it leads to the conclusion that the image of $|i_1,*|=|i_1, i_1+1|$ covers $|i_0, *|=|i_0, i_0+1|$.

This completes the proof of Case 1.

\item \textbf{Case 2}.

Assume that
$\left\{i \in |i_0+1,n| \big \vert \pi(i) |i_0+1, n|\right\} \neq \emptyset$. Here we define $i_1$ in a different way:

\begin{eqnarray*}
i_1=\min \left\{ i \in |i_0+1, n| \big \vert \pi(i) \in |i_0+1, n| \right\}-1.
\end{eqnarray*}

Therefore, the image of $i_1$ belongs to $|1, i_0|$, while the image of $|i_1+1|$ is in $|i_0+1, n|$.

Note that $\pi(i_1) \leq i_0$ and $\pi(i_1+1) \geq i_0+1$.

We thus obtain the following inequalities:

\begin{eqnarray*}
\pi(i_1) \leq i_0 < i_0+1 \leq \pi(i_1+1).
\end{eqnarray*}

Hence, the image of  $|i_1,*|$ covers $|i_0,*|$.

\end{itemize}

In addition to the chain 
$|i_0, *| \rightarrow |i_r, *| \rightarrow |i_{r-1}, *| \rightarrow
\cdots \rightarrow|i_j, *| 
\rightarrow \cdots \rightarrow |i_1, *|,$ 
we are guaranteed to have by Lemma \ref{lemma3}, we have showed that $|i_1, *| \rightarrow |i_0, *|$ for our chosen value $i_1$. This completes the proof of the Part 1 of the Lemma \ref{lemma3}.

Note that by the way we defined $i_0$ and as we demonstrated, $| i_0,*|$ covers itself. 

This completes the proof of the first statement of Lemma \ref{lemma4}.

\item 

In order to prove the second Part of this Lemma, we are going to follow the steps below:

\begin{center}
\begin{tikzpicture}[node distance=2cm]
\tikzstyle{startstop} = [rectangle, minimum width=3cm, minimum height=1cm,text centered, draw=black, fill=white!30]
\node (A) [startstop] {Show that $i_1 < i_2 < i_3$}; 
\node (B) [startstop, below of=A] {Proof the existence of $i_4$};
\node (C) [startstop, below of=B] {Complete the proof of Lemma using $i_4$};

\tikzstyle{arrow} = [very thick,->,>=stealth]
\draw [arrow] (A) -- (B);
\draw [arrow] (B) -- (C);

\end{tikzpicture}
\end{center}

\textbf{\color {black} Step 1}.

Without loss of generality  we assume that

\begin{eqnarray*}
\left\{i \in |1,i_0| \, \big \vert \, \pi(i) \in |1, i_0|\right\} \neq \emptyset.
\end{eqnarray*}

Similarly to the previous part we define $i_1$ as

\begin{eqnarray*}
i_1=\max \left\{ i \in |1, i_0| \, \big \vert \, \pi(i) \in |1, i_0| \right\}.
\end{eqnarray*}

As the next step, we define $i_2$ so that $\pi(i_2)$ is the maximum of  the $\pi(i)$ for $i\in |i_1, i_0|${\color{blue},} 
that is

\begin{eqnarray*}
\pi(i_2)=\max \left\{\pi(i)\big \vert i \in |i_1, i_0| \right \}.
\end{eqnarray*}

Finally, let us set 

\begin{eqnarray*}
i_3=i_0+1.
\end{eqnarray*}

Consider the interval $|i_1,i_0|$ and the element $i_2$ in it.

Note that $i_1<i_0$, and the image of $i_1$ by its definition stays in $|1, i_0|$, while the images of the elements 
after $i_1$, including the image of $i_0$, belong to $|i_0+1, n|$. Therefore, we can conclude that

\begin{eqnarray*}
i_1 <i_2 \leq i_0.
\end{eqnarray*}

Also, we clearly have $i_0<i_0+1$. 
Therefore,

\begin{eqnarray*}
&i_1<i_2 \leq i_0 <i_0+1=i_3, \\
\end{eqnarray*}
that is

\begin{eqnarray*}
i_1 < i_2 <i_3.
\end{eqnarray*}

This completes the first step.

\vspace{15pt}

\textbf{\color {black} Step 2}.

Let us call for a moment

$$ J = | i_0 + 1, \pi(i_2)| \qquad \hbox{ and } \qquad
    K = | i_1+1, \pi(i_2) | , $$
so that $ J \subset K$. Let us also label the difference set:
$L = K \setminus J = |i_1 + 1, i_0|$. 

Claim: there exists an element $i_4 \in J$ such that $\pi(i_4) \leq i_1$. 
We will prove this by contradiction. To this end, let us first of all check a couple of facts:
\begin{itemize}

\item $A_\pi L \subset J$. Indeed, if $i \in L$ then $\pi(i) \leq \pi(i_2)$, by definition of
$i_2$. On the other hand, if $i \in L$ then, by definition of $i_1$, $\pi(i)$ no longer lies
in $|1, i_0|$, whence $\pi(i)>i_0$, and therefore $\pi(i) \ge i_0+1$. This shows that
$\pi(i) \in J$ for all such $i$. 

\begin{center}

\begin{tikzpicture}[mydrawstyle/.style={draw=black, thick}, x=1mm, y=1mm, z=1mm]


  \draw[mydrawstyle, -> ](0,30)--(140,30);
  \path [draw=black, fill=black] (10,30) circle (3pt) node[below=1]{$1$};
  \path [draw=black, fill=red] (70,30) circle (3pt)node[below=1]{$i_0$};
  \path [draw=black, fill=black] (80,30) circle (3pt)node[below=1]{$i_0+1$};
  \path [draw=black, fill=black] (120,30) circle (3pt)node[below=1]{$\pi(i_2)$};
  \path [draw=black, fill=black] (40,30) circle (3pt)node[below=1]{$i_1$};
  \path [draw=black, fill=blue] (60,30) circle (3pt)node[below=1]{$i_1+1$};
  \path [draw=black, fill=black] (135,30) circle (3pt)node[below=1]{$n$};
  \fill[fill=blue, opacity=0.5] (61,29)--(69,29)--(69,31)--(61,31); 
  
  
  \draw[mydrawstyle, -> ](0,5)--(140,5) ;
  \path [draw=black, fill=black] (10,5) circle (3pt) node[below=1]{$1$};
  \path [draw=black, fill=black] (70,5) circle (3pt)node[below=1]{$i_0$};
  \path [draw=black, fill=black] (80,5) circle (3pt)node[below=1]{$i_0+1$};
  \path [draw=black, fill=black] (120,5) circle (3pt)node[below=1]{$\pi(i_1+1)$};
  \path [draw=black, fill=black] (40,5) circle (3pt)node[below=1]{$i_1$};
  \path [draw=black, fill=black] (60,5) circle (3pt)node[below=1]{$i_2$};
  \path [draw=black, fill=black] (135,5) circle (3pt)node[below=1]{$n$};
  \fill[pattern=north east lines, pattern color=blue] (80,5)--(119,5)--(119,7)--(80,7); 
  \fill[pattern=north west lines, pattern color=red] (80,5)--(134,5)--(134,8)--(80,8);

  
  \draw[->, blue, thick] (60,30)--(100,7) ;
  \draw[->, red, thick] (70,30)--(110,9) ;

\end{tikzpicture}
\end{center}

\item $\pi(i) < i $ for every $i \in J$. This follows from the definition of $i_0$, plus
the fact that we cannot have $\pi(i) = i$ for any $i$ in a cyclic permutation of order $>1$.
\end{itemize}

\smallskip

Let us now proceed with the proof by contradiction. Assume there is no such $i_4$.
Then we would have $\pi(i) > i_1$ for every $i \in J$. 
We claim that this would then imply that 
$$ A_\pi K \subset K ,  \eqno{(*)} $$
against the fact that $A_\pi$ has no non-trivial invariant subsets of $|1, n|$,
since $\pi$ is a cyclic permutation  
(this is Property 2 of the operator $A_\pi$). 

Let us prove that our contradiction assumption indeed implies (*). 

Since we have $K = J \cup L$, and, 
by the first bullet point above,  $A_\pi L \subset J \subset K$,
then we only need to prove that
$A_\pi J \subset K$.

First, 
if $i \in J$ then $\pi(i)<i$, as pointed out earlier. Therefore,
$\pi(i) < i \leq \pi(i_2)$ (the second inequality follows from the assumption that $i$ lies in $J$). 

Next, if $i \in J$ then, by our contradiction assumption, we have $\pi(i) > i_1$, whence
$\pi(i) \geq i_1 + 1$. 

Both facts together show that indeed $A_\pi J \subset K$. 

Thus, our contradiction assumption indeed implies (*), which is impossible.
Therefore, the existence of such $i_4$ has been established.

The positions of the elements on  the  number line can be expressed by the  inequalities below:

\begin{eqnarray*}
\pi(i_4) \leq i_1 < i_1+1 \leq i_0 <i_0+1 \leq \pi(i_0) < \pi(i_2).
\end{eqnarray*}

\begin{center}

\begin{tikzpicture}[mydrawstyle/.style={draw=black, thick}, x=1mm, y=1mm, z=1mm]
  \draw[mydrawstyle, -> ](-5,30)--(140,30);
  \path [draw=black, fill=black] (0,30) circle (3pt) node[below=1]{$1$};
  \path [draw=black, fill=red] (70,30) circle (3pt)node[below=1]{$i_0$};
  \path [draw=black, fill=black] (80,30) circle (3pt)node[below=1]{$i_0+1$};
  \path [draw=black, fill=black] (100,30) circle (3pt)node[below=1]{$i_4$};
  \path [draw=black, fill=black] (120,30) circle (3pt)node[below=1]{$\pi(i_2)$};
  \path [draw=black, fill=black] (40,30) circle (3pt)node[below=1]{$i_1$};
  \path [draw=black, fill=black] (50,30) circle (3pt)node[below=1]{$i_1+1$};
  \path [draw=black, fill=black] (60,30) circle (3pt)node[below=1]{$i_2$};
  \path [draw=black, fill=black] (135,30) circle (3pt)node[below=1]{$n$};
  \fill[fill=red, opacity=0.5] (51,29)--(69,29)--(69,31)--(51,31); 
  
  
  \draw[mydrawstyle, ->](-5,5)--(140,5);
  \path [draw=black, fill=black] (0,5) circle (3pt) node[below=1]{$1$};
  \path [draw=black, fill=black] (25,5) circle (3pt)node[below=1]{$\pi(i_4)$};
  \path [draw=black, fill=black] (40,5) circle (3pt)node[below=1]{$i_1$};
  \path [draw=black, fill=black] (80,5) circle (3pt)node[below=1]{$i_0+1$};
  \path [draw=black, fill=black] (100,5) circle (3pt)node[below=1]{$\pi(i_0)$};
  \path [draw=black, fill=black] (100,5) circle (0pt)node[above=1]{$ A_\pi|i_1+1, i_0|$};
  \path [draw=black, fill=black] (120,5) circle (3pt)node[below=1]{$\pi(i_2)$};
  \path [draw=black, fill=black] (135,5) circle (3pt)node[below=1]{$n$};
  \fill[pattern=north east lines, pattern color=blue] (1,5)--(39,5)--(39,7)--(1,7);
  \fill[fill=red, opacity=0.5] (81,4)--(119,4)--(119,6)--(81,6);

\end{tikzpicture}

\end{center}

This completes the second step.
\vspace{15pt}

\textbf{\color {black} Step 3}.

Consider the interval $|i_1, i_2|$. As the first step, we need to see the image of this interval under the $A_{\pi}$-operator. 
Since the image of $i_1$ stays in $|1, i_0|$, then it leads to $\pi(i_1) \leq i_0$. On the other hand, $i_4 \in |i_0+1, \pi(i_2)|$, therefore, $i_4 \leq \pi(i_2)$ and $i_0 < i_4$. All together, it becomes:

\begin{eqnarray*}
\pi(i_1) \leq i_0 < i_4 \leq \pi(i_2).
\end{eqnarray*}

Therefore, $|i_1, i_2|$ covers $|i_0, i_4|$.

Now we need to find the image of $|i_0, i_4|$ under the $A_{\pi}$-operator.

By the definitions of the points, the image of $i_0$ belongs to $|i_0+1, n|$ and the point $i_4$ is mapped into $|1, i_1|$. Hence,

\begin{eqnarray*}
\pi(i_4) \leq i_1 < i_2 < i_3 \leq \pi(i_0).
\end{eqnarray*}

Thus, $|i_0, i_4| \rightarrow |i_1, i_3|$ or $|i_1, i_2| \xrightarrow {A^2_{\pi}} |i_1, i_3|$.

\begin{center}

\begin{tikzpicture}[mydrawstyle/.style={draw=black, thick}, x=1mm, y=1mm, z=1mm]


  \draw[mydrawstyle, -> ](0,30)--(140,30) node at (-6,30)[left]{$A^0_{\pi}|i_1, i_2|$};;
  \path [draw=black, fill=black] (10,30) circle (3pt) node[below=1]{$1$};
  \path [draw=black, fill=black] (70,30) circle (3pt)node[below=1]{$i_0$};
  \path [draw=black, fill=black] (80,30) circle (3pt)node[below=1]{$i_3$};
  \path [draw=black, fill=black] (100,30) circle (3pt)node[below=1]{$i_4$};
  \path [draw=black, fill=black] (120,30) circle (3pt)node[below=1]{$\pi(i_2)$};
  \path [draw=black, fill=red] (40,30) circle (3pt)node[below=1]{$i_1$};
  \path [draw=black, fill=orange] (60,30) circle (3pt)node[below=1]{$i_2$};
  \path [draw=black, fill=black] (135,30) circle (3pt)node[below=1]{$n$};
  \fill[fill=blue, opacity=0.5] (41,29)--(59,29)--(59,31)--(41,31); 
  
  
  \draw[mydrawstyle, -> ](0,5)--(140,5) node at (-6,5)[left]{$A^1_{\pi}|i_1, i_2|$};;
  \path [draw=black, fill=black] (10,5) circle (3pt) node[below=1]{$1$};
  \path [draw=black, fill=red] (70,5) circle (3pt)node[below=1]{$i_0$};
  \path [draw=black, fill=black] (80,5) circle (3pt)node[below=1]{$i_3$};
  \path [draw=black, fill=black] (100,5) circle (3pt)node[below=1]{$i_4$};
  \path [draw=black, fill=orange] (120,5) circle (3pt)node[below=1]{$\pi(i_2)$};
  \path [draw=black, fill=white] (40,5) circle (3pt)node[below=1]{$i_1$};
  \path [draw=black, fill=black] (60,5) circle (3pt)node[below=1]{$i_2$};
  \path [draw=black, fill=black] (135,5) circle (3pt)node[below=1]{$n$};
  \fill[fill=blue, opacity=0.5] (71,4)--(119,4)--(119,6)--(71,6); 
  \fill[pattern=north west lines, pattern color=red] (11,5)--(39,5)--(39,7)--(11,7);
  \fill[pattern=north west lines, pattern color=red] (41,5)--(69,5)--(69,7)--(41,7);
  
  
  \draw[mydrawstyle, -> ](0,-20)--(140,-20) node at (-6,-20)[left]{$A^2_{\pi}|i_1, i_2|$};;
  \path [draw=black, fill=black] (10,-20) circle (3pt) node[below=1]{$1$};
  \path [draw=black, fill=black] (70,-20) circle (3pt)node[below=1]{$i_0$};
  \path [draw=black, fill=black] (80,-20) circle (3pt)node[below=1]{$i_3$};
  \path [draw=black, fill=black] (100,-20) circle (3pt)node[below=1]{$i_4$};
  \path [draw=black, fill=black] (120,-20) circle (3pt)node[below=1]{$\pi(i_2)$};
  \path [draw=black, fill=black] (40,-20) circle (3pt)node[below=1]{$i_1$};
  \path [draw=black, fill=black] (60,-20) circle (3pt)node[below=1]{$i_2$};
  \path [draw=black, fill=black] (135,-20) circle (3pt)node[below=1]{$n$};
  \fill[fill=blue, opacity=0.5] (41,-21)--(79,-21)--(79,-19)--(41,-19); 
  \fill[pattern=north west lines, pattern color=red] (81,-20)--(134,-20)--(134,-18)--(81,-18);
  \fill[pattern=north west lines, pattern color=black] (11,-20)--(39,-20)--(39,-18)--(11,-18);

  
  \draw[->, red, thick] (40,29)--(30,7) node [midway, sloped, above] {$A_{\pi}$};
  \draw[->, orange, thick] (60,30)--(118,7) node [midway, sloped, above] {$A_{\pi}$};
  \draw[->, red, thick] (70,5)--(110,-18) node [midway, sloped, above] {$A_{\pi}$};
  \draw[->, black, thick] (100,5)--(20,-18) node [midway, sloped, above] {$A_{\pi}$};

\end{tikzpicture}
\end{center}

Now we need to consider the image of $|i_2, i_3|$. Since $i_1 < i_2$ and $i_1$ was the maximum element whose image stays in $|1, i_0|$, the image of $i_2$ belongs to $|i_0+1, n|$.

Also, $i_0<i_3=i_0+1$ and $i_0$ is the maximum element with $\pi(i) > i$. Therefore, $\pi(i_3) \in |1, i_0|$.

The outcome of the present analysis is:

\begin{eqnarray*}
\pi(i_3) \leq i_0 <i_2 <i_3 \leq i_4 \leq \pi(i_2).
\end{eqnarray*}

Therefore, $|i_2, i_3| \rightarrow |i_0, i_4|$. In a previous step, we demonstrated that $|i_0, i_4| \rightarrow |i_1, i_3|$.

Consequently, $|i_2, i_3| \xrightarrow {A^2_{\pi}} |i_1, i_3|$.

\begin{center}

\begin{tikzpicture}[mydrawstyle/.style={draw=black, thick}, x=1mm, y=1mm, z=1mm]


  \draw[mydrawstyle, -> ](0,30)--(140,30) node at (-6,30)[left]{$A^0_{\pi}|i_2, i_3|$};;
  \path [draw=black, fill=black] (10,30) circle (3pt) node[below=1]{$1$};
  \path [draw=black, fill=black] (70,30) circle (3pt)node[below=1]{$i_0$};
  \path [draw=black, fill=red] (80,30) circle (3pt)node[below=1]{$i_3$};
  \path [draw=black, fill=black] (100,30) circle (3pt)node[below=1]{$i_4$};
  \path [draw=black, fill=black] (120,30) circle (3pt)node[below=1]{$\pi(i_2)$};
  \path [draw=black, fill=black] (40,30) circle (3pt)node[below=1]{$i_1$};
  \path [draw=black, fill=orange] (60,30) circle (3pt)node[below=1]{$i_2$};
  \path [draw=black, fill=black] (135,30) circle (3pt)node[below=1]{$n$};
  \fill[fill=blue, opacity=0.5] (61,29)--(79,29)--(79,31)--(61,31); 
  
  
  \draw[mydrawstyle, -> ](0,5)--(140,5) node at (-6,5)[left]{$A^1_{\pi}|i_2, i_3|$};;
  \path [draw=black, fill=black] (10,5) circle (3pt) node[below=1]{$1$};
  \path [draw=black, fill=black] (70,5) circle (3pt)node[below=1]{$i_0$};
  \path [draw=black, fill=black] (80,5) circle (3pt)node[below=1]{$i_3$};
  \path [draw=black, fill=red] (100,5) circle (3pt)node[below=1]{$i_4$};
  \path [draw=black, fill=orange] (120,5) circle (3pt)node[below=1]{$\pi(i_2)$};
  \path [draw=black, fill=black] (40,5) circle (3pt)node[below=1]{$i_1$};
  \path [draw=black, fill=black] (60,5) circle (3pt)node[below=1]{$i_2$};
  \path [draw=black, fill=black] (135,5) circle (3pt)node[below=1]{$n$};
  \fill[fill=blue, opacity=0.5] (71,4)--(119,4)--(119,6)--(71,6); 
  \fill[pattern=north west lines, pattern color=red] (11,5)--(69,5)--(69,7)--(11,7);
  
  
  \draw[mydrawstyle, -> ](0,-20)--(140,-20) node at (-6,-20)[left]{$A^2_{\pi}|i_2, i_3|$};;
  \path [draw=black, fill=black] (10,-20) circle (3pt) node[below=1]{$1$};
  \path [draw=black, fill=red] (70,-20) circle (3pt)node[below=1]{$i_0$};
  \path [draw=black, fill=black] (80,-20) circle (3pt)node[below=1]{$i_3$};
  \path [draw=black, fill=black] (100,-20) circle (3pt)node[below=1]{$i_4$};
  \path [draw=black, fill=black] (120,-20) circle (3pt)node[below=1]{$\pi(i_2)$};
  \path [draw=black, fill=black] (40,-20) circle (3pt)node[below=1]{$i_1$};
  \path [draw=black, fill=black] (60,-20) circle (3pt)node[below=1]{$i_2$};
  \path [draw=black, fill=black] (135,-20) circle (3pt)node[below=1]{$n$};
  \fill[fill=blue, opacity=0.5] (41,-21)--(79,-21)--(79,-19)--(41,-19); 
  \fill[pattern=north west lines, pattern color=black] (81,-20)--(134,-20)--(134,-18)--(81,-18);
  \fill[pattern=north west lines, pattern color=red] (11,-20)--(39,-20)--(39,-18)--(11,-18);

  
  \draw[->, red, thick] (80,29)--(30,7) node [midway, sloped, above] {$A_{\pi}$};
  \draw[->, orange, thick] (60,30)--(118,7) node [midway, sloped, above] {$A_{\pi}$};
  \draw[->, black, thick] (70,5)--(110,-18) node [midway, sloped, above] {$A_{\pi}$};
  \draw[->, red, thick] (100,5)--(20,-18) node [midway, sloped, above] {$A_{\pi}$};

\end{tikzpicture}
\end{center}

To summarize, we have proved that
$$ |i_1, i_2| \xrightarrow {A^2_{\pi}} |i_1, i_3|   \qquad \hbox{ and } \qquad  
        |i_2, i_3| \xrightarrow {A^2_{\pi}} |i_1, i_3| . $$
 Together with the fact that $i_1 < i_2  < i_3$, this concludes the proof
  of the second part of Lemma \ref{lemma4}.

\end{enumerate}
\end{proof}

\smallskip

An example of application of Lemma \ref{lemma4} is provided below.

\begin{mdframed}
\begin{example}

\vspace{12pt}
 Consider the cyclic permutation $\pi$ of period $8$ (similar to the previous examples).

\begin{eqnarray*}
\pi =
\begin{bmatrix}
1 & 2 & 3 & 4 & 5 & 6 & 7 & 8\\
8 & 4 & 2 & 7 & 3 & 5 & 1 & 6 \\
\end{bmatrix}.
\end{eqnarray*}

We already showed that for $i_1=7$ the following sequence is obtained: $i_2=1, i_3=6, i_4=i_0=4$.

Also, we demonstrated that the sequence of $i$ for $i_1=5$ 
is $i_1=5, i_2=i_0=4$, then we get the following result (depicted below).

\begin {center}
\begin {tikzpicture}[-latex ,auto ,node distance =1.5 cm and 2cm ,on grid ,
thick ,
state/.style ={ circle ,top color =white , bottom color = processblue!20 ,
draw,processblue , text=black , minimum width =0.4 cm}]
\node[] (A)
{$|4,5|$};
\node[] (B) [below right=2cm and 2cm of A] {$|6,7|$};
\node[] (C) [below =4cm of A] {$|1,2|$};
\node[] (D) [below left=2cm and 2cm of A] {$|7,8|$};
\path (A) edge [bend left =0] node[below =0.15 cm] {} (B);
\path (B) edge [bend left =0] node[below =0.15 cm] {} (C);
\path (C) edge [bend left =0] node[below =0.15 cm] {} (D);
\path (D) edge [bend left =0] node[below =0.15 cm] {} (A);
\path (A) edge [loop above =45] node[above] {} (A);
\end{tikzpicture}
\qquad
\begin {tikzpicture}[-latex ,auto ,node distance =1.5 cm and 2cm ,on grid ,
thick ,
state/.style ={ circle ,top color =white , bottom color = processblue!20 ,
draw,processblue , text=black , minimum width =0.4 cm}]
\node[] (A)
{$|4,5|$};
\node[] (C) [right =4cm of A] {$|5,6|$};
\path (A) edge [bend left =30] node[below =0.15 cm] {} (C);
\path (C) edge [bend left =30] node[below =0.15 cm] {} (A);
\path (A) edge [loop above =45] node[above] {} (A);
\end{tikzpicture}
\end{center}

Among other possible choices we could pick $i_1=2$;
then we need to consider the interval $|i_1, *|=|2,3|$.

Arguing as in Example \ref{exampleTen},
we can create a path from $|i_0, *|=|4,5|$ to $|i_1, *|=|2,3|$. 
Namely, $A_\pi |4,5| = |3, 7| \supset |2,3|$, and we notice that we can pick $i_2 = 3$ 
and $i_3 = 4 = i_0$, to get
$$ | i_0, * | = |4,5| \xrightarrow {A_{\pi}} |3,4| \xrightarrow {A_{\pi}} |2,3 |  = | i_1, * | . $$
However, $|2,3|$ does not cover $|4,5|$: 

\begin{center}
\begin {tikzpicture}[-latex ,auto ,node distance =1.5 cm and 2cm ,on grid ,
thick ,
state/.style ={ circle ,top color =white , bottom color = processblue!20 ,
draw,processblue , text=black , minimum width =0.4 cm}]
\node[] (A)
{$|4,5|$};
\node[] (B) [below right=2cm and 2cm of A] {$|3,4|$};
\node[] (C) [below left=2cm and 2cm of A] {$|2,3|$};
\path (A) edge [bend left =0] node[below =0.15 cm] {} (B);
\path (B) edge [bend left =0] node[below =0.15 cm] {} (C);
\path (C) edge [bend left =0] node[sloped, below =-0.3 cm] {\color{red}\textbf{X}} (A);
\path (A) edge [loop above =45] node[above] {} (A);
\end{tikzpicture}
\end{center}

Therefore, we can conclude that picking $i_1=2$ was a bad choice, 
since in this case we cannot close the loop from $i_1$ back to $i_0$  

Lemma \ref{lemma4}, on the other hand, guarantees that we can always make a right choice of $i_1$, 
which would allow us to ``close the loop.'' 

Let us show how we can do this. 

We consider the same permutation again (note that $i_0$ is highlighted). Recall that in the previous example we showed that $\pi \in \mathfrak{A}$.

\begin{eqnarray*}
\pi =
\begin{bmatrix}
1 & 2 &  \color{red}3 & \cellcolor[gray]{.8} 4 & 5 & \color{blue}6 & 7 & \color{blue} 8\\
 8 & 4 &  \color{red} 2 & \cellcolor[gray]{.8} 7 & 3 & \color{blue} 5 & 1 & \color{blue} 6 \\
\end{bmatrix}.
\end{eqnarray*}

In the proof of Lemma \ref{lemma4} we considered two cases.

\begin{eqnarray*}
\left\{i \in |1,i_0|  \, \big \vert \, \pi(i) \in |1, i_0|\right\} \neq \emptyset \text{\; {\color{red}Case 1}\;},\\
\left\{i \in |i_0+1,n | \, \big \vert \, \pi(i) \in |i_0+1, n|\right\} \neq \emptyset \text{\; {\color{blue}Case 2}\;}. \\
\end{eqnarray*}

Luckily, $\pi$ satisfies both cases.

Following the choice prescribed under Case 1, we must pick 
\begin{eqnarray*}
i_1=\max \left\{ i \in |1, i_0| \, \big\vert \, \pi(i) \in |1, i_0| \right\} = 3.
\end{eqnarray*}

Then $A_\pi |3,4| = |2,7| \supset |4,5|$, so indeed the loop is closed. 
Working ``backwards'' for the images of $|4,5|$, as we did in Example \ref{exampleTen}:

\begin{eqnarray*}
|4,5| \xrightarrow {A_{\pi}} \underbrace{ |3,7|}_{j=1} \xrightarrow {A_{\pi}} \underbrace{|1,7|}_{j=2} \xrightarrow {A_{\pi}} \underbrace{|1,8|}_{j=3}.
\end{eqnarray*}
we see that actually the first step works: $|4,5|  \xrightarrow {A_{\pi}} |3,7| \supset |3,4|$ 
(this is of course our luck, it doesn't have to be just a two-loop), 
so we get: 

\begin {center}
\qquad
\begin {tikzpicture}[-latex ,auto ,node distance =1.5 cm and 2cm ,on grid ,
thick ,
state/.style ={ circle ,top color =white , bottom color = processblue!20 ,
draw,processblue , text=black , minimum width =0.4 cm}]
\node[] (A)
{$|4,5|$};
\node[] (C) [right =4cm of A] {$|3,4|$};
\path (A) edge [bend left =30] node[below =0.15 cm] {} (C);
\path (C) edge [bend left =30] node[below =0.15 cm] {} (A);
\path (A) edge [loop above =45] node[above] {} (A);
\end{tikzpicture}
\end{center}

Under Case 2, we should pick
\begin{eqnarray*}
i_1=\min \left\{ i \in |i_0+1, n| \big \vert \pi(i) \in |i_0+1, n| \right\}-1 = 5.
\end{eqnarray*}

Then, again, the theory works, and the loop is closed, since
$A_\pi |5,6| = |3,5| \supset |4,5|$. 
This, also, happens to be our second choice of $i_1$ in 

Example \ref{exampleTen}. 

Now, working again backwards from the $A_\pi$-images of $|4,5|$, we see we are again lucky
and the first iteration already works 
so we indeed have
\begin {center}
\qquad
\begin {tikzpicture}[-latex ,auto ,node distance =1.5 cm and 2cm ,on grid ,
thick ,
state/.style ={ circle ,top color =white , bottom color = processblue!20 ,
draw,processblue , text=black , minimum width =0.4 cm}]
\node[] (A)
{$|4,5|$};
\node[] (C) [right =4cm of A] {$|5,6|$};
\path (A) edge [bend left =30] node[below =0.15 cm] {} (C);
\path (C) edge [bend left =30] node[below =0.15 cm] {} (A);
\path (A) edge [loop above =45] node[above] {} (A);
\end{tikzpicture}
\end{center}

As we demonstrated,  
besides the choice(s) based on Lemma \ref{lemma4}, there might be other $i_1$'s that will work (as long as a permutation belongs to $\mathfrak{A}$). However, the permutation below is an example that demonstrates the existence of permutations with unique $i_1$, 
which is highlighted below. 

\begin{eqnarray*}
\pi =
\begin{bmatrix}
& {\cellcolor[gray]{.8}1} & 2 & 3 & 4 & 5 & 6 & 7 & 8 & 9\\
& {\cellcolor[gray]{.8}4} & 6 & 9 & 8 & 7 & 5 & 3 & 2 & 1 \\
\end{bmatrix}.
\end{eqnarray*}

\end{example}
\end{mdframed}

\bigskip

\begin{lemma}
\label{sublemma_6}
Every cyclic permutation of odd period $n, n \in \N${\color{blue},} 
belongs to $\mathfrak{A}$.
\end{lemma}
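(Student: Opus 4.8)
The plan is to prove the contrapositive: if a cyclic permutation $\pi$ of length $n>2$ does \emph{not} belong to $\mathfrak{A}$, then $n$ is even. Consequently every cyclic permutation of odd length must lie in $\mathfrak{A}$. First I would fix notation: write $L=|1,i_0|$ and $R=|i_0+1,n|$, where $i_0$ is defined by (\ref{i0}). Recall from the discussion preceding Lemma \ref{lemma3} that for $n>2$ one has $i_0\in|1,n-1|$, so both $L$ and $R$ are nonempty, and together they partition $|1,n|$.

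Next I would unwind the hypothesis $\pi\notin\mathfrak{A}$. By the definition of $\mathfrak{A}$, this says exactly that there is no index $i^\ast$ for which $i^\ast$ and $\pi(i^\ast)$ lie in the same block; equivalently, for every $i\in|1,n|$ the numbers $i$ and $\pi(i)$ lie in \emph{different} blocks. This really is a clean dichotomy, because a cyclic permutation of length $n>1$ has no fixed point, so $i\neq\pi(i)$ always, and hence exactly one of $i$, $\pi(i)$ lies in $L$. Therefore $\pi(L)\subseteq R$ and $\pi(R)\subseteq L$: the permutation swaps the two blocks.

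The conclusion is then a one-line counting argument. Since $\pi$ is injective, $\pi(L)\subseteq R$ forces $|L|\le|R|$ and $\pi(R)\subseteq L$ forces $|R|\le|L|$, so $|L|=|R|$, i.e. $i_0=n-i_0$, whence $n=2i_0$ is even. Taking the contrapositive: any cyclic permutation of odd length $n$ (necessarily $n\ge 3$, so that $i_0$ and $\mathfrak{A}$ are defined) belongs to $\mathfrak{A}$, which is precisely the statement of Lemma \ref{sublemma_6}.

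I do not expect a genuine obstacle here — the whole proof is a parity count. The only two points that warrant a moment's care, and both were already handled earlier in the paper, are: (i) the translation of ``$\pi\notin\mathfrak{A}$'' into the block-swapping picture $\pi(L)\subseteq R$, $\pi(R)\subseteq L$, which relies on the absence of fixed points in a nontrivial cyclic permutation; and (ii) the fact that $i_0$ lies strictly between $1$ and $n$, so that neither $L$ nor $R$ is empty and the cardinality comparison is meaningful.
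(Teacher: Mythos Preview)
Your proof is correct and is essentially the same argument as the paper's: the paper also observes that if $\pi\notin\mathfrak{A}$ then $\pi$ must swap the two blocks $|1,i_0|$ and $|i_0+1,n|$, and then uses injectivity to force $i_0=n-i_0$, i.e.\ $n=2i_0$ even. The only cosmetic difference is that the paper phrases it as a proof by contradiction rather than as a contrapositive.
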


\begin{proof}

Proceed by contradiction. We assume that there exists a cyclic permutation of odd period $n$ greater than $1$ that is not in $\mathfrak{A}$:

\begin{eqnarray*}
\begin{bmatrix}
1 & 2 &  \ldots  & i_0 & \ldots & n \\
a_1 & a_2 & \ldots & a_i & \ldots & a_n  \\
\end{bmatrix}.
\end{eqnarray*}

Since $i_0$ cannot be the last element of the cycle, we get two subintervals:

\begin{eqnarray*}
& |1, i_0|,\; \textbf{card}|1, i_0|=i_0; \\
& |i_0+1, n|,\; \textbf{card}|i_0+1, n|=n-i_0. \\
\end{eqnarray*}
Since the permutation is not in $\mathfrak A$,
then the 
images of all the elements from $|i_0+1, n|$ are in $|1, i_0|$, 
and vice-versa. Moreover,  
since any permutation
$\pi$ is a one-to-one map of $|1, n|$ onto itself,  then 
these two sets must have the same cardinality, that is, 

\begin{eqnarray*}
& \mathbf{card}\left(|i_0+1, n|\right)=\mathbf{card}\left(|1, i_0| \right) .
\end{eqnarray*}
Hence, $n- i_0 = i_0$, whence $n = 2 i_0$ is an even number,
against our assumption.
\end{proof}

\bigskip

\begin{lemma}
\label{lemma6}

If a map has a cycle of odd period $n, n>1$, then the map has cycles of any odd period greater than $n$, and cycles of any even period.

\end{lemma}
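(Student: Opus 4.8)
The plan is to feed the given odd cycle $B=\{\beta_1<\beta_2<\cdots<\beta_n\}$ (so $n\ge 3$) into the combinatorial machinery already developed. Attach to $B$ its cyclic permutation $\pi$; by Lemma \ref{sublemma_6} we have $\pi\in\mathfrak{A}$, so Lemma \ref{lemma4} applies. Read through the dictionary (\ref{ftopi}) between $A_\pi$ and $f$: Part 1 of Lemma \ref{lemma4} yields a ``Stefan loop'' for $f$, namely, writing $L_j=\langle\beta_{i_j},\beta_{i_j+1}\rangle$ for the pairwise distinct indices $i_0,i_1,\ldots,i_r$ with $r\le n-2$, we get $f(L_0)\supset L_0$ (a self-loop on the hub $L_0$) together with $L_0\to L_r\to L_{r-1}\to\cdots\to L_1\to L_0$; and Part 2, read through (\ref{ftopi}) applied twice, yields intervals $K_1=[\beta_{i_1},\beta_{i_2}]$, $K_2=[\beta_{i_2},\beta_{i_3}]$ (with $i_1<i_2<i_3$) such that $f^2(K_1)\supset K_1\cup K_2$ and $f^2(K_2)\supset K_1\cup K_2$; that is, $g:=f^2$ has a full two-interval horseshoe on $K_1,K_2$.

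For an odd period $m>n$ I would run the loop
$$L_1\to\underbrace{L_0\to L_0\to\cdots\to L_0}_{m-r\ \text{copies}}\to L_r\to L_{r-1}\to\cdots\to L_2\to L_1$$
of length $m$ (when $r=1$ this degenerates to $L_1$ followed by $m-1$ copies of $L_0$, closed up by $L_0\to L_1$). Part 1 of the Itinerary Lemma (Lemma \ref{Itin1}) gives a $\gamma$ with $f^m(\gamma)=\gamma$ following it. Part 2 of that Lemma then yields that $m$ is the least period: every endpoint appearing belongs to $B$; the intervals $L_1,\ldots,L_r$ lie between consecutive points of $B$ and carry distinct indices, hence have pairwise disjoint interiors, so $\mathrm{Int}(L_1)$ (the position-$0$ interval) is disjoint from every other interval of the loop (all of which are among $L_0,L_2,\ldots,L_r$); and $B$ does not follow the loop, because the only points of $B$ that can lie in $L_0$ are $\beta_{i_0},\beta_{i_0+1}$, and since $\pi(i_0)>i_0$ while $\pi(i_0+1)<i_0$ (both shown in the proof of Lemma \ref{lemma3}) the orbit of $B$ stays inside $L_0$ for at most two consecutive iterates, whereas the loop forces a follower to remain in $L_0$ at all positions $1,\ldots,m-r$, and $m-r\ge(n+2)-(n-2)=4$. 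This produces cycles of every odd period greater than $n$.

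For the even periods I would first note that, $n$ being odd and $\ge 3$, it is not a power of $2$, so Corollary \ref{cor2} already supplies cycles of period $2^i$ for every $i\ge 0$ (and Lemma \ref{lemma2} re-proves period $2$ directly). For a remaining even period $m=2^s q$ with $q\ge 3$ odd and $s\ge 1$, I would work inside $g=f^2$: the horseshoe on $K_1,K_2$ is exactly (indeed a strengthening of) the hypothesis of Theorem \ref{chaos}, so that theorem's argument, applied verbatim to $g$, gives cycles of $g$ of every period. The bridge back to $f$ is the elementary remark that a cycle of $g=f^2$ of period $\ell$ is a cycle of $f$ of period $2\ell$ when $\ell$ is even, and of period $2\ell$ or $\ell$ when $\ell$ is odd; taking $\ell=m/2$ settles all $m$ divisible by $4$ at once, while for $m=2q$ ($q$ odd) one chooses the $g$-loop so that the point obtained has $f$-period exactly $2q$, using the already-constructed odd cycles together with Corollary \ref{cor2} and Lemma \ref{lemma2} to clear away the small borderline cases.

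The step I expect to be the genuine obstacle is the least-period bookkeeping in the even case. In the odd case the Stefan intervals sit between consecutive points of $B$, so ``$B$ does not follow'' is essentially automatic; but $K_1,K_2$ from Part 2 of Lemma \ref{lemma4} need not be elementary intervals of the cycle, so a priori $B$ could track a short loop through them, and one must forbid this — either by keeping the period parameter large enough that a pigeonhole on the $n$-orbit rules it out, or by refining $K_1,K_2$ (e.g. via Lemma \ref{it_lemma}) — while simultaneously keeping precise track of the two-to-one relation between $f$-periods and $f^2$-periods so as to land on the exact target period rather than its double.
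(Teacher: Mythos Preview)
Your plan is essentially the paper's: Part~1 of Lemma~\ref{lemma4} plus the Itinerary Lemma for periods larger than $n$, and Part~2 of Lemma~\ref{lemma4} (the $f^2$-horseshoe on two abutting intervals with endpoints in $B$) for the small even periods. Two minor differences are worth noting. First, you start the Stefan loop at $L_1$ rather than at $L_0$; this is actually the cleaner choice, since then the position-$0$ interval meets none of the later ones and Part~2 of Lemma~\ref{Itin1} applies verbatim, whereas the paper starts at $J_{i_0}$ and must argue around the repeated copies of $J_{i_0}$. Second, the paper does not split off the powers of $2$ via Corollary~\ref{cor2}; it covers \emph{all} periods greater than $n$ (even ones included) with the first loop, and all even periods less than $n$ with the horseshoe.

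The obstacle you flag---that an $f^2$-cycle of least period $\ell$ might descend to an $f$-cycle of period $\ell$ rather than $2\ell$ when $\ell$ is odd---is precisely the point the paper glosses over. The paper restricts to $2k<n$, observes that the endpoints of $I_1,I_2$ lie on the $n$-orbit $B$ so the itinerary point is interior, deduces $f^2$-least-period $k$ from $\mathrm{Int}(I_1)\cap I_2=\emptyset$, and then simply asserts $f$-least-period $2k$ without further comment. So your instinct is right that this is where the real care is needed; the paper does not supply the extra argument you anticipate.
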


\begin{proof}
First of all, 
note that, by Lemma \ref{sublemma_6}, a cycle of an odd period $n,n>1$ is a $\mathfrak{A}$-permutation. 

Let us assume that the continuous function $f$ has a cycle of odd period $n$, 
\begin{equation}
\label{cycleB}
B =  \{\beta_1, \beta_2, \ldots, \beta_n\} , 
\end{equation}
with the $\beta_i$ ordered in increasing sequence,
$\beta_1 < \beta_2 < \cdots < \beta_n$. Let us denote 
$f(\beta_i) = \beta_{s_i}$, for $i = 1, \ldots, n$,
and let us consider the associated cyclic $n$-permutation 
\begin{eqnarray*}
\pi =
\begin{bmatrix}
1 & 2 & 3 & \ldots & m \\
s_1 & s_2 & s_3 & \ldots & s_m
\end{bmatrix}.
\end{eqnarray*}
Earlier in this section, we defined the 
related operator $A_\pi$, acting on segments $| i_1, i_2| \subset |1, n|$, 
and established an important connection  between $A_\pi$ and the
given $n$-cycle (\ref{cycleB}). 
This connection was shown in equation  (\ref{ftopi}), which states:
\begin{eqnarray*}
\text{\; if\;\;} A_{\pi}|i_1, i_2|=|i'_1, i'_2| \text{\; then\;\;} f([\beta_{i_1}, \beta_{i_2}]) \supset [\beta_{i'_1}, \beta_{i'_2}].
\end{eqnarray*}
As shown in Lemma \ref{lemma4},
the permutation $\pi$ has the following associated graph,
with $r+1$ vertices, where $r \leq n-2$:
\begin {center}
\begin {tikzpicture}[-latex ,auto ,node distance =2 cm and 3cm ,on grid ,
thick ,
state/.style ={ circle ,top color =white , bottom color = processblue!20 ,
draw,processblue , text=black , minimum width =0.4 cm}]
\node[] (A)
{$|i_0,*|$};
\node[] (B) [below right=1.6cm and 2.2cm of A] {$|i_r,*|$};
\node[] (C) [below right=4.4cm and 1.2cm of A] {$|i_{r-1},*|$};
\node[] (D) [below left=4.4cm and 1.2cm of A] {$|i_2,*|$};
\node[] (E) [below left=1.6cm and 2.2cm of A] {$|i_1,*|$};
\path [dashed] (C) edge [bend right = 0] node[right]  {} (D);
\path (A) edge [bend left =0] node[below =0.15 cm] {} (B);
\path (B) edge [bend left =0] node[below =0.15 cm] {} (C);
\path (D) edge [bend left =0] node[below =0.15 cm] {} (E);
\path (E) edge [bend left =0] node[below =0.15 cm] {} (A);
\path (A) edge [loop above =45] node[above] {} (A);
\end{tikzpicture}
\end{center}

If we denote $J_i = [\beta_i, \beta_{i+1}]$, $i = 1, \ldots, n-1$, then the above graph 
translates into the following graph for the intervals $J_i$, and the function $f$:
\begin {center}
\begin {tikzpicture}[-latex ,auto ,node distance =2 cm and 3cm ,on grid ,
thick ,
state/.style ={ circle ,top color =white , bottom color = processblue!20 ,
draw,processblue , text=black , minimum width =0.4 cm}]
\node[] (A)
{$J_{i_0}$};
\node[] (B) [below right=1.6cm and 2.2cm of A] {$J_{i_r}$};
\node[] (C) [below right=4.4cm and 1.2cm of A] {$J_{i_{r-1}}$};
\node[] (D) [below left=4.4cm and 1.2cm of A] {$J_{i_2}$};
\node[] (E) [below left=1.6cm and 2.2cm of A] {$J_{i_1}$};
\path [dashed] (C) edge [bend right = 0] node[right]  {} (D);
\path (A) edge [bend left =0] node[below =0.15 cm] {} (B);
\path (B) edge [bend left =0] node[below =0.15 cm] {} (C);
\path (D) edge [bend left =0] node[below =0.15 cm] {} (E);
\path (E) edge [bend left =0] node[below =0.15 cm] {} (A);
\path (A) edge [loop above =45] node[above] {} (A);
\end{tikzpicture}
\end{center}
Let us fix an arbitrary integer $m > n$, 
and let us call $k = m - r$.
Since $m > n > r$, and since both $m$ and $n$ are odd, 
then $k$ is a positive integer $\ge 2$.

Consider the following loop of length $m$:

\begin{equation}
\label{loop}
\underbrace{J_{i_0} \to J_{i_0} \to \cdots \to J_{i_0}}_{k \text{ copies}} \to 
     J_{i_r} \to J_{i_{r-1}} \to  \cdots \to J_{i_1}\to J_{i_0}.
\end{equation}

By Lemma \ref{Itin1} (the Itinerary Lemma), there exists a point 
$\gamma \in J_{i_0}$ that follows this loop, such that $f^m(\gamma) = \gamma$. 

In the eventual case that $n$ divides $m$, we need to make sure that $\gamma$ has indeed
least period $m$. 
But this is guaranteed by the second
part of Lemma \ref{Itin1}. 
Indeed, in the first place, the interior of $J_{i_0}$ is disjoint from all other $J_i$ with 
$i \neq i_0$. 
In the second place, 
the endpoints of all the intervals $J_i$ are part of the orbit $B$,
which does not follow the loop (\ref{loop}). 
Indeed, since $n$ divides $m$, then $m \ge n + 2$, which implies that 
$k \geq 3$. But then, if we call $J_{i_0} = [p, q]$, the first three intervals in this loop
would imply that $p$ can only go to $p$ or $q$, and the same for $q$. But then $B$ would
be either a 1-cycle, or a 2-cycle, against the assumption. 

Thus, $\gamma$ has least period $m$.

We have so far proved that
if $f$ has a cycle of odd period $n > 1$, 
then there are cycles of any period greater $n$, including odd and even integers. 

It remains to show that $f$ has cycles of even periods less than $n$.

First of all, $f$ has a $2$-cycle, by Lemma \ref{lemma2}. 
For even numbers greater than $2$, 
we are going to use the second part of Lemma \ref{lemma4},
which states the existence of integers $i_1, i_2, i_3$, with
$1 \leq i_1 < i_2 < i_3 \leq n$, such that the following graph holds for the operator 
$A^2_\pi$: 

\begin {center}
\begin {tikzpicture}[-latex ,auto ,node distance =2 cm and 3cm ,on grid ,
thick ,
state/.style ={ circle ,top color =white , bottom color = processblue!20 ,
draw,processblue , text=black , minimum width =0.4 cm}]
\node[] (A)
{$|i_1,i_2|$};
\node[] (B) [right=4cm of A] {$|i_2,i_3|$};
\path (A) edge [bend left =35] node[below =0.15 cm] {} (B);
\path (B) edge [bend left =35] node[below =0.15 cm] {} (A);
\path (A) edge [loop left =45] node[left] {} (A);
\path (B) edge [loop right =45] node[right] {} (B);
\end{tikzpicture}
\end{center}

Let us denote $I_1 = [\beta_{i_1}, \beta_{i_2}]$, and 
$I_2 = [\beta_{i_2}, \beta_{i_3}]$. Then the above graph translates into the following one,
for the function $f^2$, and the intervals $I_1$, $I_2$:
\begin {center}
\begin {tikzpicture}[-latex ,auto ,node distance =2 cm and 3cm ,on grid ,
thick ,
state/.style ={ circle ,top color =white , bottom color = processblue!20 ,
draw,processblue , text=black , minimum width =0.4 cm}]
\node[] (A)
{$I_1$};
\node[] (B) [right=4cm of A] {$I_2$};
\path (A) edge [bend left =35] node[below =0.15 cm] {} (B);
\path (B) edge [bend left =35] node[below =0.15 cm] {} (A);
\path (A) edge [loop left =45] node[left] {} (A);
\path (B) edge [loop right =45] node[right] {} (B);
\end{tikzpicture}
\end{center}

If we compare this graph with the one in the proof of Theorem \ref{chaos}, 
we notice that it just has an extra arrow. 
Moreover, since $\beta_{i_1} < \beta_{i_2} < \beta_{i_2}$, the interior of $I_1$
is disjoint from the interval $I_2$.

We now can argue as in the proof of Theorem \ref{chaos} and show the existence of
periodic orbits choosing convenient loops. 

Namely, $f^2$ has a fixed point, by 
Theorem \ref{fixed2} applied to the graph $I_1 \to I_1$, and it has an orbit of period
two, by  Lemma \ref{lemma2}. 
Further, if $2k$ is any even number less than $n$, with $k > 1$, then, applying the Itinerary Lemma 
\ref{Itin1} to the loop
\begin{equation*}
I_1 \to \underbrace{I_2 \to I_2 \to I_2 \to 
\cdots  \to I_2}_{k-1 \text{ copies }} \to I_1,
\end{equation*}
we obtain a point $p \in I_1$ that follows the loop and satisfies $(f^2)^k (p) = p$.
Since the endpoints of $I_1$ and $I_2$ belong to the orbit $B$ of period $n > 2k$, 
then $p$ must be interior to $I_1$, and since the interior of $I_1$ is disjoing from
$I_2$, we conclude that $p$ has least period $k$ for $f^2$, and therefore it has
least period $2k$ for $f$.

\end{proof}


\bigskip

\begin{lemma}
\label{lemma7}

If a map has a cycle of period $2^l(2k+1), k \in \N$, then the map has cycles of periods $2^l(2r+1)$ and $2^{l+1}s$, where $r>k, s \in \N$.
\end{lemma}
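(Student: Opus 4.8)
The plan is to induct on $l$, reducing each step to Lemma \ref{lemma6} by passing to the iterate $g=f^{2^{l}}$ and then converting $g$-periods back into $f$-periods by a short computation with least periods. Throughout I will use the elementary fact that if a point $y$ has $f$-least period $d$ and $g=f^{h}$, then the $g$-least period of $y$ equals $d/\gcd(d,h)$; hence if $y$ lies on a $g$-cycle of period $q$, writing $d=2^{a}b$ with $b$ odd and taking $h=2^{l}$, one obtains $q=2^{\,a-\min(a,l)}b$, so that $b$ is the odd part of $q$ and the power of two dividing $q$ is exactly $2^{\,a-\min(a,l)}$. The base case $l=0$ is nothing but Lemma \ref{lemma6}: the hypothesis is a cycle of odd period $2k+1>1$, and the conclusion asks for cycles of every odd period $2r+1$ with $r>k$ and of every period $2s$ with $s\in\N$, which is exactly what Lemma \ref{lemma6} delivers.

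For the inductive step assume $l\ge 1$ and that the lemma holds for all smaller exponents. Put $g=f^{2^{l}}$. A point of the given cycle has $f$-least period $2^{l}(2k+1)$, hence $g$-least period $2^{l}(2k+1)/\gcd(2^{l}(2k+1),2^{l})=2k+1\ge 3$, so $g$ has a cycle of odd period $2k+1>1$. By Lemma \ref{lemma6} applied to $g$, the map $g$ has cycles of every odd period greater than $2k+1$ and of every even period. Pulling back the even family is clean: if $y$ lies on a $g$-cycle of period $q=2s$, then the power of two in $q$ is at least $2$, so $a-\min(a,l)\ge 1$, which forces $a>l$ and then $d=2^{l+1}s$ exactly. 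Thus $f$ has a cycle of period $2^{l+1}s$ for every $s\in\N$.

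Pulling back the odd family is where the case analysis enters. Let $y$ lie on a $g$-cycle of period $q=2r+1$ with $r>k$; since $q$ is odd we get $a=\min(a,l)\le l$ and $b=2r+1$, so $f$ has a cycle of period $2^{a}(2r+1)$ for some $0\le a\le l$. If $a=l$, we are done. If $a=0$, then $f$ has a cycle of odd period $2r+1>1$, and Lemma \ref{lemma6} supplies cycles of every even period, in particular of the even period $2^{l}(2r+1)$ (even because $l\ge 1$). If $0<a<l$, then $f$ has a cycle of period $2^{a}(2r+1)$ with $2r+1$ odd and $a<l$, so the inductive hypothesis (the present lemma at exponent $a$) yields cycles of period $2^{a+1}t$ for every $t\in\N$; choosing $t=2^{\,l-a-1}(2r+1)$ gives a cycle of period $2^{l}(2r+1)$. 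This closes the induction.

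The step I expect to be the main obstacle — and the reason one cannot simply say ``$g=f^{2^{l}}$ has a $(2r+1)$-cycle, hence $f$ has a $2^{l}(2r+1)$-cycle'' — is precisely this last pull-back: a periodic point of $f^{h}$ may have $f$-least period equal to a proper divisor of $h$ times its $f^{h}$-least period, so a priori one only recovers an $f$-cycle of period $2^{a}(2r+1)$ with $a\le l$. The case split above, using one extra application of Lemma \ref{lemma6} when $a=0$ and the induction when $0<a<l$, is what upgrades this to the exact period $2^{l}(2r+1)$. It is worth verifying carefully that every admissible value of $a$ is covered, that $2^{l}(2r+1)$ is genuinely even in the sub-case $a=0$, and that the even family indeed needs no such repair because there the valuation computation already pins $d$ down exactly.
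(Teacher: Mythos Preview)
Your argument is correct and, in fact, more careful than the paper's. The paper's proof is a two-line direct argument: set $g=f^{2^{l}}$, observe that $g$ has a cycle of odd period $2k+1$, invoke Lemma~\ref{lemma6} to get $g$-cycles of every period $2r+1$ with $r>k$ and every even period $2s$, and then simply assert that $f$ therefore has cycles of periods $2^{l}(2r+1)$ and $2^{l+1}s$. This last step leans on the ``observation'' stated earlier (in the proof of Corollary~\ref{cor1}) that a $g$-cycle of period $q$ with $g=f^{h}$ yields an $f$-cycle of period $hq$ --- a claim the paper never justifies and which, as you correctly point out, is false in general: the $f$-least period of a point on such a $g$-cycle is only $q\cdot e$ for some divisor $e$ of $h$.

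So the essential difference is this: both proofs begin with the same reduction to $g=f^{2^{l}}$ and Lemma~\ref{lemma6}, but the paper treats the pull-back as automatic, whereas you recognise it as the real content and supply an honest argument. Your valuation computation shows the even pull-back genuinely \emph{is} automatic (the $2$-adic valuation of $2s$ forces $a>l$, pinning $d=2^{l+1}s$ exactly), while for the odd pull-back you only get $d=2^{a}(2r+1)$ with $a\le l$, and you repair this by induction on $l$ together with one extra call to Lemma~\ref{lemma6} in the $a=0$ case. This is the right fix; each case is handled and the induction is well-founded. Your proof is longer than the paper's, but that is because it actually closes a gap the paper leaves open.
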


\begin{proof}

Let $f$ be a map with a cyclic point of period $2^l(2k+1)$, where $k \geq 1$, and 
define the map $g$ as $g=f^{2^l}$. Then $g$ has a cycle of period
\begin{eqnarray*}
\frac{2^l(2k+1)}{2^l}=2k+1.
\end{eqnarray*}
Note that $2k+1$ is an odd number; 
therefore, according to Lemma \ref{lemma6}, $g$ has cycles of all odd periods greater than $2k+1$. Or, in other words, we can say that $g$ has cycles of periods $2r+1$, where $r \in \N, r>k$. If so, then $f$ has cycles of periods
\begin{eqnarray*}
2^l(2r+1).
\end{eqnarray*}

Also, Lemma \ref{lemma6} shows that $g$ has cycles of any even periods $2s$, with $s \geq 1$. Then, $f$ has cycles of periods
\begin{eqnarray*}
2^l \cdot 2s &= & 2^{l+1}s.
\end{eqnarray*}

This step completes the proof of the Lemma.%
\end{proof}


{\bf Proof of the Sharkovsky theorem.}
If $n$ is an odd number greater than $1$, then Lemma \ref{lemma6} guarantees the first
line in the Sharkovsky ordering, 
\begin{eqnarray*}
3 \triangleright 5 \triangleright 7 \triangleright 9 \triangleright  \cdots 2k+1 \cdots 
\end{eqnarray*}
and, moreover, that any number on the first line will also imply all the numbers
in the rows below.

Next, Lemma \ref{lemma7} implies all subsequent rows, except the last one  ($l \in \N)$:

\begin{eqnarray*}
& 2 \cdot 3 \triangleright 2 \cdot 5 \triangleright 2 \cdot 7 \triangleright 2 \cdot 9 \triangleright \cdots \\
& 2^2 \cdot 3 \triangleright 2^2 \cdot 5 \triangleright 2^2 \cdot 7 \triangleright 2^2 \cdot 9 
           \triangleright \cdots \\
& 2^3 \cdot 3 \triangleright 2^3 \cdot 5 \triangleright 2^3 \cdot 7 \triangleright 2^3 \cdot 9 
                 \triangleright \cdots \\
& \cdots \\
& 2^l \cdot 3 \triangleright 2^l \cdot 5 \triangleright 2^l \cdot 7 \triangleright 2^l \cdot 9 \triangleright \cdots \\
&  \cdots \\
\end{eqnarray*}

Corollary \ref{cor2}, plus Theorem \ref{fixed2},  
guarantee that any number in any row of the Sharkovsky ordering,
except the last, implies the last line (the powers of two, including $2^0 = 1$).

Finally, Corollary \ref{cor1}, plus Theorem \ref{fixed2}, guarantee the last row in this ordering:
\begin{eqnarray*}
\cdots \triangleright 2^l \triangleright \cdots \triangleright  2^5 \triangleright 2^4 
\triangleright 2^3 \triangleright 2^2 \triangleright 2 \triangleright 1.
\end{eqnarray*}
This completes the proof of the theorem. \qed

\bibliographystyle{plain}

\end{document}